\newcommand\qarrow[2]{\draw[->,shorten >=2pt,shorten <=2pt] (#1) -- (#2) [thick];} 
\numberwithin{equation}{section}
\numberwithin{figure}{section}
\numberwithin{table}{section}
\newtheorem{theorem}{Theorem}[section]
\newtheorem{lemma}[theorem]{Lemma}
\newtheorem{proposition}[theorem]{Proposition}
\theoremstyle{definition}
\newtheorem{example}[theorem]{Example}
\newtheorem{remark}[theorem]{Remark}
\newcommand{\Z}{{\mathbb Z}}
\newcommand{\R}{{\mathbb R}}
\newcommand{\C}{{\mathbb C}}
\newcommand{\bb}{{\mathfrak b}}
\newcommand{\e}{{\mathrm e}}
\def\ve{{\varepsilon}}
\newcommand\trop{{\mathrm{trop}}}
\newcommand\rY{{\mathsf{Y}}}
\newcommand\Ad{{\mathrm{Ad}}}
\newcommand\bY{Y}
\begin{document}

\title[Tetrahedron equation and quantum cluster algebras]
{Tetrahedron equation and quantum cluster algebras}

\author[Rei Inoue]{Rei Inoue}
\address{Rei Inoue, Department of Mathematics and Informatics,
   Faculty of Science, Chiba University,
   Chiba, 263-8522, Japan}
\email{reiiy@math.s.chiba-u.ac.jp}

\author[Atsuo Kuniba]{Atsuo Kuniba}
\address{Atsuo Kuniba, Institute of Physics, Graduate School
of Arts and Sciences, University of Tokyo, Komaba, Tokyo, 153-8902, Japan}
\email{atsuo.s.kuniba@gmail.com}

\author[Yuji Terashima]{Yuji Terashima}
\address{Yuji Terashima, Graduate School of Science, Tohoku University,
6-3, Aoba, Aramaki-aza, Aoba-ku, Sendai, 980-8578, Japan}
\email{yujiterashima@tohoku.ac.jp}

\date{October 21, 2023, revised on January 11, 2024}

\begin{abstract}
We develop the quantum cluster algebra approach recently introduced by Sun and Yagi 
to investigate the tetrahedron equation, a three-dimensional generalization of the Yang-Baxter equation.
In the case of square quiver, 
we devise a new realization of quantum Y-variables in terms $q$-Weyl algebras and  
obtain a solution that possesses three spectral parameters. 
It is expressed in various forms, comprising four products of quantum dilogarithms 
depending on the signs in decomposing the quantum mutations 
into the automorphism part and the monomial part.
For a specific choice of them, our formula precisely reproduces 
Sergeev's $R$ matrix, which corresponds to a vertex formulation of the 
Zamolodchikov-Bazhanov-Baxter model when $q$ is specialized to a root of unity.
\end{abstract}

\maketitle

\section{Introduction}

The tetrahedron equation, originally proposed in \cite{Z80} as 
a key to integrability in three dimensions, 
stands as a remarkable generalization of the Yang-Baxter equation \cite{B82}.
A fundamental form of the equation in the so-called vertex formulation is given by
\begin{align}\label{TE}
R_{124}R_{135}R_{236}R_{456} = 
R_{456}R_{236}R_{135} R_{124},
\end{align}
where $R \in \mathrm{End}(V^{\otimes 3}$) for some vector space $V$, and the 
indices specify the tensor components of $V^{\otimes 6}$ on which it acts non-trivially.
Several interesting solutions have been obtained up to now. See for example 
\cite{Z81,B82,BB92, KMS93,KV94, SMS96, BS06, BMS08, BV15, KMY23}, ordered chronologically, 
and the references therein.

A quick way to see the origin of (\ref{TE}) is to postulate the Yang-Baxter equation {\em up to conjugation}:

\vspace*{-8cm}
\begin{figure}[H]
\hspace*{-1.1cm}
\includegraphics[clip,scale=0.6]{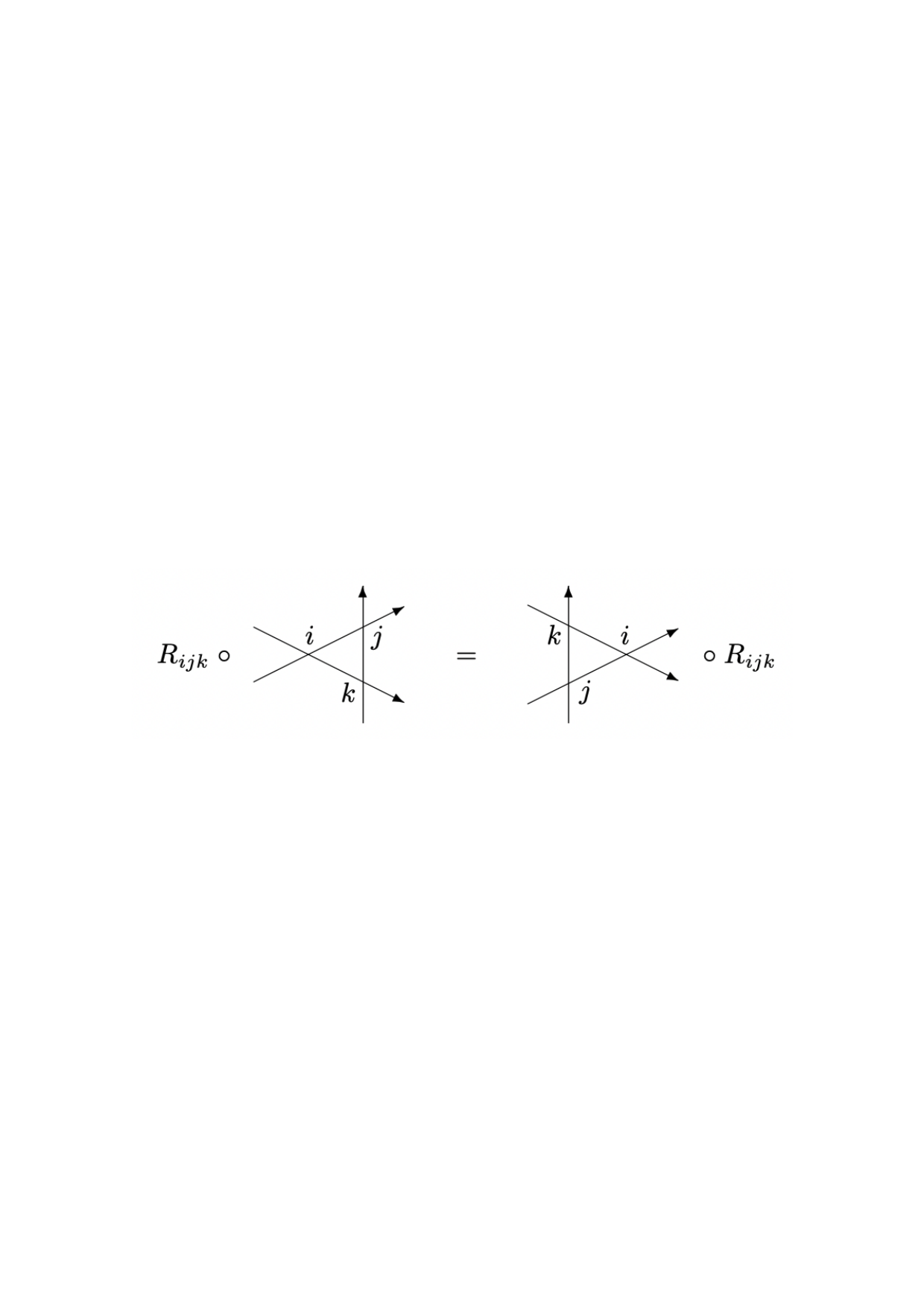}
\vspace*{-8.2cm}
\end{figure}

Here the conventional diagram representing the Yang-Baxter equation is regarded as an operator.  
As $R$ here plays the role of the structure constant, it has to satisfy the compatibility condition 
under introducing one more arrow\footnote{The convention here 
differs from the one which we will employ in Figure \ref{fig:rikisaku}.}:

\vspace*{-3.8cm}
\begin{figure}[H]
\hspace*{-1.1cm}
\includegraphics[clip,scale=0.6]{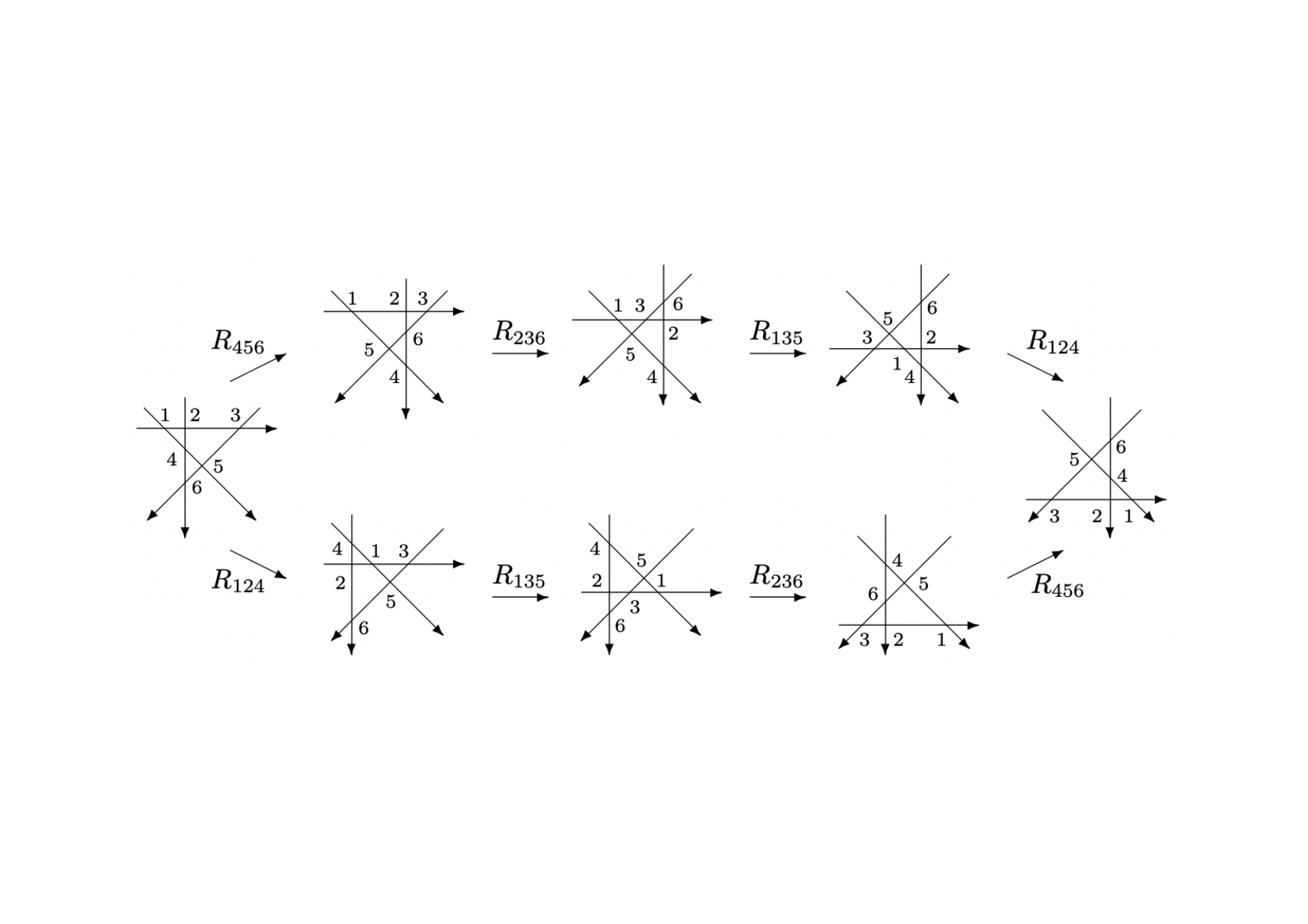}
\vspace*{-4.2cm}
\end{figure}

Systematic approaches to the tetrahedron equation have been made in quantum group theory, which 
utilizes the quantized coordinate rings as demonstrated in \cite{KV94}, 
or the PBW basis of $U_q^+$ as discussed in \cite{S08}, both specific to type A.
They are known to be equivalent beyond type A \cite{KOY13} 
and have been extensively developed, yielding numerous applications \cite{K22}.
In this context, the line configurations shown above are regarded as  {\em wiring diagrams} 
for the reduced expressions of the longest element in the Weyl group of $sl_4$.

The objective of this and the companion paper \cite{IKT23}  
is to develop another approach introduced in \cite{SY22},
where the wiring diagrams are accompanied with 
 {\em quivers} on which the {\em quantum cluster algebra} \cite{FG06} operates efficiently.
 
Cluster algebra has gained prominence as a ubiquitous structure in many branches 
of mathematics and theoretical physics (cf.~\cite{DGKY13}) in recent years.
A distinctive feature of quantum cluster algebras is the {\em mutation} of Y-variables, 
which can be decomposed into the monomial part and the automorphism (dilogarithm) part
in {\em two} ways (cf.~(\ref{mud})) depending on the choice of the {\em sign}. 
A key outcome of the theory is that 
the presence of a mutation loop within quivers inherently enforces the non-trivial identity for their compositions.

 We consider the {\em square quivers} \cite{SY22} associated to the wiring diagrams
 and devise a new realization of the Y-variables in terms of $q$-Weyl algebras.
 See Figure \ref{fig:para}  and  (\ref{Yw})--(\ref{Ypw}).
 It leads to a solution of the tetrahedron equation which is expressed as a product 
 of four quantum dilogarithms $\Psi_q$ and the monomial part.
While the solution remains unique within this framework, 
a variety of formulas can be derived depending on the selection of the four signs mentioned earlier.
For the  specific choice of $-,-,+,+$, it takes the form
\begin{align*}
R(\lambda_i,\lambda_j, \lambda_k)_{ijk} &= \Psi_q(\e^{w_k-w_j+\lambda_k})^{-1}
\Psi_q(\e^{u_k-w_i})^{-1}
\e^{\tfrac{1}{2\hbar}(u_i-w_i)(w_j-w_k)}
\\
&\qquad \times \rho_{jk}
\e^{\tfrac{\lambda_j-\lambda_k}{2\hbar}(u_k-w_i)}
 \Psi_q(\e^{w_i-u_k+\lambda_1-\lambda_k})
\Psi_q(\e^{w_j-w_k+\lambda_2}),
\end{align*}
where $\lambda_i$' are spectral parameters assigned to the crossings on the wiring diagram.
See (\ref{RL1}) and (\ref{pijk}) for the detail.
Our main finding is that it precisely reproduces the well-established R-matrix in \cite{S99}.
It is remarkable that the solution, which was originally constructed at the end of the last century,
has its origin in the theory of quantum cluster algebras.
In contrast to the result \cite[eq.(91)]{SY22}
where $R$ is considered as a transformation involving all the nine relevant Y-variables, 
our ``economical" realization using the $q$-Weyl algebra provides 
a suitable framework for the three-dimensional vertex model which turned out to fit  \cite{S99}.

The paper is organized as follows.
In Section \ref{s:qca}  we recall the basic ingredients from the quantum cluster algebras
following \cite{FG06}.
In Section \ref{s:ct}  we explain the approach by \cite{SY22} along our main target of the square quiver in this paper.
It leads to the cluster transformation $\widehat{R}$ satisfying the tetrahedron equation.
In Section \ref{s:qw} we present a novel realization of the Y-variables using $q$-Weyl algebras.
It enables us to express $\widehat{R}$ totally as an adjoint as
$\widehat{R}= \mathrm{Ad}(R(\lambda_1, \lambda_2, \lambda_3))$.
It is this $R(\lambda_1, \lambda_2, \lambda_3)$ that can be identified with \cite{S99}
and involves the spectral parameters $\lambda_1, \lambda_2$ and $\lambda_3$.
Extracting it out of $\widehat{R}$ is a fundamental step advancing \cite{SY22}, 
and requires realizing the monomial part also as an adjoint action via 
the Baker-Campbell-Hausdorff (BCH) formula as in (\ref{Pbch1}). 
Under some assumption (cf.~(\ref{XL})), it turns out to be possible, 
not always although, but specifically for the sign choices listed in Table \ref{tab:adt}.
In Section \ref{s:me} we present matrix elements of $R(\lambda_1, \lambda_2, \lambda_3)$ 
for an infinite dimensional representation and in the modular double setting.
Subsection \ref{ss:md}  is a review of a part of \cite[Chap.3]{S10}, to which 
we emphasize that the originality of the result belongs.
Section \ref{s:con} is the conclusion.
Appendix \ref{app:RY} contains a list of formulas for $\widehat{R}$ for general signs. 
Appendix \ref{ap:sup} is a supplement to Section \ref{ss:ms}.
Appendix \ref{app:ruw} presents the formulas for the monomial part 
$\tau^{uw}_{\ve_1,\ve_2, \ve_3, \ve_4}$ 
of the mutation with respect to the canonical variables and 
$R(\lambda_1, \lambda_2, \lambda_3)$
corresponding to the signs selected in Table \ref{tab:adt}.
Appendix \ref{ap:nc}  contains  integral formulas for the non-compact quantum dilogarithm used in Section \ref{s:me}.

In this paper we focus on the square quiver.
However, the approach in \cite{SY22} is applicable in principle to other quivers as well.
In fact, it can be generalized to cope with the three-dimensional reflection equation \cite{IK97, K22}.
Further results in such a direction along the 
so-called Fock-Goncharov quiver will be reported 
in our companion paper \cite{IKT23}\footnote{Some of the conventions used in \cite{IKT23} differs 
from the one employed in this paper.}. 

\section{Quantum cluster algebra}\label{s:qca}

\subsection{Mutation}\label{subsec:mutation}

We recall the definition of quantum cluster mutation by \cite{FG06}.
For a finite set $I$,  set
 $B = (b_{ij})_{i,j \in I}$ with $b_{ij} = -b_{ji} \in \Z$.
 We call $B$ the {\it exchange matrix}\footnote{ In this paper we will only encounter skew-symmetric exchange matrices.
 For a more general case of skew-symmetrizable case, see \cite{IKT23}.}.
An exchange matrix will be depicted in terms of a {\em quiver}.
It is an oriented 
graph with vertices labeled with $I$ and $b_{ij}$-fold arrows from $i$ to $j$ when $b_{ij}>0$.

Let $\mathcal{Y}(B)$ be a skew field generated by $q$-commuting variables 
$\bY = (Y_i)_{i \in I}$ with the relations 
\begin{align}\label{eq:q-Y}
  Y_i Y_j = q^{2b_{ij}} Y_j Y_i.  
\end{align}
We call the data $(B, \bY)$ a quantum $y$-seed. 
The parameter $q$ is assumed to be generic throughout.
For $(B, \bY)$ and $k \in I$, 
the mutation $\mu_k$ transforms $(B, \bY)$ to 
$(B', \bY') := \mu_k (B,\bY)$ as 
\begin{align}\label{eq:q-mutation}
  &b_{ij}' = 
  \begin{cases}
    -b_{ij} & i=k \text{ or } j=k,
    \\
    \displaystyle{b_{ij} + \frac{|b_{ik}| b_{kj} + b_{ik} |b_{kj}|}{2}}
    & \text{otherwise},
  \end{cases}
  \\ \label{eq:d-mutation}
  &Y_{i}' = 
  \begin{cases}
  Y_k^{-1} & i=k,
  \\
  \displaystyle{Y_i \prod_{j=1}^{|b_{ik}|}(1 + q^{2j-1} Y_k^{-\mathrm{sgn}(b_{ik})})^{-\mathrm{sgn}(b_{ik})}} & i \neq k.
  \end{cases}
\end{align}
The mutations are involutive, $\mu_k \mu_k = \mathrm{id.}$, 
and commutative, $\mu_k \mu_j = \mu_j \mu_k$ if $b_{jk}=b_{kj}=0$.
The mutation $\mu_k$ induces an isomorphism of skew fields 
$\mu_k^{\ast}: \mathcal{Y}(B') \to \mathcal{Y}(B)$, 
where $\mathcal{Y}(B')$ is a skew field generated by the variables $Y'=(Y'_i)_{i\in I}$ 
with the relations $Y'_i Y'_j = q^{2b'_{ij}} Y'_j Y'_i$. 

The map $\mu_k^{\ast}$ is decomposed into two parts,  
a monomial part and an automorphism part \cite{FG09}, in two ways \cite{Ke11}. 
To describe it we introduce an
isomorphism $\tau_{k,\varepsilon}$ of the skew fields for $\varepsilon \in \{+,-\}$ by 
\begin{align}\label{eq:mono-iso}
\tau_{k,\ve} : ~\mathcal{Y}(B')  \to \mathcal{Y}(B)
; \quad  Y'_i \mapsto 
\begin{cases} 
  Y_k^{-1} & i= k, 
  \\ 
  q^{-b_{ik}[\ve b_{ik}]_+}Y_i Y_k^{[\ve b_{ik}]_+} & i \neq k,
\end{cases}
\end{align} 
where $[a]_+ := \max[0,a]$.
The adjoint action $\mathrm{Ad}_{k,\ve}$ on $\mathcal{Y}(B)$ is defined by 
\begin{align}
&\mathrm{Ad}_{k,+} := \mathrm{Ad}(\Psi_{q}(Y_k)) , \quad 
\mathrm{Ad}_{k,-} := \mathrm{Ad}(\Psi_{q}(Y_k^{-1})^{-1})
\end{align}
with $\mathrm{Ad}(Y)(X) = YXY^{-1}$.
Here $\Psi_q(Y)$ denotes the quantum dilogarithm
\begin{align}\label{Psiq}
\Psi_q(Y) = \frac{1}{(-qY; q^2)_\infty}, \quad (z;q)_\infty = \prod_{n=0}^\infty (1-zq^n).
\end{align}
They have the expansions
\begin{align}\label{expa}
\Psi_q(Y) = \sum_{n = 0}^\infty \frac{(-qY)^n}{(q^2;q^2)_n},
\qquad 
\Psi_q(Y)^{-1} = \sum_{n = 0}^\infty \frac{q^{n^2} Y^n}{(q^2;q^2)_n},
\end{align}
where  $(z;q^2)_n = (z;q^2)_\infty/(zq^{2n};q^2)_\infty$ for any $n$. 
The fundamental properties of the quantum dilogarithm are
\begin{align}
\label{Prec}
&\Psi_q(q^2 U) \Psi_q(U)^{-1} = 1+qU,
\\
\label{penta}
&\Psi_q(U)\Psi_q(W) = \Psi_q(W) \Psi_q(q^{-1}UW) \Psi_q(U) ~~\text{ if } UW = q^2WU,
\end{align}
where the second one is called the pentagon identity.

Now the decomposition of $\mu^\ast_k$  in two ways mentioned in the above 
is given as
\begin{align}\label{mud}
\mu_k^{\ast} = \mathrm{Ad}_{k,+} \circ \tau_{k,+} 
= \mathrm{Ad}_{k,-} \circ \tau_{k,-}.
\end{align} 
Namely, the following diagram is commutative for both choices $\ve=+, -$:
\begin{align*}
\begin{picture}(100,65)(0,37)
\put(0,80){$\mathcal{Y}(B')$}
\put(37,90){ $\mu_k^{\ast}$} \put(30,83){\vector(1,0){35}} 
\put(70,80){$\mathcal{Y}(B)$}
\put(25,73){\vector(3,-2){38}}\put(33,51){$\tau_{k,\ve}$} 
\put(80,52){\vector(0,1){20}}\put(83,60){$\mathrm{Ad}_{k,\ve}$}
\put(70,40){$\mathcal{Y}(B)$} 
\end{picture} 
\end{align*}

\begin{example}
Let $I = \{1,2\}$ and the 2 by 2 exchange matrix be given by 
$B = \begin{pmatrix}0 & 1 \\ -1 & 0 \end{pmatrix}$ implying $Y_1Y_2 = q^2Y_2Y_1$.
Consider the mutation $\mu_2(B,\bY) = (B', \bY')$, where 
$\bY = (Y_1, Y_2)$ and $\bY' = (Y'_1, Y'_2)$. 
Then $B'=-B$ from (\ref{eq:q-mutation}) 
and $Y'_1 = Y_1(1+qY^{-1}_2)^{-1}$ from (\ref{eq:d-mutation}).
On the other hand, the same result 
is obtained also in the form $Y'_1 \rightarrow  Y_1(1+qY^{-1}_2)^{-1}$
in two ways according to (\ref{mud}) as follows:
\begin{align*}
Y'_1 & \overset{\tau_{2,+}}{\longrightarrow} q^{-1}Y_1Y_2 
\overset{\mathrm{Ad}_{2,+}}{\longrightarrow} q^{-1}\Psi_q(Y_2)Y_1Y_2\Psi_q(Y_2)^{-1}
= q^{-1}Y_1\Psi_q(q^{-2}Y_2)\Psi_q(Y_2)^{-1}Y_2 =  q^{-1}Y_1(1+q^{-1}Y_2)^{-1}Y_2,
\\
Y'_1 & \overset{\tau_{2,-}}{\longrightarrow} Y_1 
\overset{\mathrm{Ad}_{2,-}}{\longrightarrow} \Psi_q(Y_2^{-1})^{-1}Y_1 \Psi_q(Y_2^{-1})
= Y_1\Psi_q(q^2Y_2^{-1})^{-1}\Psi_q(Y_2^{-1}) =Y_1(1+qY^{-1}_2)^{-1}.
\end{align*}
\end{example}

We introduce the quantum torus algebra $\mathcal{T}(B)$ associated to $B$ for later use. 
This is the $\mathbb{Q}(q)$-algebra generated by non-commutative variables 
$\rY^\alpha~(\alpha \in \Z^{I})$ with the relations
\begin{align}\label{qyy}
q^{\langle \alpha,\beta \rangle} \rY^\alpha \rY^\beta 
= \rY^{\alpha + \beta},
\end{align}
where $\langle ~~,~~ \rangle$ is a skew-symmetric form given by 
$\langle \alpha,\beta \rangle = - \langle \beta,\alpha \rangle = - \alpha \cdot B \beta$.
Let $e_i$ be the standard unit vector of $\Z^I$, and write $\rY_i$ 
for $\rY^{e_i}$. Then we have $\rY_i \rY_j = q^{2 b_{ij}} \rY_j \rY_i$.
We identify $\rY_i$ with $Y_i$, which is consistent with \eqref{eq:q-Y}.

Let $\mathcal{FT}(B)$ be the fractional field of $\mathcal{T}(B)$. 
The mutations $\mu_k^{\ast}$ 
and their decompositions naturally 
induce the morphisms for the fractional fields of the quantum torus algebras. 
Especially, the monomial part \eqref{eq:mono-iso} of $\mu_k^{\ast}$ is written as
\begin{align}\label{eq:torus-iso} 
\tau_{k,\ve} : \mathcal{FT}(B') \to \mathcal{FT}(B); ~\; 
\rY'_i \mapsto \begin{cases} 
\rY_k^{-1} & i= k, 
\\ 
\rY^{e_i + e_k [\ve b_{ik}]_+} & i \neq k
\end{cases}
\end{align}
under the identification $\rY_i = Y_i, \rY'_i = Y'_i$.

\subsection{Tropical $y$-variables and tropical sign}

Let $\mathbb{P}(u)=\mathbb{P}_\trop(u_1,u_2,\ldots,u_p) := \{\prod_{i=1}^{p} u_i^{a_i}; ~a_i \in \Z \}$ 
be the tropical semifield of rank $p$, equipped with the addition $\oplus$ and multiplication $\cdot$ as
$$
  \prod_{i=1}^{p} u_i^{a_i} \oplus \prod_{i=1}^{p} u_i^{b_i}
  = 
  \prod_{i=1}^{p} u_i^{\min(a_i,b_i)}, 
  \qquad
  \prod_{i=1}^{p} u_i^{a_i} \cdot \prod_{i=1}^{p} u_i^{b_i}
  =  
  \prod_{i=1}^{p} u_i^{a_i+b_i}.
$$
For $s = \prod_{i \in I} u_i^{a_i} \in \mathbb{P}(u)$, we write $s = u^\alpha$
with $\alpha = (a_i)_{i \in I} \in \Z^{I}$.
We say $s$ is positive if  $\alpha \in (\Z_{\geq 0})^{I}$ and negative if 
$\alpha \in (\Z_{\leq 0})^{I}$.

For a quiver $Q$ with the vertex set $I$, let $\mathbb{P}(u)$ be a tropical semifield of rank $|I|$.
The data of the form $(B,y)$ where $B$ is the exchange matrix of $Q$ and 
$y = (y_i)_{i \in I} \in \mathbb{P}(u)^{I}$ is called a tropical $y$-seed.
For $k \in I$, the mutation\footnote{For simplicity we use the same symbol $\mu_k$ to denote a mutation for 
quantum $y$-seeds $(B,\bY)$ and tropical $y$-seeds $(B,y)$.} 
 $\mu_k (B, y) =: (B', y')$ is given by 
\eqref{eq:q-mutation} and 
\begin{equation}\label{eq:trop-mutation}
y_{i}' = 
  \begin{cases}
  y_k^{-1} & i=k,
  \\
  y_i (1 \oplus y_k^{-\mathrm{sgn}(b_{ik})})^{-b_{ik}} & i \neq k.
  \end{cases}
\end{equation}
For a tropical $y$-variable $y_i' = u^{\alpha'}$, the vector $\alpha' \in \Z^I$ 
is called the {\it $c$-vector} of $y_i'$.
The following theorem states the {\em sign coherence} of the $c$-vectors.

\begin{theorem}[\cite{FZ07,GHKK14}]
\label{thm:sign-coherence}
Let 
$(B',y') = \mu_{i_L} \cdots \mu_{i_2} \mu_{i_1}(B,u)$ be a tropical $y$-seed with 
$y'=(y'_i)_{i \in I}$.
For any sequence $(i_1,\ldots, i_L) \in I^L$, 
each $y'_i \in \mathbb{P}(u)$ is either positive or negative. 
\end{theorem}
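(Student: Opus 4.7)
The plan is to invoke the representation theory of decorated quivers with potentials, which establishes sign-coherence in the skew-symmetric case at hand. Since every exchange matrix in this paper is skew-symmetric, we may pass from $B$ to the quiver $Q = Q(B)$ it encodes, fix a generic non-degenerate potential $W \in \C Q$, and form the Jacobian algebra $J(Q,W)$. Mutation of $(Q,W)$ at a vertex $k$ lifts the combinatorial mutation $\mu_k$ to the categorical level and induces a mutation of decorated representations.

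For a mutation sequence $(i_1,\ldots,i_L)$ and each $i \in I$, the next step is to attach a decorated representation $\mathcal{M}_i$ of the Jacobian algebra of the mutated quiver with potential, built inductively by starting from the negative simples at the final seed and running the mutations of decorated representations backwards along the sequence. From a minimal projective presentation of the underlying module $M_i$ of $\mathcal{M}_i$ one extracts a $g$-vector $g_i \in \Z^I$. The central input, proved by Derksen--Weyman--Zelevinsky through the $E$-invariant and the tropical duality between $C$- and $G$-matrices, asserts that the $c$-vector of $y'_i$ equals $\dim M_i$ or its negative, the overall sign depending only on whether $\mathcal{M}_i$ is an honest representation or a shifted coefficient simple. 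Since honest dimension vectors lie in $(\Z_{\geq 0})^I$, Theorem \ref{thm:sign-coherence} follows at once.

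The main obstacle is the inductive compatibility step: one must verify that mutation of decorated representations at $k$ transforms the $g$-vectors according to \eqref{eq:trop-mutation}, with the tropical sum $\oplus$ exactly encoding the sign of the $c$-vector at the previous seed. This is done through an analysis of the exchange triangles in the associated generalized cluster category and is not visible from the recursion \eqref{eq:trop-mutation} alone, which is why some nontrivial categorical input is unavoidable. In the more general skew-symmetrizable setting, not needed here, the analogous input is supplied instead by the cluster scattering diagram of \cite{GHKK14}: sign-coherence there emerges from the fact that each chamber carries a basis of normal vectors contained in a single orthant of the initial cone, and $c$-vectors are precisely these normals read off by tracking the chamber associated to the mutation sequence.
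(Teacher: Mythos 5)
This statement is not proved in the paper at all: it is imported as a known theorem, conjectured in \cite{FZ07} and established in full generality in \cite{GHKK14} (and earlier, in the skew-symmetric case relevant here, by Derksen--Weyman--Zelevinsky). So there is no ``paper proof'' to compare against; the authors' approach is simply to cite the literature and then use the statement (via Remark \ref{re:sgni} and Theorem \ref{thm:piv}). Your proposal is an outline of exactly those literature proofs --- the quiver-with-potential/decorated-representation route in the skew-symmetric case and the scattering-diagram route of \cite{GHKK14} in general --- so in that sense it is consistent with what the paper relies on, but it is a road map rather than a proof: the two decisive inputs (the $E$-invariant machinery giving the representation-theoretic meaning of the relevant vectors, and the compatibility of categorical mutation with \eqref{eq:trop-mutation}) are named but not supplied, and they are precisely where all the work lies.

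One point in your sketch is garbled and would need repair even as an outline. You extract a $g$-vector $g_i$ from a minimal projective presentation of $M_i$ and then assert that the $c$-vector of $y'_i$ equals $\pm\dim M_i$; these are two different vectors playing two different roles. The $c$-vectors of the tropical $y$-variables are (up to sign) dimension vectors of the indecomposable modules attached to the sequence, while $g$-vectors (read from projective presentations) govern the $x$-variables, and the two are related only through the Nakanishi--Zelevinsky tropical duality between $C$- and $G$-matrices of transposed exchange matrices. As written, the sentence introducing $g_i$ does no work and obscures which sign-coherence statement (row versus column) you are actually deducing. Also, the decorated representations are conventionally built by mutating the negative simples \emph{forward} from the initial seed, not backwards from the final one. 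None of this affects the truth of Theorem \ref{thm:sign-coherence}, but if the intent was to give a self-contained argument, the proposal falls well short; if the intent was to justify the citation, the paper already does that.
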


Based on this theorem, for any tropical $y$-seed $(B',y')$ with $y'=(y'_i)_{i \in I}$ 
obtained from $(B,u)$ by applying mutations, we define the {\it tropical sign} $\ve_i'$ of $y_i'$ to be $+1$ (resp. $-1$) if $y_i'$ is positive (resp. $y_i$ is negative).  We also write $\ve_i'= \pm$ for $\ve_i'=\pm 1$ for simplicity.

\begin{remark}\label{re:sgni}
For the mutation $\mu_k (B,y) = (B',y')$ of a tropical $y$-seed, 
let $c_i, c_i', c_k$ be the $c$-vectors of $y_i, y_i', y_k$, 
and $\ve_k$ be the tropical sign of $y_k$. 
Then the tropical mutation \eqref{eq:trop-mutation} is written in terms of $c$-vectors as 
\begin{align}
  c_i' = 
  \begin{cases}
  -c_k & i=k,
  \\
  c_i + c_k [\ve_k b_{ik}]_+ & i \neq k.
\end{cases}
\end{align}
This coincides with the transformation of quantum torus \eqref{eq:torus-iso} 
on $\Z^I$ (i.e. the power of \eqref{eq:torus-iso}), when $\ve = \ve_k$. 
\end{remark}

\subsection{Sequence of mutations}

Let us describe the quantum Y-variables associated with the sequence  of mutations
 $\mu_{i_l} \mu_{i_{l-1}} \cdots \mu_{i_2} \mu_{i_1}$:
 \begin{align}\label{bys}
(B^{(1)},\bY^{(1)}) \stackrel{\mu_{i_1}}{\longleftrightarrow}
(B^{(2)},\bY^{(2)}) \stackrel{\mu_{i_2}}{\longleftrightarrow}
\cdots
\stackrel{\mu_{i_l}}{\longleftrightarrow} (B^{(l+1)},\bY^{(l+1)}).
\end{align}
For $t=1,\ldots, l+1$, let $\rY^\alpha(t) ~(\alpha \in \Z)$ 
be the generators of the quantum torus $\mathcal{T}(B^{(t)})$ in the sense explained around (\ref{qyy}).
We set $\rY_i(t) = \rY^{e_i}(t)$. Especially for $t=1$, 
we use the shorter notations $\rY^\alpha=\rY^\alpha(1)$ and $\rY_i=\rY_i(1)$.
As in (\ref{eq:torus-iso}), we identify $\rY_i$ with $Y_i = Y^{(1)}_i$ hence 
$\mathcal{Y}(B^{(1)})$ with $\mathcal{FT}(B^{(1)})$.
Then the quantum Y-variables $Y^{(t+1)}=(Y^{(t+1)}_i)_{i\in I}\, (t=0,\ldots, l)$ 
in (\ref{bys}) are expressed as 
\begin{align}\label{eq:ad-tau-decomp}
\begin{split}
Y_i^{(t+1)} 
&= \Ad(\Psi_{q}(\rY_{i_1}(1)^{\delta_1})^{\delta_1}) \tau_{i_1,\delta_1}\cdots \Ad(\Psi_{q}(\rY_{i_{t}}(t)^{\delta_{t}})^{\delta_{t}})  \tau_{i_{t},\delta_{t}}(\rY_i(t+1))
\\
&=\Ad(\Psi_{q}(\rY^{\delta_1 \beta_1})^{\delta_1} \cdots \Psi_{q}(\rY^{\delta_{t} \beta_{t}})^{\delta_{t}}) \tau_{i_1,\delta_1}\cdots \tau_{i_{t},\delta_{t}}(\rY_i(t+1)).
\end{split}
\end{align}
This holds true for any choice of the signs $\delta_1, \ldots, \delta_l \in \{+, -\}$, on which the LHS is independent.
Note that $Y^{(t+1)}_i$ is in general a ``complicated" element in $\mathcal{Y}(B^{(1)})$ 
generated from $(B^{(1)},Y^{(1)})$ by applying 
$\mu_{i_t}\cdots \mu_{i_2}\mu_{i_1}$ 
according to  (\ref{eq:d-mutation}), whereas 
$\rY_i(t+1)$ is just a basis of $\mathcal{T}(B^{(t+1)})$.
The first line of (\ref{eq:ad-tau-decomp}) tells that $Y^{(t+1)}_i$ is also obtained as the image of 
$\rY_i(t+1)$ under the composition
$\mu_{i_1}^\ast \cdots \mu^\ast_{i_{t-1}} \mu^\ast_{i_t}$ which is an isomorphism 
$\mathcal{FT}(B^{(t+1)}) \rightarrow \mathcal{FT}(B^{(1)}) = \mathcal{Y}(B^{(1)})$.
The second line is derived from the first line by pushing $\tau_{i,\delta}$'s to the right. 
Thus $\beta_1 = e_{i_1}$ and in general 
$\beta_r \in \Z^I$ is determined by 
$\rY^{\beta_r} = \tau_{i_1,\delta_1} \cdots \tau_{i_{r-1}, \delta_{r-1}}(\rY_{i_r}(r))$.

\subsection{A useful theorem}

We let transpositions $\sigma_{r,s} \in \mathfrak{S}_I\, (r,s \in I)$ act on
either classical $y$-seeds $(B,y)$ or quantum $y$-seeds $(B,Y)$ as the exchange of the indices $r$ and $s$.
For quantum $y$-seeds it is given by 
\begin{equation}\label{qys}
\bigl((b_{ij})_{i,j\in I} , (Y_i)_{i \in I} \bigl) \mapsto 
\bigl((b_{\sigma_{r,s}(i), \sigma_{r,s}(j)})_{i,j\in I}, (Y_{\sigma_{r,s}(i)})_{i \in I} \bigl),
\end{equation}
where $\sigma_{r,s}(r) = s, \sigma_{r,s}(s)=r$ 
and $\sigma_{r,s}(i) = i$ for $i \neq r,s$.
For classical $y$-seeds, the rule is similar.

Let 
\begin{equation}\label{nuL}
\nu = \nu_L\cdots \nu_1 := \sigma_{r_m,s_m} \cdots 
\mu_{i_l} \cdots \sigma_{r_1,s_1}  \cdots \mu_{i_1} \quad (L = l+m)
\end{equation}
be a composition of $l$ mutations $\mu_{i_1},\ldots, \mu_{i_l}$ 
and $m$ transpositions $\sigma_{r_1,s_1},\ldots, \sigma_{r_m, s_m}$ in an arbitrary order.
(So $\nu_L$ may actually be a mutation for example.)
For simplicity, we also call $\nu$ 
a mutation sequence even though a part of it consists of transpositions.

Consider the tropical $y$-seeds starting from $(B,y)$ 
and the quantum $y$-seeds starting from $(B,Y)$ which are generated along the mutation sequences
$\nu=\nu_L\cdots \nu_1$ and $\nu'= \nu'_{L'} \cdots  \nu'_1$ as follows:
\begin{align}
\label{eq:seq-tropy}
&(B,y) =:(B^{(1)},y^{(1)}) \stackrel{\nu_1}{\longleftrightarrow}
(B^{(2)},y^{(2)}) \stackrel{\nu_2}{\longleftrightarrow}
\cdots
\stackrel{\nu_L}{\longleftrightarrow} (B^{(L+1)},y^{(L+1)}) = \nu(B,y),
\\
\label{eq:seq-Y}
&(B,\bY) =:(B^{(1)},\bY^{(1)}) \stackrel{\nu_1}{\longleftrightarrow}
(B^{(2)},\bY^{(2)}) \stackrel{\nu_2}{\longleftrightarrow}
\cdots
\stackrel{\nu_L}{\longleftrightarrow} (B^{(L+1)},\bY^{(L+1)}) = \nu(B,Y),
\\
\label{eq:seq-tropy2}
&(B,y) =:(B^{(1)\prime },y^{(1)\prime}) \stackrel{\nu'_1}{\longleftrightarrow}
(B^{(2)\prime},y^{(2)\prime}) \stackrel{\nu'_2}{\longleftrightarrow}
\cdots
\stackrel{\nu'_{L'}}{\longleftrightarrow} (B^{(L+1)\prime},y^{(L+1)\prime}) = \nu'(B,y),
\\
\label{eq:seq-Y2}
&(B,\bY) =:(B^{(1)\prime},\bY^{(1)\prime}) \stackrel{\nu'_1}{\longleftrightarrow}
(B^{(2)\prime},\bY^{(2)\prime}) \stackrel{\nu'_2}{\longleftrightarrow}
\cdots
\stackrel{\nu'_{L'}}{\longleftrightarrow} (B^{(L+1)\prime},\bY^{(L+1)\prime}) = \nu'(B,Y).
\displaybreak[0]
\end{align}
The following theorem is obtained by combining the synchronicity \cite{N21} 
among $x$-seeds, $y$-seeds and tropical $y$-seeds, 
and the synchronicity between classical and quantum seeds 
\cite[Lemma 2.22]{FG09b}, \cite[Proposition 3.4]{KN11}.
 
\begin{theorem}\label{thm:piv}
In the situation in (\ref{eq:seq-tropy}) -- (\ref{eq:seq-Y2}),
the following two statements are equivalent:
\begin{itemize}

\item[(1)]
Tropical $y$-seeds satisfy $\nu (B,y) = \nu'(B,y)$. 

\item[(2)]
Quantum $y$-seeds satisfy $\nu (B,\bY) = \nu'(B,\bY)$. 

\end{itemize}
\end{theorem}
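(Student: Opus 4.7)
The plan is to prove both directions simultaneously by threading the classical $y$-seeds between the tropical and quantum ones, and invoking the two synchronicity results cited just before the statement. Concretely, I would establish the chain
\[
\nu(B,\bY) = \nu'(B,\bY) \;\iff\; \nu(B,y^{\mathrm{cl}}) = \nu'(B,y^{\mathrm{cl}}) \;\iff\; \nu(B,y) = \nu'(B,y),
\]
where $y^{\mathrm{cl}}$ denotes the classical $y$-variables in the universal semifield. The outermost equalities are exactly the two statements (1) and (2) in the theorem.

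For the inner equivalence (classical $\Leftrightarrow$ tropical), I would apply Nakanishi's synchronicity \cite{N21}: the classical $y$-variables after a mutation sequence admit a separation formula in which the monomial part is governed by the $c$-vectors (which are precisely the data recorded by the tropical $y$-seed together with $B^{(L+1)}$) and the $F$-polynomial part is determined by the $x$-seed trajectory. The cited result asserts that equality at the tropical level already forces equality of both factors, so the classical $y$-seeds agree; the converse implication is immediate by tropicalization.

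For the outer equivalence (quantum $\Leftrightarrow$ classical), I would appeal to the classical-quantum synchronicity of Fock-Goncharov \cite[Lemma 2.22]{FG09b} and Kashaev-Nakanishi \cite[Proposition 3.4]{KN11}: the quantum $Y$-variables satisfy a parallel separation formula in which the same $c$-vectors control the monomial part (compatible with \eqref{eq:ad-tau-decomp} and Remark \ref{re:sgni}) and the quantum $F$-polynomials specialize to the classical ones at $q\to 1$. Equality of classical $y$-seeds then lifts uniquely to equality of quantum $y$-seeds, and the $q\to 1$ specialization supplies the reverse direction.

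The main obstacle, which I expect to be essentially bookkeeping rather than substantive, is accommodating the transpositions $\sigma_{r_j,s_j}$ that are interleaved with the mutations in $\nu$ and $\nu'$. Since each $\sigma_{r,s}$ acts on both the exchange matrix and the tuple of variables by the simple relabeling \eqref{qys}, it commutes with tropicalization and with the classical-to-quantum passage. One may therefore either pull all transpositions to the outside of each sequence at the cost of relabeling the intermediate mutation indices, reducing the claim to the pure-mutation case directly covered by \cite{N21,FG09b,KN11}, or verify step-by-step that each of the cited statements is stable under such relabelings. Either route introduces no new content beyond the two synchronicity inputs.
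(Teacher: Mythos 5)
Your proposal follows essentially the same route as the paper, which does not give a detailed proof but simply asserts that the theorem "is obtained by combining the synchronicity \cite{N21} among $x$-seeds, $y$-seeds and tropical $y$-seeds, and the synchronicity between classical and quantum seeds \cite[Lemma 2.22]{FG09b}, \cite[Proposition 3.4]{KN11}." Your chain through the classical $y$-seeds, together with the observation that the interleaved transpositions are harmless relabelings, is a correct and somewhat more explicit rendering of exactly that argument.
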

\noindent
It is noteworthy that (1) implies (2), and this fact will be utilized 
in the subsequent arguments.

\section{Cluster transformation $\widehat{R}$}\label{s:ct}

\subsection{Wiring diagram and square quiver}

Let us fix our convention of the wiring diagrams and associated square quivers  
using examples.
See also \cite[Sec.3]{SY22}.
Let $W(A_n)$ be the Weyl group of $A_n$ generated by the simple reflections
$s_1,\ldots, s_n$ obeying the Coxeter relations $s_i^2=1$, 
$s_is_js_i = s_js_is_j \,(|i-j|=1)$ and $s_is_j=s_js_i\, (|i-j|\ge 2)$.
A reduced expression $s_{i_1}\cdots s_{i_l}$ of an element in $W(A_n)$ 
is identified with the (reduced) word $i_1\ldots i_l \in [1,n]^l$.
A wiring diagram is a collection of $n$ wires which are horizontal except the vicinity of crossings.
In the aforementioned context, $i_k$ indicates that the $k$-th crossing from the left takes place at the 
$i_k$-th level, measured from the top.
Crossings are required to occur at distinct horizontal positions, 
although this restriction can be relaxed due to the identification of topologically 
equivalent diagrams which are transformable by $s_is_j=s_js_i, (|i-j|\ge 2)$.
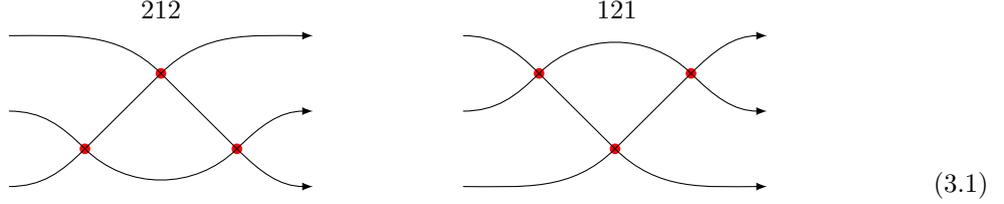
\begin{figure}[H]
\begin{align}
\begin{tikzpicture}
\begin{scope}[>=latex,xshift=0pt]
\draw (2,2.1) circle(0pt) node[above]{212};
\draw (8,2.1) circle(0pt) node[above]{121};
{\color{red}
\fill (1,0.5) circle(2pt) coordinate(A) node[below]{};
\fill (2,1.5) circle(2pt) coordinate(B) node[above]{};
\fill (3,0.5) circle(2pt) coordinate(C) node[below]{};
}
\draw [-] (0,2) to [out = 0, in = 135] (B);
\draw [-] (B) -- (C); 
\draw [->] (C) to [out = -45, in = 180] (4,0);
\draw [-] (0,1) to [out = 0, in = 135] (A); 
\draw [-] (A) to [out = -45, in = -135] (C);
\draw [->] (C) to [out = 45, in = 180] (4,1);
\draw [-] (0,0) to [out = 0, in = -135] (A); 
\draw [-] (A) -- (B);
\draw [->] (B) to [out = 45, in = 180] (4,2);
\coordinate (P1) at (4.5,1);
\coordinate (P2) at (5.5,1);
\end{scope}
\begin{scope}[>=latex,xshift=170pt]
{\color{red}
\fill (3,1.5) circle(2pt) coordinate(A) node[above]{};
\fill (2,0.5) circle(2pt) coordinate(B) node[below]{};
\fill (1,1.5) circle(2pt) coordinate(C) node[above]{};
}
\draw [-] (0,0) to [out = 0, in = -135] (B);
\draw [-] (B) -- (A); 
\draw [->] (A) to [out = 45, in = 180] (4,2);
\draw [-] (0,1) to [out = 0, in = -135] (C); 
\draw [-] (C) to [out = 45, in = 135] (A);
\draw [->] (A) to [out = -45, in = 180] (4,1);
\draw [-] (0,2) to [out = 0, in = 135] (C); 
\draw [-] (C) -- (B);
\draw [->] (B) to [out = -45, in = 180] (4,0);
\end{scope}
\end{tikzpicture}
\end{align}
\caption{Wiring diagrams for 
the reduced words 212 and 121 of the longest element 
$s_2s_1s_2=s_1s_2s_1$ of  $W(A_2)$.}
\end{figure}
\vspace{-0.2cm}
Given a wiring diagram, the associated square quiver 
is constructed by placing vertices on the edges and 
drawing arrows  in such a manner that each crossing in the wiring diagram is enclosed by a square, 
and the arrows on every square are oriented in a clockwise fashion.
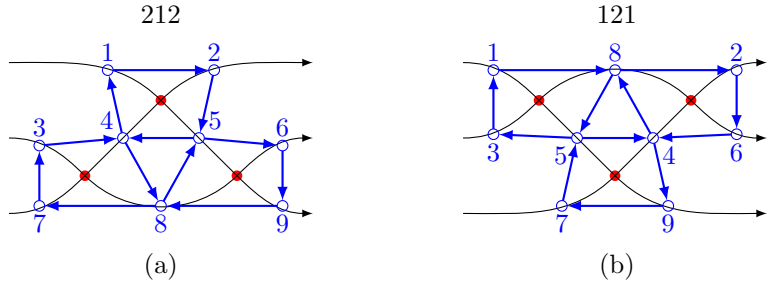
\begin{figure}[H]
\begin{align*}
\begin{tikzpicture}
\begin{scope}[>=latex,xshift=0pt]
\draw (2,2.4) circle(0pt) node[above]{212};
\draw (8,2.4) circle(0pt) node[above]{121};
\draw (2,-1) circle(0pt) node[above]{(a)};
\draw (8,-1) circle(0pt) node[above]{(b)};
{\color{red}
\fill (1,0.5) circle(2pt) coordinate(A) node[below]{};
\fill (2,1.5) circle(2pt) coordinate(B) node[above]{};
\fill (3,0.5) circle(2pt) coordinate(C) node[below]{};
}
\draw [-] (0,2) to [out = 0, in = 135] (B);
\draw [-] (B) -- (C); 
\draw [->] (C) to [out = -45, in = 180] (4,0);
\draw [-] (0,1) to [out = 0, in = 135] (A); 
\draw [-] (A) to [out = -45, in = -135] (C);
\draw [->] (C) to [out = 45, in = 180] (4,1);
\draw [-] (0,0) to [out = 0, in = -135] (A); 
\draw [-] (A) -- (B);
\draw [->] (B) to [out = 45, in = 180] (4,2);
\coordinate (P1) at (4.5,1);
\coordinate (P2) at (5.5,1);
%
%
{\color{blue}
\draw (1.3,1.9) circle(2pt) coordinate(1) node[above]{$1$};
\draw (2.7,1.9) circle(2pt) coordinate(2) node[above]{$2$};
\draw (0.4,0.9) circle(2pt) coordinate(3) node[above]{$3$};
\draw (1.5,1) circle(2pt) coordinate(4) node[above left]{$4$};
\draw (2.5,1) circle(2pt) coordinate(5) node[above right]{$5$};
\draw (3.6,0.9) circle(2pt) coordinate(6) node[above]{$6$};
\draw (0.4,0.1) circle(2pt) coordinate(7) node[below]{$7$};
\draw (2,0.1) circle(2pt) coordinate(8) node[below]{$8$};
\draw (3.6,0.1) circle(2pt) coordinate(9) node[below]{$9$};
\qarrow{1}{2}
\qarrow{2}{5}
\qarrow{5}{4}
\qarrow{4}{1}
\qarrow{4}{8}
\qarrow{8}{7}
\qarrow{7}{3}
\qarrow{3}{4}
\qarrow{5}{6}
\qarrow{6}{9}
\qarrow{9}{8}
\qarrow{8}{5}
}
\end{scope}
\begin{scope}[>=latex,xshift=170pt]
{\color{red}
\fill (3,1.5) circle(2pt) coordinate(A) node[above]{};
\fill (2,0.5) circle(2pt) coordinate(B) node[below]{};
\fill (1,1.5) circle(2pt) coordinate(C) node[above]{};
}
\draw [-] (0,0) to [out = 0, in = -135] (B);
\draw [-] (B) -- (A); 
\draw [->] (A) to [out = 45, in = 180] (4,2);
\draw [-] (0,1) to [out = 0, in = -135] (C); 
\draw [-] (C) to [out = 45, in = 135] (A);
\draw [->] (A) to [out = -45, in = 180] (4,1);
\draw [-] (0,2) to [out = 0, in = 135] (C); 
\draw [-] (C) -- (B);
\draw [->] (B) to [out = -45, in = 180] (4,0);
{\color{blue}
\draw (1.3,0.1) circle(2pt) coordinate(7) node[below]{$7$};
\draw (2.7,0.1) circle(2pt) coordinate(9) node[below]{$9$};
\draw (0.4,1.05) circle(2pt) coordinate(3) node[below]{$3$};
\draw (1.5,1) circle(2pt) coordinate(5) node[below left]{$5$};
\draw (2.5,1) circle(2pt) coordinate(4) node[below right]{$4$};
\draw (3.6,1.05) circle(2pt) coordinate(6) node[below]{$6$};
\draw (0.4,1.9) circle(2pt) coordinate(1) node[above]{$1$};
\draw (2,1.9) circle(2pt) coordinate(8) node[above]{$8$};
\draw (3.6,1.9) circle(2pt) coordinate(2) node[above]{$2$};
\qarrow{1}{8}
\qarrow{8}{5}
\qarrow{5}{3}
\qarrow{3}{1}
\qarrow{4}{8}
\qarrow{8}{2}
\qarrow{2}{6}
\qarrow{6}{4}
\qarrow{5}{4}
\qarrow{4}{9}
\qarrow{9}{7}
\qarrow{7}{5}
}
\end{scope}
\end{tikzpicture}
\end{align*}
\caption{Square quivers (depicted in blue) associated with the wiring diagrams.
Given the labels $1,\ldots, 9$ of the quiver vertices in (a), those in (b) are 
determined following the mutation sequence in Figure \ref{fig:mus}.}
\label{fig:quiver121}
\end{figure}

Let $(B, \bY)$ and $(B', \bY')$ be the quantum $y$-seeds corresponding to the 
square quivers for the reduced words 212 and 121 in Figure \ref{fig:quiver121}, respectively.
From (\ref{eq:q-Y}) the Y-variables 
$\bY=(Y_1,\ldots, Y_9)$ and $\bY'=(Y'_1,\ldots, Y'_9)$ are commutative 
except the following:
\begin{alignat}{4}
Y_1Y_2 &= q^2Y_2Y_1, \quad & Y_2Y_5 &= q^2 Y_5Y_2, \quad & 
Y_5 Y_4 &= q^2 Y_4Y_5,  \quad & Y_4Y_1 &= q^2 Y_1Y_4,
\nonumber\\
Y_3Y_4 &= q^2Y_4Y_3,\quad & Y_4Y_8 &= q^2Y_8Y_4, \quad &
Y_8Y_7 &= q^2Y_7Y_8, \quad & Y_7Y_3 &= q^2 Y_3Y_7,
\nonumber\\
Y_5Y_6 &= q^2Y_6Y_5,\quad & Y_6Y_9 &= q^2Y_9Y_6, \quad &
Y_9Y_8 &= q^2Y_8Y_9, \quad & Y_8Y_5 &= q^2 Y_5Y_8,
\nonumber\\
Y'_1Y'_8 & = q^2 Y'_8Y'_1, \quad & Y'_8Y'_5 &= q^2Y'_5Y'_8, \quad &
Y'_5Y'_3 &= q^2Y'_3Y'_5, \quad &  Y'_3Y'_1 &= q^2Y'_1Y'_3, 
\nonumber\\
Y'_8Y'_2 & = q^2 Y'_2Y'_8, \quad & Y'_2Y'_6 &= q^2Y'_6Y'_2, \quad &
Y'_6Y'_4 &= q^2Y'_4Y'_6, \quad &  Y'_4Y'_8 &= q^2Y'_8Y'_4, 
\nonumber\\
Y'_5Y'_4 &= q^2Y'_4Y'_5, \quad & Y'_4Y'_9 &= q^2Y'_9Y'_4, \quad &
Y'_9Y'_7 &= q^2Y'_7Y'_9, \quad & Y'_7Y'_5 &= q^2Y'_5Y'_7.
\nonumber
\displaybreak[0]
\end{alignat}

\begin{remark}\label{re:c1}
The center of $\mathcal{Y}(B)$ (resp.~$\mathcal{Y}(B')$) is generated by 
$Y_9Y_5Y_1,\; Y_3Y_8Y_6$ and $Y_2Y_4Y_7$
(resp. $Y'_9Y'_5Y'_1,\; Y'_3Y'_8Y'_6$ and $Y'_2Y'_4Y'_7$), which are 
products of Y-variables along the wires.
\end{remark}

\subsection{Cluster transformation $\widehat{R}$ as composition of mutations}

Let $(B^{(1)}, \bY^{(1)}) = (B, \bY)$ and 
$(B^{(6)}, \bY^{(6)}) = (B', \bY')$  be the quantum $y$-seeds corresponding to Figure \ref{fig:quiver121} (a) and (b), respectively.
We connect them by the following mutation sequence
\begin{align}\label{mus}
(B^{(1)}, \bY^{(1)}) \underset{\varepsilon_1}{\overset{\mu_8}{\longleftrightarrow}}
(B^{(2)}, \bY^{(2)}) \underset{\varepsilon_2}{\overset{\mu_5}{\longleftrightarrow}}
(B^{(3)}, \bY^{(3)}) \underset{\varepsilon_3}{\overset{\mu_4}{\longleftrightarrow}}
(B^{(4)}, \bY^{(4)}) \underset{\varepsilon_4}{\overset{\mu_8}{\longleftrightarrow}}
(B^{(5)}, \bY^{(5)}) \overset{\sigma_{45}}{\longleftrightarrow}
(B^{(6)}, \bY^{(6)}),
\end{align}
where $\bY^{(t)}=(Y^{(t)}_1,\ldots, Y^{(t)}_9)$.
The symbol $\sigma_{ij}$ denotes the exchange of the indices $i$ and $j$ in the 
exchange matrix and Y-variables. See (\ref{qys}).
Thus we have $\sigma_{45}(Y^{(6)}_5) = Y^{(5)}_4, \, 
\sigma_{45}(Y^{(6)}_4) = Y^{(5)}_5$ and $\sigma_{45}(Y^{(6)}_i) = Y^{(5)}_i$ for 
$i \neq 4,5$.
As for the exchange matrices, $\sigma_{45}$ is represented as the exchange of labels 4 and 5 in the quivers corresponding to $B^{(5)}$ and $B^{(6)}$. 
We have also attached the signs $\varepsilon_i=\pm 1$ along which the decomposition (\ref{mud})
into the automorphism part and the monomial part will be considered.  
See Figure \ref{fig:mus}.

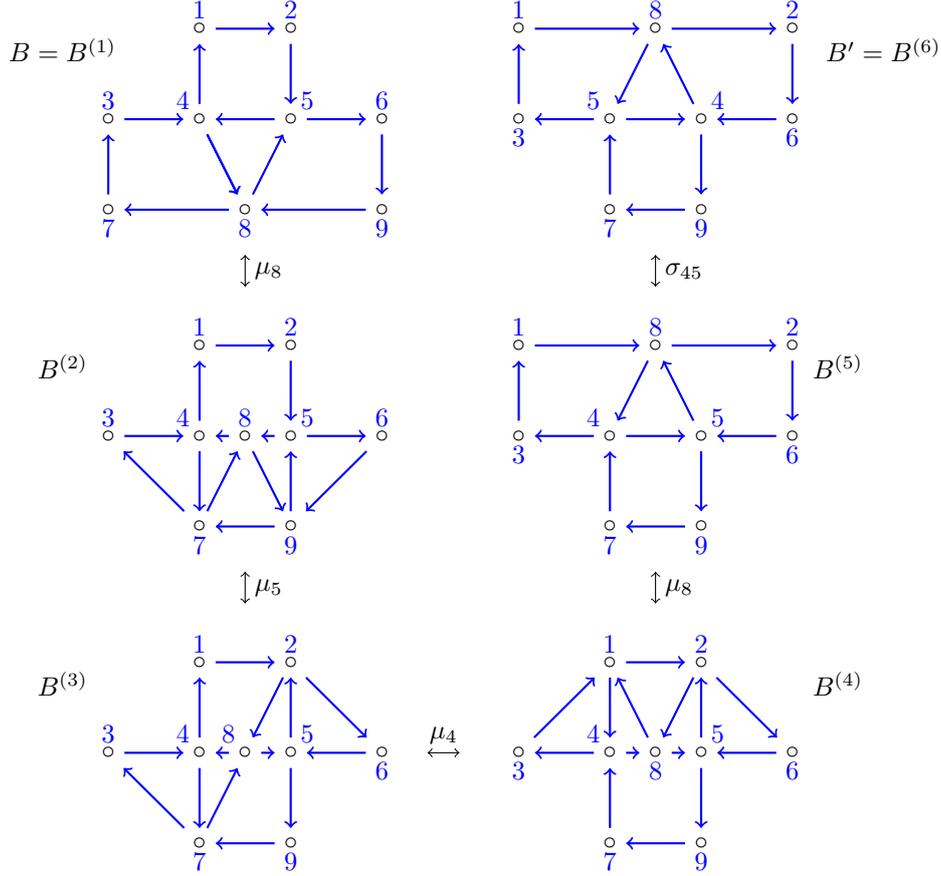
\begin{figure}[H]
\begin{align*}
\begin{tikzpicture}[scale=0.6]
\node (q1[1]) at (2,2+7) {$\circ$};
\node (q2[1]) at (4,2+7) {$\circ$};
\node (q3[1]) at (0,0+7) {$\circ$};
\node (q4[1]) at (2,0+7) {$\circ$};
\node (q5[1]) at (4,0+7) {$\circ$};
\node (q6[1]) at (6,0+7) {$\circ$};
\node (q7[1]) at (0,-2+7) {$\circ$};
\node (q8[1]) at (3,-2+7) {$\circ$};
\node (q9[1]) at (6,-2+7) {$\circ$};
{\color{blue}
\draw [->] (node cs:name=q1[1]) -- (node cs:name=q2[1])  [thick]  node[above] {$2$};
\draw [->] (node cs:name=q2[1]) -- (node cs:name=q5[1])  [thick] node[above right] {$5$};
\draw [->] (node cs:name=q5[1]) -- (node cs:name=q6[1])  [thick] node[above] {$6$};
\draw [->] (node cs:name=q6[1]) -- (node cs:name=q9[1]) [thick] node[below] {$9$};
\draw [->] (node cs:name=q8[1]) -- (node cs:name=q5[1]) [thick] ;
\draw [->] (node cs:name=q3[1]) -- (node cs:name=q4[1]) [thick] ;
\draw [->] (node cs:name=q5[1]) -- (node cs:name=q4[1]) [thick]  node[above  left] {$4$};
\draw [->] (node cs:name=q4[1]) -- (node cs:name=q8[1]) [thick]  node[below] {$8$};
\draw [->] (node cs:name=q4[1]) -- (node cs:name=q8[1]) [thick] ;
\draw [->] (node cs:name=q8[1]) -- (node cs:name=q7[1]) [thick]  node[below] {$7$};
\draw [->] (node cs:name=q9[1]) -- (node cs:name=q8[1]) [thick] ;
\draw [->] (node cs:name=q7[1]) -- (node cs:name=q3[1]) [thick] node[above] {$3$};
\draw [->] (node cs:name=q4[1]) -- (node cs:name=q1[1]) [thick] node[above] {$1$};
}
\node (q1[2]) at (2,2) {$\circ$};
\node (q2[2]) at (4,2) {$\circ$};
\node (q3[2]) at (0,0) {$\circ$};
\node (q4[2]) at (2,0) {$\circ$};
\node (q5[2]) at (4,0) {$\circ$};
\node (q6[2]) at (6,0) {$\circ$};
\node (q7[2]) at (2,-2) {$\circ$};
\node (q8[2]) at (3,0) {$\circ$};
\node (q9[2]) at (4,-2) {$\circ$};
{\color{blue}
\draw [->] (node cs:name=q1[2]) -- (node cs:name=q2[2]) [thick] node[above] {$2$};
\draw [->] (node cs:name=q2[2]) -- (node cs:name=q5[2]) [thick] node[above right] {$5$};
\draw [->] (node cs:name=q5[2]) -- (node cs:name=q6[2]) [thick] node[above] {$6$};
\draw [->] (node cs:name=q6[2]) -- (node cs:name=q9[2]) [thick] node[below] {$9$};
\draw [->] (node cs:name=q9[2]) -- (node cs:name=q5[2]) [thick];
\draw [->] (node cs:name=q5[2]) -- (node cs:name=q8[2]) [thick] node[above] {$8$};
\draw [->] (node cs:name=q8[2]) -- (node cs:name=q9[2]) [thick];
\draw [->] (node cs:name=q9[2]) -- (node cs:name=q7[2]) [thick] node[below] {$7$};
\draw [->] (node cs:name=q7[2]) -- (node cs:name=q8[2]) [thick];
\draw [->] (node cs:name=q8[2]) -- (node cs:name=q4[2])  [thick] node[above left] {$4$};
\draw [->] (node cs:name=q7[2]) -- (node cs:name=q3[2]) [thick] node[above] {$3$};
\draw [->] (node cs:name=q3[2]) -- (node cs:name=q4[2]) [thick];
\draw [->] (node cs:name=q4[2]) -- (node cs:name=q1[2])  [thick]node[above] {$1$};
\draw [->] (node cs:name=q4[2]) -- (node cs:name=q7[2])[thick];
}
\node (q1[3]) at (2,2-7) {$\circ$};
\node (q2[3]) at (4,2-7) {$\circ$};
\node (q3[3]) at (0,0-7) {$\circ$};
\node (q4[3]) at (2,0-7) {$\circ$};
\node (q5[3]) at (4,0-7) {$\circ$};
\node (q6[3]) at (6,0-7) {$\circ$};
\node (q7[3]) at (2,-2-7) {$\circ$};
\node (q8[3]) at (3,0-7) {$\circ$};
\node (q9[3]) at (4,-2-7) {$\circ$};
{\color{blue}
\draw [->] (node cs:name=q1[3]) -- (node cs:name=q2[3])[thick] node[above] {$2$};
\draw [->] (node cs:name=q5[3]) -- (node cs:name=q2[3])[thick];
\draw [->] (node cs:name=q6[3]) -- (node cs:name=q5[3])[thick];
\draw [->] (node cs:name=q2[3]) -- (node cs:name=q6[3])[thick]
node[below] {$6$};
\draw [->] (node cs:name=q5[3]) -- (node cs:name=q9[3])[thick]
node[below] {$9$};
\draw [->] (node cs:name=q8[3]) -- (node cs:name=q5[3]) [thick]node[above right] {$5$};
\draw [->] (node cs:name=q2[3]) -- (node cs:name=q8[3])[thick]
node[above left] {$8$};
\draw [->] (node cs:name=q9[3]) -- (node cs:name=q7[3]) [thick]node[below] {$7$};
\draw [->] (node cs:name=q7[3]) -- (node cs:name=q8[3])[thick];
\draw [->] (node cs:name=q8[3]) -- (node cs:name=q4[3]) [thick]node[above left] {$4$};
\draw [->] (node cs:name=q7[3]) -- (node cs:name=q3[3])[thick] node[above] {$3$};
\draw [->] (node cs:name=q3[3]) -- (node cs:name=q4[3])[thick];
\draw [->] (node cs:name=q4[3]) -- (node cs:name=q1[3]) [thick]node[above] {$1$};
\draw [->] (node cs:name=q4[3]) -- (node cs:name=q7[3])[thick];
}
\node (q1[4]) at (2+9,2-7) {$\circ$};
\node (q2[4]) at (4+9,2-7) {$\circ$};
\node (q3[4]) at (0+9,0-7) {$\circ$};
\node (q4[4]) at (2+9,0-7) {$\circ$};
\node (q5[4]) at (4+9,0-7) {$\circ$};
\node (q6[4]) at (6+9,0-7) {$\circ$};
\node (q7[4]) at (2+9,-2-7) {$\circ$};
\node (q8[4]) at (3+9,0-7) {$\circ$};
\node (q9[4]) at (4+9,-2-7) {$\circ$};
{\color{blue}
\draw [->] (node cs:name=q1[4]) -- (node cs:name=q2[4]) [thick]node[above] {$2$};
\draw [->] (node cs:name=q5[4]) -- (node cs:name=q2[4])[thick];
\draw [->] (node cs:name=q6[4]) -- (node cs:name=q5[4])[thick];
\draw [->] (node cs:name=q2[4]) -- (node cs:name=q6[4])[thick]
node[below] {$6$};
\draw [->] (node cs:name=q5[4]) -- (node cs:name=q9[4])[thick]
node[below] {$9$};
\draw [->] (node cs:name=q8[4]) -- (node cs:name=q5[4]) [thick]node[above right] {$5$};
\draw [->] (node cs:name=q2[4]) -- (node cs:name=q8[4])[thick]
node[below] {$8$};
\draw [->] (node cs:name=q9[4]) -- (node cs:name=q7[4]) [thick]node[below] {$7$};
\draw [->] (node cs:name=q8[4]) -- (node cs:name=q1[4])[thick];
\draw [->] (node cs:name=q4[4]) -- (node cs:name=q8[4])[thick];
\draw [->] (node cs:name=q3[4]) -- (node cs:name=q1[4])[thick]
node[above] {$1$};
\draw [->] (node cs:name=q4[4]) -- (node cs:name=q3[4])[thick]
node[below] {$3$};
\draw [->] (node cs:name=q1[4]) -- (node cs:name=q4[4])[thick]
node[above left] {$4$};
\draw [->] (node cs:name=q7[4]) -- (node cs:name=q4[4])[thick];
}
\node (q1[5]) at (0+9,2) {$\circ$};
\node (q2[5]) at (6+9,2) {$\circ$};
\node (q3[5]) at (0+9,0) {$\circ$};
\node (q4[5]) at (2+9,0) {$\circ$};
\node (q5[5]) at (4+9,0) {$\circ$};
\node (q6[5]) at (6+9,0) {$\circ$};
\node (q7[5]) at (2+9,-2) {$\circ$};
\node (q8[5]) at (3+9,2) {$\circ$};
\node (q9[5]) at (4+9,-2) {$\circ$};
{\color{blue}
\draw [->] (node cs:name=q4[5]) -- (node cs:name=q5[5])[thick];
\draw [->] (node cs:name=q6[5]) -- (node cs:name=q5[5])[thick]
node[above right] {$5$};
\draw [->] (node cs:name=q2[5]) -- (node cs:name=q6[5])[thick]
node[below] {$6$};
\draw [->] (node cs:name=q5[5]) -- (node cs:name=q9[5])[thick]
node[below] {$9$};
\draw [->] (node cs:name=q5[5]) -- (node cs:name=q8[5]) [thick]node[above] {$8$};
\draw [->] (node cs:name=q8[5]) -- (node cs:name=q2[5])[thick]
node[above] {$2$};
\draw [->] (node cs:name=q9[5]) -- (node cs:name=q7[5])[thick] node[below] {$7$};
\draw [->] (node cs:name=q1[5]) -- (node cs:name=q8[5])[thick];
\draw [->] (node cs:name=q8[5]) -- (node cs:name=q4[5])[thick]
node[above left] {$4$};
\draw [->] (node cs:name=q3[5]) -- (node cs:name=q1[5])[thick]
node[above] {$1$};
\draw [->] (node cs:name=q4[5]) -- (node cs:name=q3[5])[thick]
node[below] {$3$};
\draw [->] (node cs:name=q7[5]) -- (node cs:name=q4[5])[thick];
}
\node (q1[6]) at (0+9,2+7) {$\circ$};
\node (q2[6]) at (6+9,2+7) {$\circ$};
\node (q3[6]) at (0+9,0+7) {$\circ$};
\node (q4[6]) at (2+9,0+7) {$\circ$};
\node (q5[6]) at (4+9,0+7) {$\circ$};
\node (q6[6]) at (6+9,0+7) {$\circ$};
\node (q7[6]) at (2+9,-2+7) {$\circ$};
\node (q8[6]) at (3+9,2+7) {$\circ$};
\node (q9[6]) at (4+9,-2+7) {$\circ$};
{\color{blue}
\draw [->] (node cs:name=q4[6]) -- (node cs:name=q5[6])[thick];
\draw [->] (node cs:name=q6[6]) -- (node cs:name=q5[6])[thick]
node[above right] {$4$};
\draw [->] (node cs:name=q2[6]) -- (node cs:name=q6[6])[thick]
node[below] {$6$};
\draw [->] (node cs:name=q5[6]) -- (node cs:name=q9[6])[thick]
node[below] {$9$};
\draw [->] (node cs:name=q5[6]) -- (node cs:name=q8[6]) [thick]node[above] {$8$};
\draw [->] (node cs:name=q8[6]) -- (node cs:name=q2[6])[thick]
node[above] {$2$};
\draw [->] (node cs:name=q9[6]) -- (node cs:name=q7[6]) [thick]node[below] {$7$};
\draw [->] (node cs:name=q1[6]) -- (node cs:name=q8[6])[thick];
\draw [->] (node cs:name=q8[6]) -- (node cs:name=q4[6])[thick]
node[above left] {$5$};
\draw [->] (node cs:name=q3[6]) -- (node cs:name=q1[6])[thick]
node[above] {$1$};
\draw [->] (node cs:name=q4[6]) -- (node cs:name=q3[6])[thick]
node[below] {$3$};
\draw [->] (node cs:name=q7[6]) -- (node cs:name=q4[6])[thick];
}
\draw (-1,9) circle(0pt) node[below]{$B=B^{(1)}$};
\draw[<->] (3,4) -- node[auto=left] {$\mu_8$} (3,3.3);
\draw (-1,9-7) circle(0pt) node[below]{$B^{(2)}$};
\draw[<->] (3,4-7) -- node[auto=left] {$\mu_5$} (3,3.3-7);
\draw (-1,9-14) circle(0pt) node[below]{$B^{(3)}$};
\draw[<->] (7,0-7) -- node[auto=left] {$\mu_4$} (7.7,0-7);
\draw (-1+17,9-14) circle(0pt) node[below]{$B^{(4)}$};
\draw[<->] (3+9,3.3-7) -- node[auto=right] {$\mu_8$} (3+9,4-7);
\draw (-1+17,9-7) circle(0pt) node[below]{$B^{(5)}$};
\draw[<->] (3+9,3.3) -- node[auto=right] {$\sigma_{45}$} (3+9,4);
\draw (-1+18,9) circle(0pt) node[below]{$B'=B^{(6)}$};
\end{tikzpicture}
\end{align*}
\caption{The quivers $B=B^{(1)}, \ldots, B^{(6)}=B'$ and the mutations connecting them.
We do not consider the wiring diagrams corresponding to the intermediate ones 
$B^{(2)}, \ldots, B^{(5)}$.}
\label{fig:mus}
\end{figure}

We introduce the cluster transformation
$\widehat{R}: \mathcal{Y}(B')  \rightarrow \mathcal{Y}(B)$
corresponding to the mutation sequence (\ref{mus}) by applying 
(\ref{eq:ad-tau-decomp}) as\footnote{For simplicity 
we identify $Y^{(t)}_i$ and $\rY_i(t)$ in the description from now on.}
\begin{align}\label{rhat}
\widehat{R} = 
\mathrm{Ad}\bigl(\Psi_q((Y^{(1)}_8)^{\varepsilon_1})^{\varepsilon_1}\bigr)\tau_{8,\varepsilon_1}
\mathrm{Ad}\bigl(\Psi_q((Y^{(2)}_5)^{\varepsilon_2})^{\varepsilon_2}\bigr)\tau_{5,\varepsilon_2}
\mathrm{Ad}\bigl(\Psi_q((Y^{(3)}_4)^{\varepsilon_3})^{\varepsilon_3}\bigr)\tau_{4,\varepsilon_3}
\mathrm{Ad}\bigl(\Psi_q((Y^{(4)}_8)^{\varepsilon_4})^{\varepsilon_4}\bigr)\tau_{8,\varepsilon_4}
\sigma_{45}.
\end{align}
The selection of $(\varepsilon_1, \varepsilon_2, \varepsilon_3, \varepsilon_4) \in \{1,-1\}^4$ 
influences the expressions, but $\widehat{R}$ itself remains independent of it.
For reader's convenience, we list the action of $\tau_{k,\varepsilon}$ appearing 
in (\ref{rhat}) explicitly.
\vspace{0.2cm}

\begin{tabular}{c|ccccccccc}
 & $Y^{(5)}_1$ & $Y^{(5)}_2$ & $Y^{(5)}_3$ & $Y^{(5)}_4$ & $Y^{(5)}_5$ 
 & $Y^{(5)}_6$ & $Y^{(5)}_7$ & $Y^{(5)}_8$ & $Y^{(5)}_9$ \\
\hline
$\tau_{8,+}$ & $Y^{(4)}_1$ &$q^{-1}Y^{(4)}_2Y^{(4)}_8$  & $Y^{(4)}_3$  &  
$q^{-1}Y^{(4)}_4Y^{(4)}_8$
& $Y^{(4)}_5$ & $Y^{(4)}_6$ & $Y^{(4)}_7$  &  $(Y^{(4)}_8)^{-1}$ & $Y^{(4)}_9$
\\
$\tau_{8,-}$ & $qY^{(4)}_1Y^{(4)}_8$ & $Y^{(4)}_2$ &  $Y^{(4)}_3$ & $Y^{(4)}_4$
& $qY^{(4)}_5Y^{(4)}_8$ & $Y^{(4)}_6$ & $Y^{(4)}_7$  & $(Y^{(4)}_8)^{-1}$ & $Y^{(4)}_9$
\end{tabular}

\noindent
\vspace{0.5cm}

\begin{tabular}{c|ccccccccc}
 & $Y^{(4)}_1$ & $Y^{(4)}_2$ & $Y^{(4)}_3$ & $Y^{(4)}_4$ & $Y^{(4)}_5$ 
 & $Y^{(4)}_6$ & $Y^{(4)}_7$ & $Y^{(4)}_8$ & $Y^{(4)}_9$ \\
\hline
$\tau_{4,+}$ & $Y^{(3)}_1$ &  $Y^{(3)}_2$ & $q^{-1}Y^{(3)}_3Y^{(3)}_4$ & $(Y^{(3)}_4)^{-1}$ 
& $Y^{(3)}_5$ & $Y^{(3)}_6$ &  $Y^{(3)}_7$ & $qY^{(3)}_4Y^{(3)}_8$ & $Y^{(3)}_9$
\\
$\tau_{4,-}$ & $qY^{(3)}_1Y^{(3)}_4$ &  $Y^{(3)}_2$ &  $Y^{(3)}_3$ & $(Y^{(3)}_4)^{-1}$
& $Y^{(3)}_5$ & $Y^{(3)}_6$ & $q^{-1}Y^{(3)}_4Y^{(3)}_7$ & $Y^{(3)}_8$ & $Y^{(3)}_9$
\end{tabular}

\noindent
\vspace{0.5cm}

\begin{tabular}{c|ccccccccc}
 & $Y^{(3)}_1$ & $Y^{(3)}_2$ & $Y^{(3)}_3$ & $Y^{(3)}_4$ & $Y^{(3)}_5$ 
 & $Y^{(3)}_6$ & $Y^{(3)}_7$ & $Y^{(3)}_8$ & $Y^{(3)}_9$ \\
\hline
$\tau_{5,+}$ & $Y^{(2)}_1$ & $q^{-1}Y^{(2)}_2Y^{(2)}_5$ & $Y^{(2)}_3$  &  
$Y^{(2)}_4$ & $(Y^{(2)}_5)^{-1}$ &  $Y^{(2)}_6$ & $Y^{(2)}_7$ &   $Y^{(2)}_8$ & $qY^{(2)}_5Y^{(2)}_9$
\\
$\tau_{5,-}$ & $Y^{(2)}_1$  & $Y^{(2)}_2$ & $Y^{(2)}_3$ & $Y^{(2)}_4$ & $(Y^{(2)}_5)^{-1}$ 
& $q^{-1}Y^{(2)}_5Y^{(2)}_6$ & $Y^{(2)}_7$ & $q^{-1}Y^{(2)}_5Y^{(2)}_8$ & $Y^{(2)}_9$
\end{tabular}

\noindent
\vspace{0.5cm}

\begin{tabular}{c|ccccccccc}
 & $Y^{(2)}_1$ & $Y^{(2)}_2$ & $Y^{(2)}_3$ & $Y^{(2)}_4$ & $Y^{(2)}_5$ 
 & $Y^{(2)}_6$ & $Y^{(2)}_7$ & $Y^{(2)}_8$ & $Y^{(2)}_9$ \\
\hline
$\tau_{8,+}$ & $Y^{(1)}_1$ & $Y^{(1)}_2$  &  $Y^{(1)}_3$  &  
$q^{-1}Y^{(1)}_4Y^{(1)}_8$
& $Y^{(1)}_5$ & $Y^{(1)}_6$ & $Y^{(1)}_7$ &  $(Y^{(1)}_8)^{-1}$ & $qY^{(1)}_8Y^{(1)}_9$
\\
$\tau_{8,-}$ & $Y^{(1)}_1$  & $Y^{(1)}_2$ &  $Y^{(1)}_3$ & $Y^{(1)}_4$
& $qY^{(1)}_5Y^{(1)}_8$ & $Y^{(1)}_6$ & $qY^{(1)}_7Y^{(1)}_8$  & $(Y^{(1)}_8)^{-1}$ & $Y^{(1)}_9$
\end{tabular}

\vspace{0.2cm}
\noindent
The tables mean, for instance  $\tau_{8,+}(Y^{(5)}_1) = Y^{(4)}_1$ and 
$\tau_{8,-}(Y^{(5)}_1) = qY^{(4)}_1Y^{(4)}_8$.
We set 
\begin{align}\label{taue}
\tau_{\varepsilon_1, \varepsilon_2, \varepsilon_3, \varepsilon_4} = 
\tau_{8,\varepsilon_1}  
\tau_{5,\varepsilon_2}  
\tau_{4,\varepsilon_3}  
\tau_{8,\varepsilon_4}  \sigma_{45}:\;
\mathcal{Y}(B')  \rightarrow \mathcal{Y}(B),
 \end{align}
 and call it the monomial part of $\widehat{R}$.
 
\vspace{0.5cm}
\begin{example}\label{ex:1}
$\tau_{--++}$ and $\tau_{--++}^{-1}$ are given as follows: 
\begin{align}
&\tau_{--++}: \left\{ ~~
\begin{alignedat}{3}
Y'_1 & \mapsto Y_1,\quad &
Y'_4 & \mapsto q Y^{-1}_5Y^{-1}_8, \quad &
Y'_7 & \mapsto qY_7Y_8,
\\
Y'_2 & \mapsto Y_2Y_4Y_5,\quad & 
Y'_5 & \mapsto Y_5,\quad  &
Y'_8 &  \mapsto qY^{-1}_4Y^{-1}_5,
\\
Y'_3 &\mapsto q^{-1}Y_3 Y_4,\quad &
Y'_6 & \mapsto Y_5Y_6Y_8,\quad &
Y'_9 &\mapsto Y_9,
\end{alignedat}
\right.
\\
&\tau_{--++}^{-1}: \left\{ ~~
\begin{alignedat}{3}
Y_1 & \mapsto Y'_1, \quad &
Y_4 & \mapsto q Y'^{-1}_5Y'^{-1}_8, \quad &
Y_7 & \mapsto Y'_5Y'_4Y'_7,
\\
Y_2 & \mapsto q Y'_2Y'_8, \quad & 
Y_5 & \mapsto Y'_5, \quad & 
Y_8 & \mapsto qY'^{-1}_4Y'^{-1}_5,
\\
Y_3 & \mapsto Y'_5Y'_3Y'_8, \quad &
Y_6 & \mapsto qY'_4Y'_6, \quad &
Y_9 & \mapsto Y'_9.
\end{alignedat}
\right.
\displaybreak[0]
\end{align}
By using them, $\widehat{R}$ in (\ref{rhat}) for the choice 
$(\varepsilon_1,  \varepsilon_2,  \varepsilon_3,  \varepsilon_4)=(-,-,+,+)$ 
is expressed as
\begin{align}
\widehat{R} &= 
\mathrm{Ad}\bigl(\Psi_q((Y^{(1)}_8)^{-1})^{-1}\bigr)\tau_{8,-}
\mathrm{Ad}\bigl(\Psi_q((Y^{(2)}_5)^{-1})^{-1}\bigr)\tau_{5,-}
\mathrm{Ad}\bigl(\Psi_q(Y^{(3)}_4)\bigr)\tau_{4,+}
\mathrm{Ad}\bigl(\Psi_q(Y^{(4)}_8)\bigr)\tau_{8,+}\sigma_{45}
\nonumber \\
&= 
\mathrm{Ad}\bigl(\Psi_q(Y_8^{-1})^{-1}\bigr)
\mathrm{Ad}\bigl(\Psi_q(qY^{-1}_5Y^{-1}_8)^{-1}\bigr)
\mathrm{Ad}\bigl(\Psi_q(Y_4)\bigr)
\mathrm{Ad}\bigl(\Psi_q(qY_4Y_5)\bigr)\tau_{--++}
\label{dode0}\\
&= 
\mathrm{Ad}\bigl(\Psi_q(Y_8^{-1})^{-1}\bigr)
\mathrm{Ad}\bigl(\Psi_q(qY^{-1}_5Y^{-1}_8)^{-1}\bigr)\tau_{--++}
\mathrm{Ad}\bigl(\Psi_q(qY'^{-1}_5Y'^{-1}_8)\bigr)
\mathrm{Ad}\bigl(\Psi_q(Y'^{-1}_8)\bigr).
\label{dode}
\displaybreak[0]
\end{align}
The formula (\ref{dode}) is derived from (\ref{dode0}) 
by moving $\tau_{--++}$ to the left by using  $\tau_{--++}^{-1}$.
In (\ref{dode}), the monomial part $\tau_{--++}$ may be replaced by 
$\mathrm{Ad}\bigl(\Psi_q(Y_5^{-1})^{-1}\bigr)\tau_{--++}
\mathrm{Ad}\bigl(\Psi_q(Y'^{-1}_5)\bigr)$ since $\tau_{--++}Y_5'  = Y_5\tau_{--++}$.
Then one can apply the pentagon identity (\ref{penta}) to the first and the last three $\Psi_q$'s to deduce a relatively simple formula 
\begin{align}
\widehat{R} = 
\mathrm{Ad}\bigl(\Psi_q(Y_5^{-1})^{-1}\Psi_q(Y_8^{-1})^{-1}\bigr)\tau_{--++}
\mathrm{Ad}\bigl(\Psi_q(Y'^{-1}_8)\Psi_q(Y'^{-1}_5)\bigr).
\label{Rf1}
\end{align}
\end{example}

\begin{example}\label{ex:15}
$\tau_{+-+-}$ and $\tau_{+-+-}^{-1}$ are given as follows: 
\begin{align}
&\tau_{+-+-}: \left\{ ~~
\begin{alignedat}{3}
Y'_1 & \mapsto q^2Y_1Y_4Y_5,\quad &
Y'_4 & \mapsto Y_4, \quad &
Y'_7 & \mapsto Y_7,
\\
Y'_2 & \mapsto Y_2,\quad & 
Y'_5 & \mapsto q^{-1}Y^{-1}_4Y^{-1}_8,\quad  &
Y'_8 &  \mapsto qY^{-1}_4Y^{-1}_5,
\\
Y'_3 &\mapsto q^{-2}Y_3 Y_4Y_8,\quad &
Y'_6 & \mapsto q^{-1}Y_5Y_6,\quad &
Y'_9 &\mapsto qY_8Y_9,
\end{alignedat}
\right.
\\
&\tau_{+-+-}^{-1}: \left\{ ~~
\begin{alignedat}{3}
Y_1 & \mapsto q^{-1}Y'_1Y'_8, \quad &
Y_4 & \mapsto Y'_4, \quad &
Y_7 & \mapsto Y'_7,
\\
Y_2 & \mapsto Y'_2, \quad & 
Y_5 & \mapsto q^{-1}Y'^{-1}_4Y'^{-1}_8, \quad & 
Y_8 & \mapsto qY'^{-1}_4Y'^{-1}_5,
\\
Y_3 & \mapsto qY'_3Y'_5, \quad &
Y_6 & \mapsto Y'_4Y'_6Y'_8, \quad &
Y_9 & \mapsto Y'_4Y'_5Y'_9.
\end{alignedat}
\right.
\displaybreak[0]
\end{align}
By using them, $\widehat{R}$ in (\ref{rhat}) for the choice 
$(\varepsilon_1,  \varepsilon_2,  \varepsilon_3,  \varepsilon_4)=(+,-,+,-)$ 
is expressed as
\begin{align}
\widehat{R} &= 
\mathrm{Ad}\bigl(\Psi_q(Y^{(1)}_8)\bigr)\tau_{8,+}
\mathrm{Ad}\bigl(\Psi_q((Y^{(2)}_5)^{-1})^{-1}\bigr)\tau_{5,-}
\mathrm{Ad}\bigl(\Psi_q(Y^{(3)}_4)\bigr)\tau_{4,+}
\mathrm{Ad}\bigl(\Psi_q((Y^{(4)}_8)^{-1})^{-1}\bigr)\tau_{8,-}\sigma_{45}
\nonumber\\
&= 
\mathrm{Ad}\bigl(\Psi_q(Y_8)\bigr)
\mathrm{Ad}\bigl(\Psi_q(Y^{-1}_5)^{-1}\bigr)
\mathrm{Ad}\bigl(\Psi_q(q^{-1}Y_4Y_8)\bigr)
\mathrm{Ad}\bigl(\Psi_q(qY^{-1}_4Y^{-1}_5)^{-1}\bigr)\tau_{+-+-}
\label{dode7}\\
&= 
\mathrm{Ad}\bigl(\Psi_q(Y_8)\Psi_q(Y^{-1}_5)^{-1}\bigr)
\tau_{+-+-}
\mathrm{Ad}\bigl(\Psi_q(Y'^{-1}_5)\Psi_q(Y'_8)^{-1}\bigr).
\label{dode8}
\end{align}
\end{example}

In addition, we note the fact that $\tau_{-+-+} = \tau_{++--}$.
For comparison, we provide a list of the formulas for $\widehat{R}$ corresponding to all possible sign choices in 
Appendix \ref{app:RY}.
Performing a straightforward calculation using any one 
of the formulas for $\widehat{R}$ in Example \ref{ex:1}, Example \ref{ex:15} or in Appendix \ref{app:RY}, 
one arrives at 
\begin{proposition}\label{pr:RY}
The cluster transformation $\widehat{R}: \mathcal{Y}(B')  \rightarrow \mathcal{Y}(B)$ is given by
\begin{alignat}{3}
Y'_1  &\mapsto \Lambda_4 Y_1,  &
Y'_4 & \mapsto \Lambda_4 Y^{-1}_5 \Lambda_8^{-1}, \quad &
Y'_7 & \mapsto \Lambda_8 Y_7,
\nonumber\\
Y'_2 & \mapsto Y_2Y_4Y_5\Lambda^{-1}_4, \quad & 
Y'_5 & \mapsto \Lambda_5 Y^{-1}_8 \Lambda^{-1}_4, &
Y'_8 & \mapsto \Lambda_8 Y^{-1}_4 \Lambda_5^{-1},
\label{RYY}\\
Y'_3 & \mapsto Y_3Y_8Y_4 \Lambda^{-1}_8, &
Y'_6 & \mapsto \Lambda_5 Y_6, & 
Y'_9 & \mapsto Y_9Y_5Y_8 \Lambda^{-1}_5,
\nonumber
\displaybreak[0]
\end{alignat}
where $\Lambda_4, \Lambda_5, \Lambda_6$ read as 
\begin{align}\label{aldef}
\Lambda_4 = 1 + q^{-1}Y_4 + Y_4 Y_5,\quad
\Lambda_5 = 1 + q^{-1}Y_5 + Y_5 Y_8,\quad
\Lambda_8 = 1+ q^{-1} Y_8 + Y_8 Y_4.
\end{align}
\end{proposition}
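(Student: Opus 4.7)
I would use the compact formula (\ref{Rf1}) for $\widehat{R}$ derived in Example \ref{ex:1},
$$\widehat{R} = \mathrm{Ad}\bigl(\Psi_q(Y_5^{-1})^{-1}\Psi_q(Y_8^{-1})^{-1}\bigr)\,\tau_{--++}\,\mathrm{Ad}\bigl(\Psi_q(Y'^{-1}_8)\Psi_q(Y'^{-1}_5)\bigr),$$
and compute $\widehat{R}(Y'_i)$ for each $i$ by tracking the three factors one after another. The sole analytic tool needed is the following immediate consequence of (\ref{Prec}): whenever $UX = q^{2m}XU$ one has $\mathrm{Ad}(\Psi_q(U))(X) = X(1+qU)$ if $m=1$ and $\mathrm{Ad}(\Psi_q(U))(X) = X(1+q^{-1}U)^{-1}$ if $m=-1$, with $\mathrm{Ad}(\Psi_q(U)^{-1})$ producing the reciprocal bracketed factor; combined with the general fact that the adjoint action commutes with rational-function formation, so one may replace $V$ in a rational expression by $\mathrm{Ad}(\cdot)(V)$.

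The computation for each $Y'_i$ then proceeds in three steps. First, read off the $q$-commutations of $Y'_i$ with $Y'^{-1}_5$ and $Y'^{-1}_8$ directly from the quiver of $B'$ in Figure \ref{fig:quiver121}(b) and apply the rightmost adjoint; this yields $Y'_i$ times a short product of factors of the form $(1+qY'^{-1}_k)^{\pm 1}$ for $k\in\{5,8\}$. Second, rewrite everything in $\mathcal{Y}(B)$ via the monomial substitution table for $\tau_{--++}$ tabulated in Example \ref{ex:1}. Third, apply the leftmost adjoint, which prepends analogous factors $(1+qY_k^{-1})^{\pm 1}$. The resulting nine expressions are then reorganized using the $q$-commutations $Y_4Y_5 = q^{-2}Y_5Y_4$, $Y_5Y_8 = q^{-2}Y_8Y_5$, $Y_8Y_4 = q^{-2}Y_4Y_8$ in $\mathcal{Y}(B)$, at which point each accumulated combination collapses into one of the trinomials $\Lambda_4,\Lambda_5,\Lambda_8$ defined in (\ref{aldef}); for example $(1+qY_8^{-1})Y_8 = Y_8 + q^{-1}$ combined with the nested factor $(1+q^{-1}Y_5^{-1}(1+qY_8^{-1})^{-1})^{-1}$ simplifies to an expression whose denominator is a conjugate of $\Lambda_5$.

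The ``corner'' variables $Y'_1, Y'_7, Y'_9$ and the ``edge'' variables $Y'_2, Y'_3, Y'_6$ are the straightforward cases: each image is either $\Lambda\cdot(\text{monomial})$ or $(\text{monomial})\cdot\Lambda^{-1}$, so a single adjoint suffices to generate the $\Lambda$-factor. The main obstacle is the three ``central'' variables $Y'_4, Y'_5, Y'_8$, whose images $\Lambda_jY_k^{-1}\Lambda_l^{-1}$ are sandwiched between two distinct trinomials and so receive nontrivial contributions from both the left and the right adjoints; here one must carefully combine the various $(1+qY_k^{-1})$-factors with the monomials coming from $\tau_{--++}$ into the stated compact form. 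As an independent sanity check, one may verify that the nine images in (\ref{RYY}) satisfy the exchange relations $Y'_iY'_j = q^{2b'_{ij}}Y'_jY'_i$ of $B^{(6)}=B'$, which confirms that no sign or $q$-power has been misplaced. A parallel and purely calculational alternative is to iterate the mutation formula (\ref{eq:d-mutation}) through the five stages of (\ref{mus}), using the intermediate exchange matrices displayed in Figure \ref{fig:mus}; this avoids adjoints altogether but requires tracking nested fractions of $1+q^{2j-1}Y^{\pm1}$ factors at every step.
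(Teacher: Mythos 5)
Your proposal is correct and follows essentially the same route as the paper, which simply states that the proposition follows by a straightforward calculation from one of the explicit formulas such as (\ref{Rf1}); your use of $\mathrm{Ad}(\Psi_q(U))(X)=X(1+qU)$ for $UX=q^2XU$ (and its inverse variants) together with the $\tau_{--++}$ substitution table is precisely the intended computation. The extra consistency checks you mention (verifying the $B'$ exchange relations, or iterating (\ref{eq:d-mutation})) are sound but not needed beyond the direct calculation.
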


\begin{remark}\label{re:c2}
$\widehat{R}$ preserves the center in Remark \ref{re:c1}.  
In fact, using (\ref{RYY}) one can easily check 
\begin{align*} 
\widehat{R}(Y'_9Y'_5Y'_1) = Y_9Y_5Y_1,\quad 
\widehat{R}(Y'_3Y'_8Y'_6) = Y_3Y_8Y_6,\quad 
\widehat{R}(Y'_2Y'_4Y'_7) = Y_2Y_4Y_7.
\end{align*}
\end{remark}

\subsection{$\widehat{R}$ satisfies the tetrahedron equation}
Up to this point in our construction, 
$\widehat{R}$ is a transformation between the 9 variables $\{Y_1,\ldots, Y_9\}$ and $\{Y'_1,\ldots, Y'_9\}$.
In the upcoming section, we will create a scenario where it can be attributed to a transformation of variables,
we call them canonical variables or $q$-Weyl variables,  
located at the vertices $1, 2, 3$ of the wiring diagram (highlighted in red) in Figure \ref{fig:R123}.

\begin{figure}[H]
\begin{align*}
\begin{tikzpicture}
\begin{scope}[>=latex,xshift=0pt]
{\color{red}
\fill (1,0.5) circle(2pt) coordinate(A) node[below]{1};
\fill (2,1.5) circle(2pt) coordinate(B) node[above]{2};
\fill (3,0.5) circle(2pt) coordinate(C) node[below]{3};
}
\draw [-] (0,2) to [out = 0, in = 135] (B);
\draw [-] (B) -- (C); 
\draw [->] (C) to [out = -45, in = 180] (4,0);
\draw [-] (0,1) to [out = 0, in = 135] (A); 
\draw [-] (A) to [out = -45, in = -135] (C);
\draw [->] (C) to [out = 45, in = 180] (4,1);
\draw [-] (0,0) to [out = 0, in = -135] (A); 
\draw [-] (A) -- (B);
\draw [->] (B) to [out = 45, in = 180] (4,2);
\coordinate (P1) at (4.5,1);
\coordinate (P2) at (5.5,1);
{\color{red}
\draw[<-] (P1) -- (P2);
\draw (5,1) circle(0pt) node[above]{$\widehat{R}_{123}$};
}
{\color{blue}
\draw (1.3,1.9) circle(2pt) coordinate(1) node[above]{$1$};
\draw (2.7,1.9) circle(2pt) coordinate(2) node[above]{$2$};
\draw (0.4,0.9) circle(2pt) coordinate(3) node[above]{$3$};
\draw (1.5,1) circle(2pt) coordinate(4) node[above left]{$4$};
\draw (2.5,1) circle(2pt) coordinate(5) node[above right]{$5$};
\draw (3.6,0.9) circle(2pt) coordinate(6) node[above]{$6$};
\draw (0.4,0.1) circle(2pt) coordinate(7) node[below]{$7$};
\draw (2,0.1) circle(2pt) coordinate(8) node[below]{$8$};
\draw (3.6,0.1) circle(2pt) coordinate(9) node[below]{$9$};
\qarrow{1}{2}
\qarrow{2}{5}
\qarrow{5}{4}
\qarrow{4}{1}
\qarrow{4}{8}
\qarrow{8}{7}
\qarrow{7}{3}
\qarrow{3}{4}
\qarrow{5}{6}
\qarrow{6}{9}
\qarrow{9}{8}
\qarrow{8}{5}
}
\end{scope}
\begin{scope}[>=latex,xshift=170pt]
{\color{red}
\fill (3,1.5) circle(2pt) coordinate(A) node[above]{1};
\fill (2,0.5) circle(2pt) coordinate(B) node[below]{2};
\fill (1,1.5) circle(2pt) coordinate(C) node[above]{3};
}
\draw [-] (0,0) to [out = 0, in = -135] (B);
\draw [-] (B) -- (A); 
\draw [->] (A) to [out = 45, in = 180] (4,2);
\draw [-] (0,1) to [out = 0, in = -135] (C); 
\draw [-] (C) to [out = 45, in = 135] (A);
\draw [->] (A) to [out = -45, in = 180] (4,1);
\draw [-] (0,2) to [out = 0, in = 135] (C); 
\draw [-] (C) -- (B);
\draw [->] (B) to [out = -45, in = 180] (4,0);
{\color{blue}
\draw (1.3,0.1) circle(2pt) coordinate(7) node[below]{$7$};
\draw (2.7,0.1) circle(2pt) coordinate(9) node[below]{$9$};
\draw (0.4,1.05) circle(2pt) coordinate(3) node[below]{$3$};
\draw (1.5,1) circle(2pt) coordinate(5) node[below left]{$5$};
\draw (2.5,1) circle(2pt) coordinate(4) node[below right]{$4$};
\draw (3.6,1.05) circle(2pt) coordinate(6) node[below]{$6$};
\draw (0.4,1.9) circle(2pt) coordinate(1) node[above]{$1$};
\draw (2,1.9) circle(2pt) coordinate(8) node[above]{$8$};
\draw (3.6,1.9) circle(2pt) coordinate(2) node[above]{$2$};
\qarrow{1}{8}
\qarrow{8}{5}
\qarrow{5}{3}
\qarrow{3}{1}
\qarrow{4}{8}
\qarrow{8}{2}
\qarrow{2}{6}
\qarrow{6}{4}
\qarrow{5}{4}
\qarrow{4}{9}
\qarrow{9}{7}
\qarrow{7}{5}
}
\end{scope}
\end{tikzpicture}
\end{align*}
\caption{Cluster transformation $\widehat{R}_{123}$, which acts on the $q$-Weyl variables 
attached to the vertices $1,2,3$ of the wiring diagram colored in red.}
\label{fig:R123}
\end{figure}
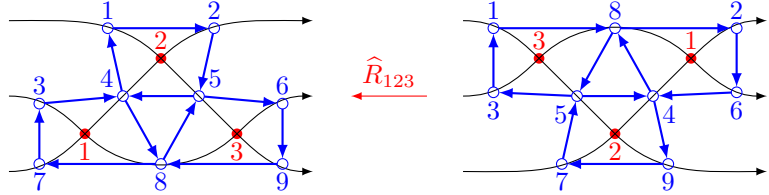

\noindent
A precise description will be given in (\ref{Rcom}).
Here we suppose that it is the case and denote $\widehat{R}$ by $\widehat{R}_{123}$.
The following result is essentially due to \cite{SY22}.
\begin{proposition}\label{pr:sy}
$\widehat{R}$ satisfies the tetrahedron equation:
\begin{align}\label{TE1}
\widehat{R} _{456}\widehat{R} _{236}\widehat{R} _{135} \widehat{R} _{124}=
\widehat{R} _{124}\widehat{R} _{135}\widehat{R} _{236}\widehat{R} _{456} .
\end{align}
\end{proposition}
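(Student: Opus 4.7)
The strategy is to apply Theorem \ref{thm:piv} (synchronicity between quantum and tropical $y$-seeds), which reduces the operator identity (\ref{TE1}) to a finite combinatorial check at the tropical level. The point is that $\widehat{R}$ was constructed in (\ref{rhat}) as a canonical composition of four mutations and one transposition implementing the braid move $s_{i}s_{i+1}s_{i}=s_{i+1}s_{i}s_{i+1}$ on the square quiver of a three-crossing wiring diagram; iterating this construction across a larger wiring diagram gives a purely cluster-theoretic realization of both sides of (\ref{TE1}).

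First, I would fix a wiring diagram for the longest element $w_{0}$ of $W(A_{3})$, whose reduced expressions have length six. The six indices $1,\dots,6$ appearing in (\ref{TE1}) correspond to the six crossings of such a diagram, and the associated square quiver is obtained by gluing four copies of the pattern in Figure \ref{fig:quiver121} along shared edges. Each factor $\widehat{R}_{ijk}$ is then the local cluster transformation (\ref{rhat})--(\ref{mus}) applied to the three crossings labelled $\{i,j,k\}$ that form a braid-move triangle. The two sides of (\ref{TE1}) correspond to the two well-known sequences of four braid moves $123121 \leftrightarrow 321323$ connecting two specified reduced expressions of $w_{0}$; geometrically this is the classical tetrahedron picture associated with the two ways of sweeping three planes past a fourth.

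Next, by Theorem \ref{thm:piv} it suffices to verify the corresponding identity at the tropical level. Concretely: starting from the initial tropical $y$-seed $(B,u)$ attached to the fixed wiring diagram, apply on the one hand the $20$-step mutation sequence coming from $\widehat{R}_{456}\widehat{R}_{236}\widehat{R}_{135}\widehat{R}_{124}$ (five steps per factor, as in (\ref{mus})) and on the other hand the $20$-step sequence for $\widehat{R}_{124}\widehat{R}_{135}\widehat{R}_{236}\widehat{R}_{456}$, and check that both produce the same final pair $(B^{\mathrm{final}},y^{\mathrm{final}})$. Since tropical mutation (\ref{eq:trop-mutation}) is piecewise-linear in the $c$-vectors, this reduces to an entirely mechanical integer computation on a quiver with a fixed, modest number of vertices. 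A useful sanity check en route is Remark \ref{re:c2}: the three wire-products must be preserved by each $\widehat{R}_{ijk}$ and hence by both composites, which gives a quick partial cross-verification.

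The hard part, as usual for this type of argument, is not analytic but organizational. One must (i) designate, once and for all, which quiver vertices of the global square quiver are the ``corner'', ``edge'', and ``interior'' vertices of each of the four local braid-triangles $\{1,2,4\},\{1,3,5\},\{2,3,6\},\{4,5,6\}$; (ii) track how the labels of these vertices are relabelled after each application of the local sequence (\ref{mus}), in particular how the built-in transposition $\sigma_{45}$ in (\ref{mus}) propagates through subsequent $\widehat{R}$'s; and (iii) confirm that, after all relabelings, the twenty tropical mutations on each side land in identical final exchange matrices and identical tuples of $c$-vectors. Once this bookkeeping is carried out---as done, essentially, in \cite{SY22}---the tropical identity is a direct check, and Theorem \ref{thm:piv} upgrades it to the quantum identity (\ref{TE1}), completing the proof.
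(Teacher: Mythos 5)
Your proposal follows essentially the same route as the paper: extend the wiring diagram and square quiver to the longest element of $W(A_3)$, realize both sides of (\ref{TE1}) as the two $20$-step mutation sequences of Figure \ref{fig:rikisaku}, verify the resulting identity of tropical $y$-seeds by direct computation (as done in \cite[Sec.~A.2]{SY22}), and invoke Theorem \ref{thm:piv} to upgrade it to the quantum identity. This matches the paper's proof in both structure and substance, so no further comparison is needed.
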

\begin{proof}
For each reduced word for  
the longest element of the Weyl group $W(A_3)$, draw a wiring diagram and a square quiver
extending Figure \ref{fig:R123} naturally.
The quivers and the canonical variables living on the crossings of the wiring diagrams (red vertices $1,\ldots, 6$)
are connected by the cluster transformations 
$\widehat{R}_{ijk}$ as in Figure \ref{fig:rikisaku}.
\vspace*{-2cm}
\begin{figure}[H]
\hspace*{-1.1cm}
\includegraphics[clip,scale=0.85]{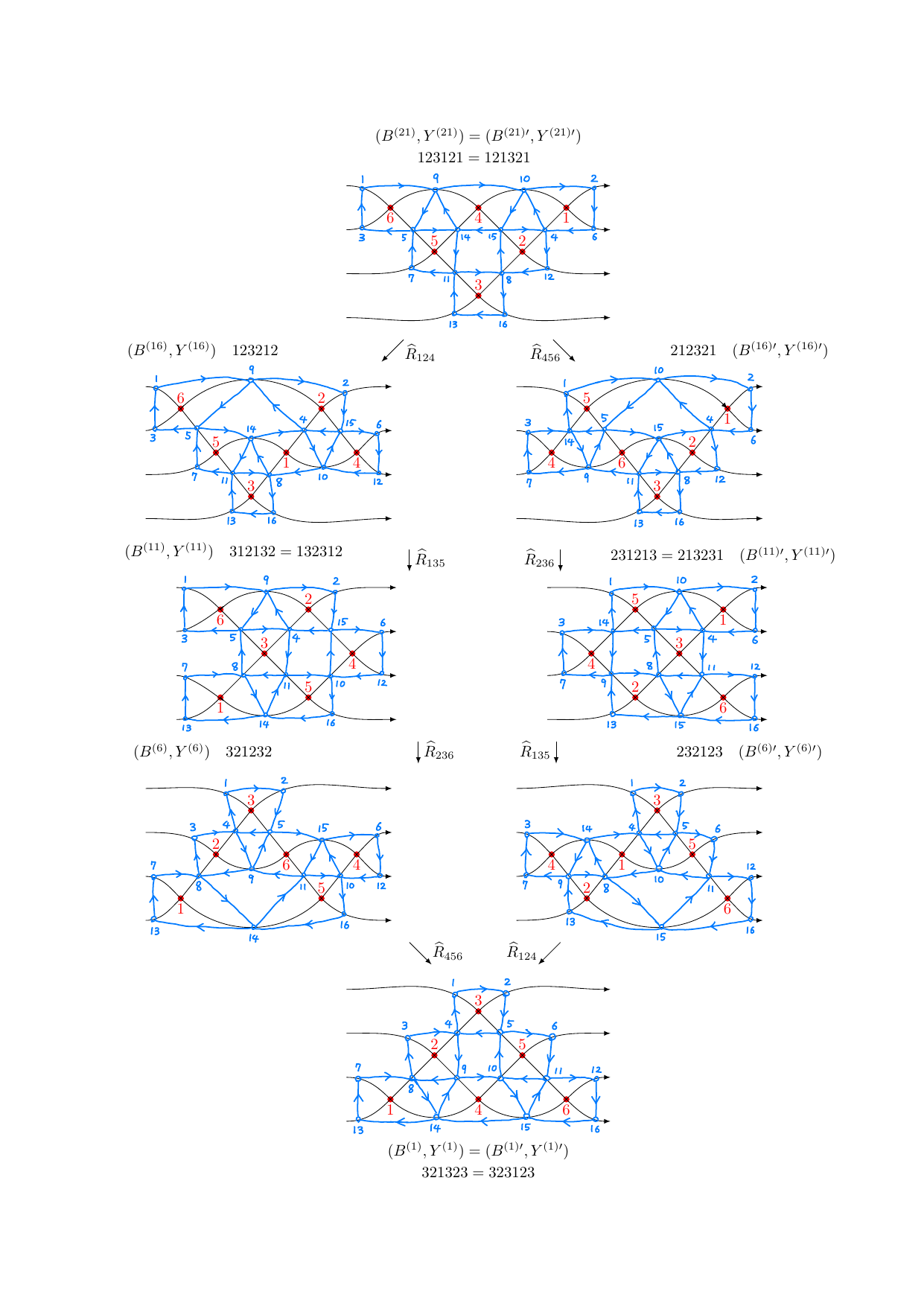}
\vspace*{-2.5cm}
\caption{Cluster transformations $\widehat{R}_{ijk}$.
The wiring diagrams have 6 crossings (red).
The quivers (blue) have 16 vertices.
The seeds $(B^{(t)}, \bY^{(t)})$ and $(B^{(t)\prime}, \bY^{(t)\prime})$ will be 
explained in detail in Section \ref{ss:ms}.}
\label{fig:rikisaku}
\end{figure}

\noindent
In Figure \ref{fig:rikisaku},
let $\nu$ and $\nu'$ be the mutation sequences corresponding to the left path 
$(B^{(1)},Y^{(1)}) \rightarrow 
(B^{(6)},Y^{(6)}) \rightarrow 
(B^{(11)},Y^{(11)}) \rightarrow 
(B^{(16)},Y^{(16)}) \rightarrow 
(B^{(21)},Y^{(21)})=\nu(B^{(1)},Y^{(1)})$ 
and the right path 
$(B^{(1)\prime},Y^{(1)\prime}) \rightarrow 
(B^{(6)\prime},Y^{(6)\prime}) \rightarrow 
(B^{(11)\prime},Y^{(11)\prime}) \rightarrow 
(B^{(16)\prime},Y^{(16)\prime}) \rightarrow 
 (B^{(21)\prime},Y^{(21)\prime})=\nu'(B^{(1)},Y^{(1)})$, 
 respectively.
Let $\nu(B^{(1)},y^{(1)})$ and $\nu'(B^{(1)},y^{(1)})$ be the 
tropical $y$-seeds generated by the same mutation sequences. 
It has been checked \cite[Sec. A.2]{SY22} 
that they satisfy the equality  
$\nu(B^{(1)},y^{(1)})= \nu'(B^{(1)},y^{(1)})$.
Thus Theorem \ref{thm:piv} enforces the equality of quantum $y$-seeds
$\nu(B^{(1)}, Y^{(1)}) = \nu'(B^{(1)\prime}, Y^{(1)\prime})$.
In terms of cluster transformations, it is means the tetrahedron equation  
$\widehat{R}_{456}\widehat{R}_{236}\widehat{R}_{135}\widehat{R}_{124} = 
\widehat{R}_{124}\widehat{R}_{135}\widehat{R}_{236}\widehat{R}_{456}$.
\end{proof}

\subsection{Monomial solutions to the tetrahedron equation}\label{ss:ms}

In this subsection we provide additional details regarding Figure \ref{fig:rikisaku} and Proposition \ref{pr:sy}.
Let $(B^{(1)}, \bY^{(1)}) = (B^{(1)\prime}, \bY^{(1)\prime})$ 
be the initial quantum $y$-seed corresponding to the quiver at the bottom of  Figure \ref{fig:rikisaku}.
The quantum $y$-seeds $(B^{(t)}, \bY^{(t)})$ and 
$ (B^{(t)\prime}, \bY^{(t)\prime})\, (t=2,\ldots, 21)$, which pertain 
to the left and the right paths are determined from it by the mutation sequences, and 
we have just shown that the 
final results coincide, i.e., $(B^{(21)}, \bY^{(21)}) = (B^{(21)\prime}, \bY^{(21)\prime})$.
Set $\bY^{(t)}=(Y^{(t)}_1,\ldots, Y^{(t)}_{16})$ and 
$\bY^{(t)\prime}=(Y^{(t)\prime}_1,\ldots, Y^{(t)\prime}_{16})$.

The quantum $y$-seeds 
$(B^{(t)}, \bY^{(t)})\, (t=2,\ldots, 21)$ are determined from the initial one 
$(B^{(1)}, \bY^{(1)}) $ by
\begin{equation}\label{b21L}
\begin{split}
&
(B^{(1)}, \bY^{(1)}) \underset{\varepsilon_1}{\overset{\mu_{15}}{\longleftrightarrow}}
(B^{(2)}, \bY^{(2)})  \underset{\varepsilon_2}{\overset{\mu_{11}}{\longleftrightarrow}}
(B^{(3)}, \bY^{(3)})  \underset{\varepsilon_3}{\overset{\mu_{10}}{\longleftrightarrow}}
(B^{(4)}, \bY^{(4)})   \underset{\varepsilon_4}{\overset{\mu_{15}}{\longleftrightarrow}}
(B^{(5)}, \bY^{(5)})  \overset{\sigma_{10,11}}{\longleftrightarrow} 
(B^{(6)}, \bY^{(6)}),
\\
&
(B^{(6)}, \bY^{(6)}) \underset{\varepsilon_1}{\overset{\mu_{9}}{\longleftrightarrow}}
(B^{(7)}, \bY^{(7)})   \underset{\varepsilon_2}{\overset{\mu_{5}}{\longleftrightarrow}} 
(B^{(8)}, \bY^{(8)})  \underset{\varepsilon_3}{\overset{\mu_{4}}{\longleftrightarrow}}
(B^{(9)}, \bY^{(9)})   \underset{\varepsilon_4}{\overset{\mu_{9}}{\longleftrightarrow} }
(B^{(10)}, \bY^{(10)})  \overset{\sigma_{4,5}}{\longleftrightarrow} 
(B^{(11)}, \bY^{(11)}),
\\
&
(B^{(11)}, \bY^{(11)}) \underset{\varepsilon_1}{\overset{\mu_{14}}{\longleftrightarrow} }
(B^{(12)}, \bY^{(12)})   \underset{\varepsilon_2}{\overset{\mu_{11}}{\longleftrightarrow} }
(B^{(13)}, \bY^{(13)})  \underset{\varepsilon_3}{\overset{\mu_{8}}{\longleftrightarrow} }
(B^{(14)}, \bY^{(14)})  \underset{\varepsilon_4}{ \overset{\mu_{14}}{\longleftrightarrow}} 
(B^{(15)}, \bY^{(15)})  \overset{\sigma_{8,11}}{\longleftrightarrow} 
(B^{(16)}, \bY^{(16)}),
\\
&
(B^{(16)}, \bY^{(16)}) \underset{\varepsilon_1}{\overset{\begin{color}{red}\mu_{10}\end{color}}{\longleftrightarrow}}
(B^{(17)}, \bY^{(17)})   \underset{\varepsilon_2}{\overset{\begin{color}{red}\mu_{15}\end{color}}{\longleftrightarrow}} 
(B^{(18)}, \bY^{(18)})  \underset{\varepsilon_3}{\overset{\mu_{4}}{\longleftrightarrow}}
(B^{(19)}, \bY^{(19)})  \underset{\varepsilon_4}{ \overset{\mu_{10}}{\longleftrightarrow}} 
(B^{(20)}, \bY^{(20)})  \overset{\sigma_{4,15}}{\longleftrightarrow} 
(B^{(21)}, \bY^{(21)}),
\end{split}
\end{equation}
where the notation is parallel with (\ref{mus}).
In particular the choice of signs $\varepsilon_1, \ldots, \varepsilon_4$ does not influence the mutations themselves.
According to (\ref{rhat}),  each line in the above corresponds to a cluster transformation appearing in the left path of 
Figure \ref{fig:rikisaku} as follows: 
\begin{equation}\label{r124a}
\begin{split}
\widehat{R}_{456} &= 
\mathrm{Ad}\bigl(\Psi_q((Y^{(1)}_{15})^{\varepsilon_1})^{\varepsilon_1}\bigr)\tau_{15,\varepsilon_1}
\mathrm{Ad}\bigl(\Psi_q((Y^{(2)}_{11})^{\varepsilon_2})^{\varepsilon_2}\bigr)\tau_{11,\varepsilon_2}
\mathrm{Ad}\bigl(\Psi_q((Y^{(3)}_{10})^{\varepsilon_3})^{\varepsilon_3}\bigr)\tau_{10,\varepsilon_3}
\mathrm{Ad}\bigl(\Psi_q((Y^{(4)}_{15})^{\varepsilon_4})^{\varepsilon_4}\bigr)\tau_{15,\varepsilon_4}
\sigma_{10,11},
\\
\widehat{R}_{236} &= 
\mathrm{Ad}\bigl(\Psi_q((Y^{(6)}_{9})^{\varepsilon_1})^{\varepsilon_1}\bigr)\tau_{9,\varepsilon_1}
\mathrm{Ad}\bigl(\Psi_q((Y^{(7)}_{5})^{\varepsilon_2})^{\varepsilon_2}\bigr)\tau_{5,\varepsilon_2}
\mathrm{Ad}\bigl(\Psi_q((Y^{(8)}_{4})^{\varepsilon_3})^{\varepsilon_3}\bigr)\tau_{4,\varepsilon_3}
\mathrm{Ad}\bigl(\Psi_q((Y^{(9)}_{9})^{\varepsilon_4})^{\varepsilon_4}\bigr)\tau_{9,\varepsilon_4}
\sigma_{4,5},
\\
\widehat{R}_{135} &= 
\mathrm{Ad}\bigl(\Psi_q((Y^{(11)}_{14})^{\varepsilon_1})^{\varepsilon_1}\bigr)\tau_{14,\varepsilon_1}
\mathrm{Ad}\bigl(\Psi_q((Y^{(12)}_{11})^{\varepsilon_2})^{\varepsilon_2}\bigr)\tau_{11,\varepsilon_2}
\mathrm{Ad}\bigl(\Psi_q((Y^{(13)}_{8})^{\varepsilon_3})^{\varepsilon_3}\bigr)\tau_{8,\varepsilon_3}
\mathrm{Ad}\bigl(\Psi_q((Y^{(14)}_{14})^{\varepsilon_4})^{\varepsilon_4}\bigr)\tau_{14,\varepsilon_4}
\sigma_{8,11},
\\
\widehat{R}_{124} &= 
\mathrm{Ad}\bigl(\Psi_q((Y^{(16)}_{10})^{\varepsilon_1})^{\varepsilon_1}\bigr)\tau_{10,\varepsilon_1}
\mathrm{Ad}\bigl(\Psi_q((Y^{(17)}_{15})^{\varepsilon_2})^{\varepsilon_2}\bigr)\tau_{15,\varepsilon_2}
\mathrm{Ad}\bigl(\Psi_q((Y^{(18)}_{4})^{\varepsilon_3})^{\varepsilon_3}\bigr)\tau_{4,\varepsilon_3}
\mathrm{Ad}\bigl(\Psi_q((Y^{(19)}_{10})^{\varepsilon_4})^{\varepsilon_4}\bigr)\tau_{10,\varepsilon_4}
\sigma_{4,15}.
\end{split}
\end{equation}

The quantum $y$-seeds 
$(B^{(t)\prime}, Y^{(t)\prime})\, (t=2,\ldots, 21)$ are determined  from the initial one 
$(B^{(1)\prime}, Y^{(1)\prime})$ by
\begin{equation}\label{b11R}
\begin{split}
&
(B^{(1)\prime}, \bY^{(1)\prime}) \underset{\varepsilon_1}{\overset{\mu_{14}}{\longleftrightarrow}} 
(B^{(2)\prime}, \bY^{(2)\prime}) \underset{\varepsilon_2}{\overset{\mu_{9}}{\longleftrightarrow}}
(B^{(3)\prime}, \bY^{(3)\prime}) \underset{\varepsilon_3}{\overset{\mu_{8}}{\longleftrightarrow}}
(B^{(4)\prime}, \bY^{(4)\prime}) \underset{\varepsilon_4}{\overset{\mu_{14}}{\longleftrightarrow}} 
(B^{(5)\prime}, \bY^{(5)\prime}) \overset{\sigma_{8,9}}{\longleftrightarrow} 
(B^{(6)\prime}, \bY^{(6)\prime}),
\\
&
(B^{(6)\prime}, \bY^{(6)\prime}) \underset{\varepsilon_1}{\overset{\mu_{10}}{\longleftrightarrow}}
(B^{(7)\prime}, \bY^{(7)\prime}) \underset{\varepsilon_2}{\overset{\mu_{5}}{\longleftrightarrow}}
(B^{(8)\prime}, \bY^{(8)\prime}) \underset{\varepsilon_3}{\overset{\mu_{4}}{\longleftrightarrow}}
(B^{(9)\prime}, \bY^{(9)\prime}) \underset{\varepsilon_4}{\overset{\mu_{10}}{\longleftrightarrow}} 
(B^{(10)\prime}, \bY^{(10)\prime}) \overset{\sigma_{4,5}}{\longleftrightarrow} 
(B^{(11)\prime}, \bY^{(11)\prime}),
\\
&
(B^{(11)\prime}, \bY^{(11)\prime}) \underset{\varepsilon_1}{\overset{\mu_{15}}{\longleftrightarrow}}
(B^{(12)\prime}, \bY^{(12)\prime}) \underset{\varepsilon_2}{\overset{\mu_{11}}{\longleftrightarrow}}
(B^{(13)\prime}, \bY^{(13)\prime}) \underset{\varepsilon_3}{\overset{\mu_{8}}{\longleftrightarrow}}
(B^{(14)\prime}, \bY^{(14)\prime}) \underset{\varepsilon_4}{\overset{\begin{color}{red}\mu_{15}\end{color}}{\longleftrightarrow}}
(B^{(15)\prime}, \bY^{(15)\prime}) \overset{\sigma_{8,11}}{\longleftrightarrow} 
(B^{(16)\prime}, \bY^{(16)\prime}),
\\
&
(B^{(16)\prime}, \bY^{(16)\prime}) \underset{\varepsilon_1}{\overset{\mu_{9}}{\longleftrightarrow} }
(B^{(17)\prime}, \bY^{(17)\prime}) \underset{\varepsilon_2}{\overset{\begin{color}{red}\mu_{5}\end{color}}{\longleftrightarrow} }
(B^{(18)\prime}, \bY^{(18)\prime}) \underset{\varepsilon_3}{\overset{\mu_{14}}{\longleftrightarrow} }
(B^{(19)\prime}, \bY^{(19)\prime}) \underset{\varepsilon_4}{\overset{\mu_{9}}{\longleftrightarrow} }
(B^{(20)\prime}, \bY^{(20)\prime}) \overset{\sigma_{5,14}}{\longleftrightarrow} 
(B^{(21)\prime}, \bY^{(21)\prime}).
\end{split}
\end{equation}
They correspond to the cluster transformations in the right path of Figure \ref{fig:rikisaku} as follows:
\begin{equation}\label{r456b}
\begin{split}
\widehat{R}_{124} &= 
\mathrm{Ad}\bigl(\Psi_q((Y^{(1)\prime}_{14})^{\varepsilon_1})^{\varepsilon_1}\bigr)\tau_{14,\varepsilon_1}
\mathrm{Ad}\bigl(\Psi_q((Y^{(2)\prime}_{9})^{\varepsilon_2})^{\varepsilon_2}\bigr)\tau_{9,\varepsilon_2}
\mathrm{Ad}\bigl(\Psi_q((Y^{(3)\prime}_{8})^{\varepsilon_3})^{\varepsilon_3}\bigr)\tau_{8,\varepsilon_3}
\mathrm{Ad}\bigl(\Psi_q((Y^{(4)\prime}_{14})^{\varepsilon_4})^{\varepsilon_4}\bigr)\tau_{14,\varepsilon_4}
\sigma_{8,9},
\\
\widehat{R}_{135} &= 
\mathrm{Ad}\bigl(\Psi_q((Y^{(6)\prime}_{10})^{\varepsilon_1})^{\varepsilon_1}\bigr)\tau_{10,\varepsilon_1}
\mathrm{Ad}\bigl(\Psi_q((Y^{(7)\prime}_{5})^{\varepsilon_2})^{\varepsilon_2}\bigr)\tau_{5,\varepsilon_2}
\mathrm{Ad}\bigl(\Psi_q((Y^{(8)\prime}_{4})^{\varepsilon_3})^{\varepsilon_3}\bigr)\tau_{4,\varepsilon_3}
\mathrm{Ad}\bigl(\Psi_q((Y^{(9)\prime}_{10})^{\varepsilon_4})^{\varepsilon_4}\bigr)\tau_{10,\varepsilon_4}
\sigma_{4,5},
\\
\widehat{R}_{236} &= 
\mathrm{Ad}\bigl(\Psi_q((Y^{(11)\prime}_{15})^{\varepsilon_1})^{\varepsilon_1}\bigr)\tau_{15,\varepsilon_1}
\mathrm{Ad}\bigl(\Psi_q((Y^{(12)\prime}_{11})^{\varepsilon_2})^{\varepsilon_2}\bigr)\tau_{11,\varepsilon_2}
\mathrm{Ad}\bigl(\Psi_q((Y^{(13)\prime}_{8})^{\varepsilon_3})^{\varepsilon_3}\bigr)\tau_{8,\varepsilon_3}
\mathrm{Ad}\bigl(\Psi_q((Y^{(14)\prime}_{15})^{\varepsilon_4})^{\varepsilon_4}\bigr)\tau_{15,\varepsilon_4}
\sigma_{8,11},
\\
\widehat{R}_{456} &= 
\mathrm{Ad}\bigl(\Psi_q((Y^{(16)\prime}_{9})^{\varepsilon_1})^{\varepsilon_1}\bigr)\tau_{9,\varepsilon_1}
\mathrm{Ad}\bigl(\Psi_q((Y^{(17)\prime}_{5})^{\varepsilon_2})^{\varepsilon_2}\bigr)\tau_{5,\varepsilon_2}
\mathrm{Ad}\bigl(\Psi_q((Y^{(18)\prime}_{14})^{\varepsilon_3})^{\varepsilon_3}\bigr)\tau_{14,\varepsilon_3}
\mathrm{Ad}\bigl(\Psi_q((Y^{(19)\prime}_{9})^{\varepsilon_4})^{\varepsilon_4}\bigr)\tau_{9,\varepsilon_4}
\sigma_{5,14}.
\end{split}
\end{equation}
Although the formulas (\ref{r124a}) and (\ref{r456b}) may appear distinct, 
they all signify the same transformation described in Proposition \ref{pr:RY}
for suitable subsets of Y-variables.
This fact justifies denoting them by the common symbol $\widehat{R}$.

\begin{remark}\label{re:red}
Consider the tropical $y$-seeds generated by the same mutation sequences from the initial one 
$(B^{(1)}, y^{(1)}) = (B^{(1)\prime}, y^{(1)\prime})$.
Suppose $y^{(1)}_i = y^{(1)\prime}_i$ is positive for all $i=1,\ldots, 16$.
Then the four mutations highlighted in red in (\ref{b21L}) and  (\ref{b11R})
are associated to a a negative tropical sign of the $y$-variable at the mutation point 
(the $y$-seed in the left), while the remaining ones are positive.
\end{remark}

Let us introduce the monomial parts of the cluster transformations  (\ref{r124a}) and (\ref{r456b}):
\begin{alignat*}{2}
\tau_{456   | \,  \varepsilon_1,\varepsilon_2,\varepsilon_3,\varepsilon_4}&:= 
\tau_{15,\varepsilon_1}\tau_{11,\varepsilon_2}\tau_{10,\varepsilon_3}\tau_{15,\varepsilon_4}\sigma_{10,11}:
\quad &\mathcal{Y}(B^{(6)}) &\rightarrow \mathcal{Y}(B^{(1)}),
\\
\tau_{236   | \,   \varepsilon_1,\varepsilon_2,\varepsilon_3,\varepsilon_4} &:=
\tau_{9,\varepsilon_1}\tau_{5,\varepsilon_2}\tau_{4,\varepsilon_3}\tau_{9,\varepsilon_4}\sigma_{4,5}:
&\mathcal{Y}(B^{(11)}) &\rightarrow \mathcal{Y}(B^{(6)}),
\\
\tau_{135   | \,   \varepsilon_1,\varepsilon_2,\varepsilon_3,\varepsilon_4} &:=
\tau_{14,\varepsilon_1}\tau_{11,\varepsilon_2}\tau_{8,\varepsilon_3}\tau_{14,\varepsilon_4}\sigma_{8,11}:
&\mathcal{Y}(B^{(16)}) &\rightarrow \mathcal{Y}(B^{(11)}),
\\
\tau_{124   | \,   \varepsilon_1,\varepsilon_2,\varepsilon_3,\varepsilon_4} &:=
\tau_{10,\varepsilon_1}\tau_{15,\varepsilon_2}\tau_{4,\varepsilon_3}\tau_{10,\varepsilon_4}\sigma_{4,15}:
&\mathcal{Y}(B^{(21)}) &\rightarrow \mathcal{Y}(B^{(16)}),
\\
\tau'_{124   | \,   \varepsilon_1,\varepsilon_2,\varepsilon_3,\varepsilon_4} &:=
\tau_{14,\varepsilon_1}\tau_{9,\varepsilon_2}\tau_{8,\varepsilon_3}\tau_{14,\varepsilon_4}\sigma_{8,9}:
\quad &\mathcal{Y}(B^{(6)\prime}) &\rightarrow \mathcal{Y}(B^{(1)\prime}),
\\
\tau'_{135   | \,   \varepsilon_1,\varepsilon_2,\varepsilon_3,\varepsilon_4} &:=
\tau_{10,\varepsilon_1}\tau_{5,\varepsilon_2}\tau_{4,\varepsilon_3}\tau_{10,\varepsilon_4}\sigma_{4,5}:
\quad &\mathcal{Y}(B^{(11)\prime}) &\rightarrow \mathcal{Y}(B^{(6)\prime}),
\\
\tau'_{236   | \,   \varepsilon_1,\varepsilon_2,\varepsilon_3,\varepsilon_4} &:=
\tau_{15,\varepsilon_1}\tau_{11,\varepsilon_2}\tau_{8,\varepsilon_3}\tau_{15,\varepsilon_4}\sigma_{8,11}:
\quad &\mathcal{Y}(B^{(16)\prime}) &\rightarrow \mathcal{Y}(B^{(11)\prime}),
\\
\tau'_{456   | \,   \varepsilon_1,\varepsilon_2,\varepsilon_3,\varepsilon_4} &:=
\tau_{9,\varepsilon_1}\tau_{5,\varepsilon_2}\tau_{14,\varepsilon_3}\tau_{9,\varepsilon_4}\sigma_{5,14}:
\quad &\mathcal{Y}(B^{(21)\prime}) &\rightarrow \mathcal{Y}(B^{(16)\prime}).
\displaybreak[0]
\end{alignat*}
The primes in $\tau'_{ijk   | \,   \varepsilon_1,\varepsilon_2,\varepsilon_3,\varepsilon_4}$ 
are added just for distinction.
The maps $\tau_{ijk   | \,   \varepsilon_1,\varepsilon_2,\varepsilon_3,\varepsilon_4}$ and 
$\tau'_{ijk   | \,   \varepsilon_1,\varepsilon_2,\varepsilon_3,\varepsilon_4}$ 
consistently adhere to $\tau_{\varepsilon_1,\varepsilon_2,\varepsilon_3,\varepsilon_4}$ in 
(\ref{taue})  with respect to the subset of Y-variables.

Now we are ready to explain monomial solutions to the tetrahedron equation.
Proposition \ref{pr:sy},  Figure \ref{fig:rikisaku} and Remark \ref{re:sgni} indicate 
the equality of the tropical $y$-variables $y^{(21)} = y^{(21)\prime}$ {\em provided} that 
all the signs associated with the monomial part of the mutation are chosen to be the tropical signs.
Considering Remark \ref{re:red} alongside, we find that 
\begin{equation}\label{ihte}
\tau_{456|\, + + + +} \tau_{236|\, + + + +}\tau_{135|\, + + + +}\tau_{124|\, - - + +}
=
\tau'_{124|\, + + + +}\tau'_{135|\, + + + +}\tau'_{236|\, + + + -}\tau'_{456|\, + - + +}
\end{equation}
is valid instead of the naive choice of $\tau_{ijk|\, ++++}$ and $\tau'_{ijk|\,++++}$ everywhere.
This is an inhomogeneous version of the tetrahedron equation, 
as the maps involved are not always uniform in their sign indices\footnote{Once 
separated from the companion automorphism part, the 
monomial parts do indeed depend on the chosen sign.}.
The coincident image of $Y^{(21)}_1,\ldots, Y^{(21)}_{16}$ 
by the two sides are  sign coherent monomials in the initial Y-variables 
$\bY^{(1)} = (Y_1,\ldots, Y_{16})$. 
Their explicit form is available in (\ref{y21}).

A natural question is whether there are monomial solutions to the tetrahedron equation
with the signs homogeneously chosen as $(\varepsilon_1,\varepsilon_2,\varepsilon_3,\varepsilon_4)$:
\begin{equation}\label{hteq}
\begin{split}
&\tau_{456|\, \varepsilon_1,\varepsilon_2,\varepsilon_3,\varepsilon_4} 
\tau_{236|\, \varepsilon_1,\varepsilon_2,\varepsilon_3,\varepsilon_4} 
\tau_{135|\, \varepsilon_1,\varepsilon_2,\varepsilon_3,\varepsilon_4} 
\tau_{124|\, \varepsilon_1,\varepsilon_2,\varepsilon_3,\varepsilon_4} 
\\
&=
\tau'_{124|\, \varepsilon_1,\varepsilon_2,\varepsilon_3,\varepsilon_4} 
\tau'_{135|\, \varepsilon_1,\varepsilon_2,\varepsilon_3,\varepsilon_4} 
\tau'_{236|\, \varepsilon_1,\varepsilon_2,\varepsilon_3,\varepsilon_4} 
\tau'_{456|\, \varepsilon_1,\varepsilon_2,\varepsilon_3,\varepsilon_4}.
\end{split}
\end{equation}
The answer is given by a direct calculation as follows.
\begin{proposition} \label{pr:mono}
The monomial part 
satisfies the tetrahedron equation \eqref{hteq} 
if and only if 
$\varepsilon_2=-\varepsilon_3 = -$, i.e., 
$(\varepsilon_1,\varepsilon_2,\varepsilon_3,\varepsilon_4) \in 
\{(+,-,+,+), (+,-,+,-), (-,-,+,+), (-,-,+,-)\}$.
\end{proposition}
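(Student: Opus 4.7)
The plan is to reduce the identity \eqref{hteq} to a finite, purely combinatorial comparison on Laurent monomials, using the already established inhomogeneous equation \eqref{ihte} as a reference point. Each monomial map $\tau_{k,\varepsilon}$ acts $\mathbb{Z}$-linearly on $c$-vectors via \eqref{eq:torus-iso}, up to overall $q$-powers that can be bookkept separately. Consequently, both sides of \eqref{hteq} are morphisms of quantum tori that send each generator $Y_i^{(21)}$ ($i=1,\ldots,16$) to a Laurent monomial in the initial variables $Y_1^{(1)},\ldots,Y_{16}^{(1)}$. Equality of these 16 monomials on the two sides is equivalent to \eqref{hteq}.

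The key observation is that \eqref{ihte} already verifies the desired identity when the signs are chosen to be the tropical signs at each mutation. By Proposition \ref{pr:sy}, Theorem \ref{thm:piv} and Remark \ref{re:red}, the only mutations whose tropical sign is $-$ are the four red ones in \eqref{b21L} and \eqref{b11R}. Therefore, passing from \eqref{ihte} to a homogeneous sign choice $(\varepsilon_1,\varepsilon_2,\varepsilon_3,\varepsilon_4)$ amounts to flipping the sign parameter at exactly these four red positions whenever the homogeneous value differs from $-$, and at all non-red positions whenever the homogeneous value differs from $+$.

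Next, I would compute the effect of a single sign flip. Using \eqref{eq:mono-iso}, the identity
\[
\tau_{k,+}(Y'_i)\cdot\tau_{k,-}(Y'_i)^{-1}=q^{b_{ik}^2}Y_k^{b_{ik}}\qquad (i\neq k)
\]
shows that flipping $\varepsilon$ at a mutation $\mu_k$ multiplies the image of each $Y'_i$ by an explicit monomial in $Y_k$, with exponent determined by the exchange matrix entry $b_{ik}$. Propagating this correction through the subsequent monomial maps via the linear action on $c$-vectors yields, for each of the 16 Y-variables, a correction monomial in $Y_1^{(1)},\ldots,Y_{16}^{(1)}$ whose exponent vector depends affinely on the sign flips performed. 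Writing each side of \eqref{hteq} as the corresponding side of \eqref{ihte} multiplied by such a correction, the equation reduces to matching 16 Laurent monomials parameterised by the four signs.

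The main obstacle is bookkeeping volume: one must track, for each of the 16 initial $Y$'s and for each of $2^4=16$ sign patterns, the exponent vectors produced by the two compositions. The computation is nonetheless finite and systematic, and the confinement of the nontrivial corrections to the four red positions keeps it tractable. Performing the enumeration, one expects the cancellation on both sides to force precisely $\varepsilon_2=-$ and $\varepsilon_3=+$, with $\varepsilon_1$ and $\varepsilon_4$ unconstrained, yielding the four patterns listed in the statement; each of the remaining twelve sign patterns can be ruled out by producing a single $Y_i^{(21)}$ on which the two sides disagree, for instance by inspecting the exponent of $Y_4^{(1)}$ or $Y_8^{(1)}$ in the image.
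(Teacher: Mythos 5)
Your proposal is essentially the paper's own argument: the paper establishes Proposition \ref{pr:mono} by a direct finite computation of the monomial images of the sixteen $Y^{(21)}_i$ under both sides of \eqref{hteq}, which is exactly what your reduction to exponent-vector bookkeeping amounts to. Your sign-flip identity $\tau_{k,+}(Y'_i)\,\tau_{k,-}(Y'_i)^{-1}=q^{b_{ik}^2}Y_k^{b_{ik}}$ (for $i\neq k$) is correct and gives a sensible way to organize that computation relative to the already-verified inhomogeneous identity \eqref{ihte}, though, like the paper, you stop short of actually displaying the enumeration that pins down $\varepsilon_2=-\varepsilon_3=-$.
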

Examples \ref{ex:1} and \ref{ex:15} describe the monomial parts  
$\tau_{--++}$ and $\tau_{+-+-}$ explicitly.
Analogous information is supplied for the remaining two cases in Appendix \ref{ap:sup}.

\subsection{Dilogarithm identities}\label{ss:did}

Now we turn to the dilogarithm identities that will be utilized later. 
Substitute (\ref{r124a}) and (\ref{r456b}) into the 
LHS and the RHS of (\ref{TE1}) respectively.
The result takes the form
\begin{equation}\label{psiu}
\begin{split}
&\mathrm{Ad}\left(
\Psi_q(U_1)^{\varepsilon_1} \cdots \Psi_q(U_4)^{\varepsilon_4}\right)
\tau_{456| \varepsilon_1,  \varepsilon_2, \varepsilon_3, \varepsilon_4}
\mathrm{Ad}\left(
\Psi_q(U_5)^{\varepsilon_1} \cdots \Psi_q(U_8)^{\varepsilon_4}\right)
\tau_{236| \varepsilon_1,  \varepsilon_2, \varepsilon_3, \varepsilon_4}
\\
&\times 
\mathrm{Ad}\left(
\Psi_q(U_9)^{\varepsilon_1} \cdots \Psi_q(U_{12})^{\varepsilon_4}\right)
\tau_{135 | \varepsilon_1,  \varepsilon_2, \varepsilon_3, \varepsilon_4}
\mathrm{Ad}\left(
\Psi_q(U_{13})^{\varepsilon_1} \cdots \Psi_q(U_{16})^{\varepsilon_4}\right)
\tau_{124| \varepsilon_1,  \varepsilon_2, \varepsilon_3, \varepsilon_4}
\\
&= 
\mathrm{Ad}\left(
\Psi_q(U'_1)^{\varepsilon_1} \cdots \Psi_q(U'_4)^{\varepsilon_4}\right)
\tau'_{124| \varepsilon_1,  \varepsilon_2, \varepsilon_3, \varepsilon_4}
\mathrm{Ad}\left(
\Psi_q(U'_5)^{\varepsilon_1} \cdots \Psi_q(U'_8)^{\varepsilon_4}\right)
\tau'_{135| \varepsilon_1,  \varepsilon_2, \varepsilon_3, \varepsilon_4}
\\
&\times 
\mathrm{Ad}\left(
\Psi_q(U'_9)^{\varepsilon_1} \cdots \Psi_q(U'_{12})^{\varepsilon_4}\right)
\tau'_{236| \varepsilon_1,  \varepsilon_2, \varepsilon_3, \varepsilon_4}
\mathrm{Ad}\left(
\Psi_q(U'_{13})^{\varepsilon_1} \cdots \Psi_q(U'_{16})^{\varepsilon_4}\right)
\tau'_{456| \varepsilon_1,  \varepsilon_2, \varepsilon_3, \varepsilon_4},
\end{split}
\end{equation}
where $U_t$ and $U'_t\, (t=1,\ldots, 16)$ denote the Y-variables depending
on  $(\varepsilon_1,  \varepsilon_2, \varepsilon_3, \varepsilon_4)$.
Pushing the monomial parts to the right brings (\ref{psiu}) into the form
\begin{equation}\label{zteq}
\begin{split}
&\mathrm{Ad}\bigl(\Psi_q(Z_1)^{\varepsilon_1}\cdots \Psi_q(Z_{16})^{\varepsilon_{4}}\bigr)
\tau_{456|\varepsilon_1,\varepsilon_2,\varepsilon_3,\varepsilon_4}
\tau_{236|\varepsilon_1,\varepsilon_2,\varepsilon_3,\varepsilon_4}
\tau_{135|\varepsilon_1,\varepsilon_2,\varepsilon_3,\varepsilon_4}
\tau_{124|\varepsilon_1,\varepsilon_2,\varepsilon_3,\varepsilon_4}
\\
&=
\mathrm{Ad}\bigl(\Psi_q(Z'_1)^{\varepsilon_1}\cdots \Psi_q(Z'_{16})^{\varepsilon_{4}}\bigr)
\tau'_{124|\varepsilon_1,\varepsilon_2,\varepsilon_3,\varepsilon_4}
\tau'_{135|\varepsilon_1,\varepsilon_2,\varepsilon_3,\varepsilon_4}
\tau'_{236|\varepsilon_1,\varepsilon_2,\varepsilon_3,\varepsilon_4}
\tau'_{456|\varepsilon_1,\varepsilon_2,\varepsilon_3,\varepsilon_4},
\end{split}
\end{equation}
where $Z_i$ and $Z'_i$ are monomials of $Y_1,\ldots, Y_{16}$ 
determined by 
\begin{align}\label{zu}
Z_i = \begin{cases} U_i & (i = 1,\ldots, 4),
\\
 \tau_{456|\varepsilon_1,\varepsilon_2,\varepsilon_3,\varepsilon_4}
 (U_i)  & (i=5,\ldots, 8),
\\
\tau_{456|\varepsilon_1,\varepsilon_2,\varepsilon_3,\varepsilon_4}
\tau_{236|\varepsilon_1,\varepsilon_2,\varepsilon_3,\varepsilon_4}
(U_i) & (i=9, \ldots, 12),
\\
\tau_{456|\varepsilon_1,\varepsilon_2,\varepsilon_3,\varepsilon_4}
\tau_{236|\varepsilon_1,\varepsilon_2,\varepsilon_3,\varepsilon_4}
\tau_{135|\varepsilon_1,\varepsilon_2,\varepsilon_3,\varepsilon_4}
(U_i) & (i=13, \ldots, 16).
\end{cases}
\end{align}
The elements $Z'_i$ are similarly determined from the 
RHS of (\ref{psiu}).
From (\ref{zteq}) and Proposition \ref{pr:mono} we deduce 
\begin{align}\label{adeq}
\mathrm{Ad}\bigl(\Psi_q(Z_1)^{\varepsilon_1}\cdots \Psi_q(Z_{16})^{\varepsilon_{4}}\bigr)
=\mathrm{Ad}\bigl(\Psi_q(Z'_1)^{\varepsilon_1}\cdots \Psi_q(Z'_{16})^{\varepsilon_{4}}\bigr)
\end{align}
for $(\varepsilon_1,\varepsilon_2,\varepsilon_3,\varepsilon_4) \in 
\{(+,-,+,+), (+,-,+,-), (-,-,+,+), (-,-,+,-)\}$.
Actually a stronger equality holds.

\begin{theorem}\label{th:dilog}
For $(\varepsilon_1,\varepsilon_2,\varepsilon_3,\varepsilon_4) \in  
\{(+,-,+,-), (-,-,+,+)\}$, 
the products of quantum dilogarithms within $\mathrm{Ad}$ in  \eqref{adeq}
are well-defined formal Laurent series in 
the eight Y-variables 
$Y_4$, $Y_5$, $Y_8$, $Y_9$, $Y_{10}$, $Y_{11}$, $Y_{14}$ and  $Y_{15}$.
Moreover they are equal, i.e., 
\begin{align}\label{pseq}
\Psi_q(Z_1)^{\varepsilon_1}\cdots \Psi_q(Z_{16})^{\varepsilon_{4}}
=
\Psi_q(Z'_1)^{\varepsilon_1}\cdots \Psi_q(Z'_{16})^{\varepsilon_{4}}.
\end{align}
\end{theorem}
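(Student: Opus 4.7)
The plan is to break the theorem into two stages: first, to establish that both products actually live in a suitable completion of the quantum torus on the eight stated variables, and second, to promote the already-established identity up to $\mathrm{Ad}$ to an exact identity.

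For the first stage, I would start from the explicit formulas (\ref{r124a}), (\ref{r456b}) and the recipe (\ref{zu}), combined with the explicit monomial maps $\tau_{+-+-}$ and $\tau_{--++}$ of Examples \ref{ex:1} and \ref{ex:15} (and the analogues collected in Appendix \ref{ap:sup}), to list each of the sixteen arguments $Z_i$ and $Z'_i$ as a Laurent monomial in the initial variables $Y_1, \ldots, Y_{16}$. A direct inspection then shows that for the two sign choices $(+,-,+,-)$ and $(-,-,+,+)$, only the eight ``interior'' variables $Y_4, Y_5, Y_8, Y_9, Y_{10}, Y_{11}, Y_{14}, Y_{15}$ appear with a nonzero exponent in every $Z_i$ and every $Z'_i$; the ``boundary'' variables $Y_1, Y_2, Y_3, Y_6, Y_7, Y_{12}, Y_{13}, Y_{16}$, which lie on wire endpoints and enter only through the central wire-products generalising Remark \ref{re:c1}, cancel out through the $\tau$-transports.

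Next I would verify sign coherence: each $Z_i$ is a tropical $y$-variable produced from the initial seed by the same mutation sequence, and Theorem \ref{thm:sign-coherence} guarantees that it is either positive or negative as an element of the tropical semifield on the eight internal variables. By the very design of the decomposition (\ref{mud}), the sign $\varepsilon_j$ attached to $\Psi_q(Z_i)^{\varepsilon_i}$ coincides with this tropical sign for the two admissible choices, so the expansions (\ref{expa}) all run in strictly positive powers of the internal variables (or uniformly in their inverses) and the product is a well-defined element of the formal completion of the quantum sub-torus on the eight Y-variables. This settles the first claim of the theorem.

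For the second stage, combining (\ref{zteq}) with Proposition \ref{pr:mono} (which does apply to both sign choices under consideration) immediately yields (\ref{adeq}), so
\[
X := \bigl(\Psi_q(Z_1)^{\varepsilon_1}\cdots \Psi_q(Z_{16})^{\varepsilon_4}\bigr)^{-1}
\bigl(\Psi_q(Z'_1)^{\varepsilon_1}\cdots \Psi_q(Z'_{16})^{\varepsilon_4}\bigr)
\]
is central in the relevant completed quantum torus. Since both factors already lie in the completed sub-torus on the eight internal variables $Y_4, Y_5, Y_8, Y_9, Y_{10}, Y_{11}, Y_{14}, Y_{15}$, so does $X$, and I would check directly from the exchange matrix of the quiver in Figure \ref{fig:rikisaku} that the skew form $\langle \cdot,\cdot\rangle$ is non-degenerate on this rank-$8$ sublattice. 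Hence the only central elements of this sub-torus are scalars, so $X$ is a scalar, and reading off the constant term from (\ref{expa}) on both sides (each product begins with $1$) forces $X = 1$, yielding (\ref{pseq}).

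The main obstacle I expect is the bookkeeping in stage one: listing the sixteen $Z_i$ and the sixteen $Z'_i$ for each of the two sign choices and confirming by hand that the boundary variables drop out and that the sign of each monomial matches $\varepsilon_j$. By contrast, the promotion from $\mathrm{Ad}$-equality to literal equality via centrality and non-degeneracy of the skew form on the internal sublattice is a clean, essentially formal step once stage one is in place.
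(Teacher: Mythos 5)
Your second stage is essentially the paper's argument: from \eqref{adeq} one concludes via a Kashaev--Nakanishi type centrality argument that the ratio of the two products is a constant depending only on $q$, and one then pins the constant down to $1$ by comparing constant terms. That part is fine, although your remark that ``each product begins with $1$'' quietly assumes what actually has to be checked, namely that $\mathbf{n}=(0,\ldots,0)$ is the \emph{only} solution of $p_1=\cdots=p_8=0$ in \eqref{pne}; since the $Z_i$ are not all positive monomials, nontrivial terms of the sixteen expansions could a priori multiply to a constant, and ruling this out is part of the work.

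The genuine gap is in your first stage. You argue well-definedness by claiming that each $Z_i$ is sign coherent and that the attached sign $\varepsilon_j$ coincides with its tropical sign for the two admissible choices. Both claims are false. The homogeneous sign choices $(-,-,+,+)$ and $(+,-,+,-)$ are \emph{not} the tropical signs: by Remark \ref{re:red} the tropical signs along these mutation sequences are the inhomogeneous pattern appearing in \eqref{ihte}, so Theorem \ref{thm:sign-coherence} does not apply to the $Z_i$ as you use it, and indeed the paper points out explicitly that $Z_{15}=Y_5^{-1}Y_8Y_{11}$ in \eqref{zdL} is not sign coherent. Moreover, even if each $Z_i$ were a purely positive or purely negative monomial, a product of sixteen series, some expanding in positive monomials and some in negative ones, is not automatically a well-defined formal Laurent series. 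The argument the paper actually needs (and gives) is the finiteness check: expand all sixteen $\Psi_q$'s with exponents $\mathbf{n}\in(\Z_{\ge 0})^{16}$, express the resulting monomial exponents $(p_1,\ldots,p_8)$ as the explicit linear forms \eqref{pne}, and verify that each fiber of this linear map over $\Z^8$ meets $(\Z_{\ge 0})^{16}$ in a finite set. This is exactly the step that distinguishes the two sign choices in the theorem from the other two cases of Proposition \ref{pr:mono}, $(+,-,+,+)$ and $(-,-,+,-)$, for which the paper notes the simple finiteness argument fails; any proof of well-definedness that does not see this distinction, as yours does not, cannot be correct as stated.
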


\begin{proof}
We show the claim for $(\varepsilon_1,\varepsilon_2,\varepsilon_3,\varepsilon_4) =(-,-,+,+)$.
The case $(+,-,+,-)$ is similar.
The data  
${\scriptsize\begin{pmatrix}
Z_1& \ldots &  Z_8 \\ Z_9 & \ldots & Z_{16} 
\end{pmatrix}}
$ for $(\varepsilon_1,\varepsilon_2,\varepsilon_3,\varepsilon_4)=(-,-,+,+)$ 
is given as follows:
\begin{align}\label{zdL}
{\scriptsize
\begin{pmatrix}
Y_{15}^{-1} & Y^{-1}_{11}Y^{-1}_{15} & Y_{10} &   qY_{10}Y_{11} & q^{-1}Y^{-1}_9Y^{-1}_{10}
& Y^{-1}_5Y^{-1}_9Y^{-1}_{10} & Y_4 & qY_4Y_5 
\\
qY^{-1}_{14}Y^{-1}_{15} & q^2Y^{-1}_{11}Y^{-1}_{14}Y^{-1}_{15}  
& Y_8Y_9Y_{10} & qY_8Y_9 Y_{10} Y_{11}  & Y^{-1}_{14} & qY^{-1}_5Y^{-1}_9 Y_{11} Y^{-1}_{14}
& Y^{-1}_5Y_8Y_{11} & qY_8Y_9
\end{pmatrix}.
}
\end{align}
Similarly 
${\scriptsize\begin{pmatrix}
Z'_1& \ldots &  Z'_8 \\ Z'_9 & \ldots & Z'_{16} 
\end{pmatrix}}
$ 
is given as follows:
\begin{align}\label{zdR}
{\scriptsize
\begin{pmatrix}
Y^{-1}_{14} & qY^{-1}_9Y^{-1}_{14} & Y_8 & qY_8Y_9 & Y^{-1}_9 Y^{-1}_{10}Y^{-1}_{14}
& qY^{-1}_5Y^{-1}_9Y^{-1}_{10}Y^{-1}_{14} & Y_4Y_8Y_9 & qY_4Y_5Y_8Y_9
\\
Y^{-1}_{15} &  qY^{-1}_{11}Y^{-1}_{15} & Y_{10} & qY_{10}Y_{11} & q^{-1}Y^{-1}_9Y^{-1}_{10}
& Y^{-1}_5Y^{-1}_9 Y^{-1}_{10} & Y_4 & qY_4Y_5
\end{pmatrix}.
}
\end{align}
Note that $Z_{15}$ in (\ref{zdL}) is not sign coherent.
In order to show the well-definedness of the LHS of (\ref{pseq}), 
expand the 16 $\Psi_q$'s via (\ref{expa}) with the summation 
variables $n_1, \ldots,n_{16}\in \Z_{\ge 0}$.
By using the $q$-commutativity of Y-variables, one can align each term of the expansion uniquely as
\begin{equation}\label{c16}
\begin{split}
&\Psi_q(Z_1)^{-1}\Psi_q(Z_2)^{-1}\Psi_q(Z_3)\Psi_q(Z_4) 
\cdots \Psi_q(Z_{13})^{-1}\Psi_q(Z_{14})^{-1}\Psi_q(Z_{15})\Psi_q(Z_{16})
 \\
 &= \sum_{{\bf n} \in (\Z_{\ge 0})^{16}}C({\bf n}) 
 Y_4^{p_1} Y_5^{p_2} Y_8^{p_3} Y_9^{p_4} Y_{10}^{p_5} Y_{11}^{p_6} Y_{14}^{p_7} Y_{15}^{p_8},
 \end{split}
 \end{equation}
where $C({\bf n})$ is a rational function of $q$ depending on ${\bf n}=(n_1, \ldots, n_{16})$.
The powers $p_i$'s are given by 
\begin{alignat}{2}
p_1 &= n_7+n_8, &  p_5 &= n_3+n_4-n_5-n_6+n_{11}+n_{12},
\nonumber\\
p_2 &= -n_6+n_8-n_{14}-n_{15}, & p_6 &= -n_2+n_4-n_{10}+n_{12}+n_{14}+n_{15},
\nonumber\\
p_3 &= n_{11} + n_{12} + n_{15} + n_{16}, & p_7 &= -n_9 - n_{10} - n_{13}-n_{14},
\nonumber\\
p_4 &= -n_5-n_6+n_{11}+n_{12}-n_{14}+n_{16}, \quad & p_8 &= -n_1-n_2-n_9-n_{10}.
\label{pne}
\end{alignat}
The series (\ref{c16}) is well-defined if the coefficient of the monomial 
$Y_4^{p_1} Y_5^{p_2} Y_8^{p_3} Y_9^{p_4} Y_{10}^{p_5} Y_{11}^{p_6} Y_{14}^{p_7} Y_{15}^{p_8}$
for any given $(p_1,\ldots, p_8) \in \Z^8$ is finite.
This is shown by checking that there are none or finitely many 
${\bf n} \in (\Z_{\ge 0})^{16}$ 
satisfying the eight equations (\ref{pne})\footnote{This simple argument does not hold for the other two cases 
$(\varepsilon_1,\varepsilon_2,\varepsilon_3,\varepsilon_4)=(+,-,+,+), (-,-,+,-)$ 
in Proposition \ref{pr:mono}.
A more detailed examination is required for them, which we skip in this paper.}. 
This is straightforward.
The well-definedness of the RHS is verified in the same manner.

Next we prove (\ref{pseq}).
From (\ref{adeq}) we know that $\mathrm{Ad}\bigl((\text{LHS})(\text{RHS})^{-1}\bigr)=1$.
From an argument similar to the proof of \cite[Th.3.5]{KN11}, 
it follows that $(\text{LHS})(\text{RHS})^{-1}= c$, where $c$ only depends on $q$. 
To determine $c$, we compare the constant terms contained in the LHS and the RHS.
For the LHS, one looks for ${\bf n} \in (\Z_{\ge 0})^{16}$ such that 
$p_1=\cdots = p_8=0$.
It is easy to see that ${\bf n}=(0,\ldots, 0)$ 
is the only solution indicating that the constant term of the LHS is 1. 
Similarly, the constant term of the RHS is found to be 1. Therefore $c=1$.
\end{proof}

\section{Realization in terms of $q$-Weyl algebra}\label{s:qw}

\subsection{Y-variables and $q$-Weyl algebra}

Hereafter we set $q=\e^\hbar$.
By $q$-Weyl algebra we mean an associative algebra generated by 
$U^{\pm 1}$ and $W^{\pm 1}$ obeying the relations
$U U^{-1} = U^{-1} U= W W^{-1} = W^{-1}W = 1$ and 
$UW = q^2 WU$.
To each crossing $i$ of the wiring diagram we introduce a complex parameter 
$\lambda_i$ and the canonical variables $u_i, w_i$ 
satisfying 
\begin{align}\label{uwh}
[u_i, w_j]= 2\hbar \delta_{ij}, \quad [u_i, u_j]=[w_i, w_j]=0.
\end{align}
Thus their exponentials obey the relations in a direct product of the $q$-Weyl algebra, e.g., 
$\e^{u_i} \e^{w_j} = q^{2\delta_{ij}}\e^{w_j}\e^{u_i}$.
Given a wiring diagram and the associated square quiver, we ``parameterize" the 
Y-variables by the graphical rule explained in Figure \ref{fig:para}.

\begin{figure}[H]
\begin{tikzpicture}
{\color{red}
\fill (2,1.0) circle(2pt) coordinate(crossing) node[above]{$i$};
}
\draw [-] (0,2) to [out = 0, in = 135] (crossing)[thick] ;
\draw [->] (crossing) to [out = -45, in = 180] (4,0)[thick] ;
\draw [-] (0,0) to [out = 0, in = -135] (crossing)[thick] ;
\draw [->] (crossing) to [out = 45, in = 180] (4,2)[thick] ;
\node (q1) at (1.05,1.75) {\color{blue}$\circ$};
\node (q2) at (2.95,1.75) {\color{blue}$\circ$};
\node (q3) at (2.95,0.25) {\color{blue}$\circ$};
\node (q4) at (1.05,0.25) {\color{blue}$\circ$};
{\color{blue}
\draw (3,1.9) circle(0pt) node[above]{$\e^{u_i+\lambda_i}$};
\draw (3,0.05) circle(0pt) node[below]{$\e^{w_i}$};
\draw (1.1,0.1) circle(0pt) node[below]{$\e^{-u_i}$};
\draw (1.3,1.9) circle(0pt) node[above]{$\e^{-w_i-\lambda_i}$};
\draw [->] (node cs:name=q1) --(node cs:name=q2)[thick] node[above]{};
\draw [->] (node cs:name=q2) --(node cs:name=q3) [thick] node[below] {};
\draw [->] (node cs:name=q3) --(node cs:name=q4) [thick] node[below] {};
\draw [->] (node cs:name=q4) --(node cs:name=q1)[thick]  node[above]{};
}
\end{tikzpicture}
\caption{Graphical rule to parametrize the Y-variables in terms of $q$-Weyl algebra generators.
A Y-variable situated at a vertex (blue circle) of the square quiver acquires factors 
$\e^{-w_i-\lambda_i}, \e^{u_i+\lambda_i}, \e^{-u_i}$ and $\e^{w_i}$ from the neighboring 
crossing $i$ (colored in red) of the wiring diagram 
 if the vertex is located in 
the NW, NE, SW and SE of $i$, respectively.
The ordering of these factors from different $i$'s (if any) is inconsequential 
due to the commutativity of the associated canonical variables.  }
\label{fig:para}
\end{figure}
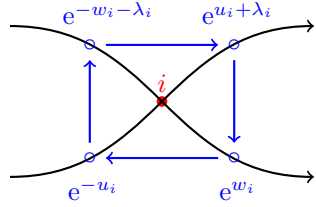

\noindent
From Figure \ref{fig:para} it is obvious that all the relations 
$Y_r Y_s = q^{2b_{rs}}Y_sY_r$ (\ref{eq:q-Y}) are satisfied under this parameterization.

To state the claim formally, 
let $\mathcal{W}_n$ be the direct product of $q$-Weyl algebras generated by 
$\e^{\pm u_i}, \e^{\pm w_i}$ for $i=1,\ldots, n$.
Let further $\mathcal{A}_n$ be the non-commuting fractional field of $\mathcal{W}_n$.
Then for $B$ corresponding to the left diagram in Figure \ref{fig:R123}, 
we have a morphism $\phi: \mathcal{Y}(B) \rightarrow  \mathcal{A}_3$, 
formally presented in the form $Y_r = \phi(Y_r)$, as follows:
\begin{align}\label{Yw}
\begin{matrix}
\vspace{0.2cm}
Y_1 = \e^{-w_2-\lambda_2},
&
Y_4 = \e^{u_1-u_2+\lambda_1},
&
Y_7 = \e^{-u_1}
\\
\vspace{0.2cm}
\!\!Y_2 = \e^{u_2+\lambda_2},
&\;
Y_5 = \e^{w_2-w_3-\lambda_3},\;\;
& \;
Y_8 = \e^{w_1-u_3},
\\
Y_3 = \e^{-w_1-\lambda_1},\;\;
&
Y_6 = \e^{u_3+\lambda_3},
&
Y_9 = \e^{w_3}.
\end{matrix}
\end{align}
Similarly for the right diagram of Figure \ref{fig:R123}, we have
\begin{align}\label{Ypw}
\begin{matrix}
\vspace{0.2cm}
Y'_1 = \e^{-w_3-\lambda_3}, \;\;
&
Y'_4= \e^{u_2-u_1+\lambda_2},
& 
Y'_7 = \e^{-u_2}, 
\\ \vspace{0.2cm}
Y'_2 = \e^{u_1+\lambda_1}, \;\;
&
Y'_5 = \e^{w_3-w_2-\lambda_2},\;\;
&
Y'_8 = \e^{u_3-w_1-\lambda_1+\lambda_3},
\\
Y'_3 = \e^{-u_3}, 
&
Y'_6 = \e^{w_1},
&
Y'_9 = \e^{w_2}.
\end{matrix}
\end{align}
For simplicity this will also be denoted as $\phi: \mathcal{Y}(B') \rightarrow  \mathcal{A}_3$
using the same symbol $\phi$, where $B'$ denotes the exchange matrix 
corresponding to the right diagram of Figure \ref{fig:R123}.
Note that the centers in Remark \ref{re:c2} take the values 
$\e^{-\lambda_2-\lambda_3}, \e^{-\lambda_1+\lambda_3}, \e^{\lambda_1+\lambda_2}$
in this parameterization.

\subsection{Extracting $R_{123}$ from $\widehat{R}_{123}$}

Let us illustrate the action of the monomial part 
$\tau_{\varepsilon_1, \varepsilon_2, \varepsilon_3, \varepsilon_4}$ 
 (\ref{taue}) of $\widehat{R}_{123}$ on the canonical variables for the case
 $(\varepsilon_1, \varepsilon_2, \varepsilon_3, \varepsilon_4) = (-,-,+,+)$.
 From Example \ref{ex:1} and (\ref{Yw})--(\ref{Ypw}),  we find that 
$\tau_{--++}$ is translated into a transformation $\tau^{uw}_{--++}$ 
of the canonical variables\footnote{$\tau^{uw}_{--++}$ is naturally regarded also as 
a transformation in $\mathcal{W}_3$ via exponentials.} as follows:
\begin{align}
\tau^{uw}_{--++}:\; 
\begin{cases}
u_1 \mapsto u_1+w_2-w_3+\lambda_2-\lambda_3, 
\quad \;
w_1  \mapsto w_1+ w_2-w_3,
\\
u_2  \mapsto u_1+u_3-w_1, 
\qquad \qquad \qquad 
w_2  \mapsto w_3,
\\
u_3  \mapsto -u_1+u_2+w_1, 
\qquad \qquad \quad \;
w_3  \mapsto w_2+\lambda_2-\lambda_3.
\end{cases}
\label{uwL1}
\end{align}
In order to realize it as an adjoint action, we introduce the group $N_n$ 
generated by 
\begin{align}
\e^{\pm \tfrac{1}{2\hbar}u_iw_j}\, (i \neq j),\quad 
\e^{\pm \tfrac{1}{2\hbar}u_iu_j}, 
\e^{\pm \tfrac{1}{2\hbar}w_iw_j}\, (i<j) ,\quad 
\e^{ \tfrac{a}{\hbar}u_i},\e^{ \tfrac{a}{\hbar}w_i}\, (a \in \C),\; b \in \C^\times
\end{align}
with $i,j\in \{1,\ldots, n\}$.
The multiplication is defined by (generalized) BCH formula and (\ref{uwh}), 
which is well defined due to the grading by $\hbar^{-1}$.
Let $\mathfrak{S}_n$ be the symmetric group generated by the transpositions 
$\rho_{ij} ~(i,j \in \{1,\ldots, n\})$.
It acts on $N_n$ via the adjoint action, inducing permutations of the indices of the canonical variables.
Thus one can form the semidirect product $N_n \rtimes \mathfrak{S}_n$, and 
let it act on $\mathcal{W}_n$ by the adjoint action.

Now the monomial part $\tau^{uw}_{--++}$ 
is described as the adjoint action as follows:
\begin{align}
\tau^{uw}_{--++} & = \mathrm{Ad}(P_{--++}),
\label{Pbch0}\\
P_{--++}&= 
\e^{\tfrac{1}{2\hbar}(u_1-w_1)(w_2-w_3)}
\rho_{23}\e^{\tfrac{\lambda_2-\lambda_3}{2\hbar}(u_3-w_1)}
\in N_3 \rtimes \mathfrak{S}_3,
\label{Pbch1}
\end{align}
where $\rho_{23}\e^{\tfrac{\lambda_2-\lambda_3}{2\hbar}(u_3-w_1)} = 
\e^{\tfrac{\lambda_2-\lambda_3}{2\hbar}(u_2-w_1)}\rho_{23}$ 
holds, since the parameters $\lambda_2$ and $\lambda_3$ are not exchanged.
Extending the indices and suppressing the sign choice of $--++$ in (\ref{Pbch0}) and (\ref{Pbch1}), we introduce
\begin{align}
\tau^{uw}_{ijk} &= \mathrm{Ad}(P_{ijk}),
\label{tijk}\\
P_{ijk} &= P(\lambda_i, \lambda_j, \lambda_k)_{ijk}
= \e^{\tfrac{1}{2\hbar}(u_i-w_i)(w_j-w_k)}\rho_{jk}
\e^{\tfrac{\lambda_j-\lambda_k}{2\hbar}(u_k-w_i)} \in N_6 \rtimes \mathfrak{S}_6,
\label{pijk} 
\end{align}
By a lengthy yet straightforward calculation using the BCH formula, one can prove
\begin{lemma}\label{le:tep}
$P_{ijk}$ satisfies the tetrahedron equation in $N_6 \rtimes \mathfrak{S}_6$ by itself:
\begin{align}\label{pte}
P_{456}P_{236}P_{135}P_{124} = 
P_{124}P_{135}P_{236}P_{456}.
\end{align}
\end{lemma}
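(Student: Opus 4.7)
Plan: I would prove (4.6) by a direct Baker--Campbell--Hausdorff computation in $N_6\rtimes\mathfrak{S}_6$. The Lie algebra of this group is generated by the $u_i,w_i$ together with their pairwise products, all of degree $\le 2$, with the bracket satisfying $[\text{deg}\le 2,\text{deg}\le 2]\subseteq \text{deg}\le 2$ and triple brackets of degree-two elements landing in the scalars. BCH expansions therefore terminate after at most two commutators, and every product of exponentials of the shape appearing in (4.5) admits a unique normal form $c\cdot e^{Q}\,e^{L}\,\rho$ with $Q$ a quadratic form in the $u_i,w_j$, $L$ a linear form whose coefficients depend on the $\lambda_i$, $\rho\in\mathfrak{S}_6$, and $c\in\C^\times$. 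Uniqueness of this form reduces (4.6) to four independent checks, one for each of $Q$, $L$, $\rho$, and $c$.

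First I would rewrite
\[
P_{ijk} \;=\; e^{\frac{1}{2\hbar}(u_i-w_i)(w_j-w_k)}\,e^{\frac{\lambda_j-\lambda_k}{2\hbar}(u_j-w_i)}\,\rho_{jk}
\]
by transporting $\rho_{jk}$ past the linear exponential (which just renames $u_k\mapsto u_j$, since $\rho_{jk}$ fixes $w_i$ for $i\neq j,k$). Each factor in (4.6) is then already in quadratic$\cdot$linear$\cdot$permutation order, and the full product is collapsed to normal form by iterating three elementary moves: (i) $\rho\,e^{X}=e^{\rho\cdot X}\rho$, to gather permutations on the right; (ii) $\mathrm{Ad}(e^{Q})L = L + [Q,L]/(2\hbar)$, to commute a quadratic past a linear (the series terminates after one bracket); and (iii) BCH for two quadratic exponentials, which after a single commutator produces a linear remainder absorbed into $L$ and, at worst, a scalar absorbed into $c$.

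The $\rho$-component is immediate: direct composition gives
\[
\rho_{56}\rho_{36}\rho_{35}\rho_{24} \;=\; \rho_{24}\rho_{35}\rho_{36}\rho_{56} \;=\; (2\,4)(3\,6) \;\in\; \mathfrak{S}_6.
\]
Agreement of the $Q$- and $L$-components can then be argued economically via Proposition \ref{pr:mono} with the sign choice $(\varepsilon_1,\varepsilon_2,\varepsilon_3,\varepsilon_4)=(-,-,+,+)$: that proposition forces the two orderings of $\tau_{ijk|--++}$ to define the same automorphism of the Y-variable skew field, and the parameterization (4.1)--(4.2) translates this into equality of $\mathrm{Ad}(P_{456}P_{236}P_{135}P_{124})$ and $\mathrm{Ad}(P_{124}P_{135}P_{236}P_{456})$ on every $u_i$ and $w_j$. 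Since $Q$ and $L$ in the normal form are entirely determined by the adjoint action on the canonical variables, this yields $Q_{\mathrm{LHS}}=Q_{\mathrm{RHS}}$ and $L_{\mathrm{LHS}}=L_{\mathrm{RHS}}$.

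The main obstacle is the scalar $c$: the adjoint-action shortcut leaves a potential $\C^\times$ discrepancy. I would close this gap by the explicit BCH bookkeeping indicated in the lemma, tracking the scalars produced at each of the moves (i)--(iii). None of these moves introduces a central extension, so the scalars accumulate transparently; the combinatorics is lengthy but wholly mechanical, and the structural reason the two sides match is the specific shape of (4.5), namely the pairing of the quadratic piece $(u_i-w_i)(w_j-w_k)$ with the linear piece $(u_k-w_i)$, which is tuned precisely so that the cross-commutators produced in the two orderings of four $P_{ijk}$'s balance out.
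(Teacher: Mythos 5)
The paper offers no written proof of this lemma beyond the single sentence that it follows from ``a lengthy yet straightforward calculation using the BCH formula,'' so your proposal is in the same spirit; but you add a genuine and worthwhile shortcut that the paper does not use. Reducing the quadratic and linear components to Proposition \ref{pr:mono} with $(\varepsilon_1,\varepsilon_2,\varepsilon_3,\varepsilon_4)=(-,-,+,+)$, transported through $\phi$ and the identification $\tau^{uw}_{ijk}=\mathrm{Ad}(P_{ijk})$ of (\ref{tijk}), is non-circular (Proposition \ref{pr:mono} is established in Section \ref{ss:ms} by an independent direct calculation) and correctly yields $\mathrm{Ad}(P_{456}P_{236}P_{135}P_{124})=\mathrm{Ad}(P_{124}P_{135}P_{236}P_{456})$, hence equality up to a scalar, since the kernel of $\mathrm{Ad}$ on $\mathcal{W}_6$ is $\C^\times$. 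One point you use silently and should state: equality of the two monomial maps on the sixteen $Y$-variables only controls the adjoint action on the image of $\phi$, so you need that the exponent lattice of $\phi(Y_1),\dots,\phi(Y_{16})$ has full rank $12$ in the span of $u_1,\dots,w_6$ (the paper makes the analogous rank remark in a footnote to the proof of Theorem \ref{th:main}). Your computation of the permutation component, $(2\,4)(3\,6)$ on both sides, is correct.

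The gap is in the structural claims underlying your normal form and your scalar bookkeeping. It is not true that BCH terminates after two commutators for degree-$\le 2$ elements of $\mathcal{W}_6$: the quadratics form, under $[\;,\;]/(2\hbar)$, a copy of a symplectic Lie algebra plus center (e.g.\ $[u_1u_2,\,w_1w_2]=2\hbar(u_1w_1+u_2w_2)+4\hbar^2$), which is not nilpotent, so the product of two quadratic exponentials is not ``a quadratic plus a linear remainder plus a scalar'' and the uniqueness of the form $c\,\e^{Q}\e^{L}\rho$ does not follow from termination. This matters precisely for the one step your $\mathrm{Ad}$-reduction does not cover, the scalar $c$: your move (iii) as described would miscount it. The fix is to exploit the special shape of the $Q$'s actually occurring, $(u_i-w_i)(w_j-w_k)$ with $i,j,k$ distinct, which are products of two commuting linear forms; for these, pairwise brackets are $2\hbar$ times quadratics that commute with both arguments (or vanish outright, e.g.\ $[Q_{135},Q_{124}]=0$), so the relevant BCH series do terminate and the scalars are finitely computable. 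With that restriction made explicit, your plan closes; as written, the justification of the scalar step rests on a false general statement about $N_6$.
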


Define $R_{123}= R(\lambda_1,\lambda_2,\lambda_3)_{123}$ by
\begin{align}
R_{123} 
&= 
\Psi_q(\e^{u_3-w_1})^{-1}
\Psi_q(\e^{u_3-w_1-w_2+w_3+\lambda_3})^{-1}
\Psi_q(\e^{u_1-u_2+\lambda_1})
\Psi_q(\e^{u_1-u_2+w_2-w_3+\lambda_1-\lambda_3})
P_{123} 
\label{RL0}
\\
&= \Psi_q(\e^{w_3-w_2+\lambda_3})^{-1}
\Psi_q(\e^{u_3-w_1})^{-1}
P_{123} \Psi_q(\e^{w_1-u_3+\lambda_1-\lambda_3})
\Psi_q(\e^{w_2-w_3+\lambda_2}),
\label{RL1}
\end{align}
where the equality of the two expressions can be checked easily 
from (\ref{uwL1}), (\ref{Pbch0}) and the pentagon identity (\ref{penta}).
Let $\widehat{R}^{uw}_{123}$ be the cluster transformation $\widehat{R}_{123}$ 
viewed as the one for the canonical variables.
Then from (\ref{dode0}),  (\ref{Rf1}), (\ref{Yw}) and (\ref{Ypw}),  we have 
\begin{align}
\widehat{R}^{uw}_{123} &= \mathrm{Ad}\bigl(R(\lambda_1,\lambda_2,\lambda_3)_{123}\bigr).
\end{align}
Formally these results may be stated as the commutativity of the diagrams:
\begin{align}\label{Rcom}
\begin{CD}
\mathcal{Y}(B') @> {\tau_{--++}}>> \mathcal{Y}(B) \\
@V{\phi}VV @ VV{\phi}V \\
\mathcal{A}_3 @ >{\tau^{uw}_{--++}}>> \mathcal{A}_3
\end{CD}
\qquad\qquad 
\begin{CD}
\mathcal{Y}(B') @> {\widehat{R}_{123}}>> \mathcal{Y}(B) \\
@V{\phi}VV @ VV{\phi}V \\
\mathcal{A}_3 @ >{\widehat{R}^{uw}_{123}}>> \mathcal{A}_3
\end{CD}
\end{align}

\begin{remark}\label{re:tuw}
For any sign $\varepsilon_1, \varepsilon_2, \varepsilon_3, \varepsilon_4$, there exists 
$\tau^{uw}_{\varepsilon_1, \varepsilon_2, \varepsilon_3, \varepsilon_4}$ which sends 
$u_i, w_i$ into a linear combinations of them and $\lambda_i$' s as in (\ref{uwL1}), 
and makes the following diagram commutative:
\begin{align}\label{Tcom}
\begin{CD}
\mathcal{Y}(B') @> {\tau_{\varepsilon_1, \varepsilon_2, \varepsilon_3, \varepsilon_4}}>> \mathcal{Y}(B) \\
@V{\phi}VV @ VV{\phi}V \\
\mathcal{A}_3 @ >{\tau^{uw}_{\varepsilon_1, \varepsilon_2, \varepsilon_3, \varepsilon_4}}>> \mathcal{A}_3
\end{CD}
\end{align}
Concrete forms of $\tau^{uw}_{\varepsilon_1, \varepsilon_2, \varepsilon_3, \varepsilon_4}$ are 
available in Appendix \ref{app:ruw} for the cases in Table \ref{tab:adt}.
\end{remark}

Extending (\ref{RL0}) and (\ref{RL1}), we introduce 
$R_{ijk}=R(\lambda_i, \lambda_j, \lambda_k)_{ijk}$ 
by
\begin{align}
R_{ijk} 
&= 
\Psi_q(\e^{u_k-w_i})^{-1}
\Psi_q(\e^{u_k-w_i-w_j+w_k+\lambda_k})^{-1}
\Psi_q(\e^{u_i-u_j+\lambda_i})
\Psi_q(\e^{u_i-u_j+w_j-w_k+\lambda_i-\lambda_k})
P_{ijk} 
\label{rijk0}\\
& = \Psi_q(\e^{w_k-w_j+\lambda_k})^{-1}
\Psi_q(\e^{u_k-w_i})^{-1}
P_{ijk}\Psi_q(\e^{w_i-u_k+\lambda_i-\lambda_k})
\Psi_q(\e^{w_j-w_k+\lambda_j}),
\label{rijk}
\end{align}
where $P_{ijk}$ is given by (\ref{pijk}).
Now we state the main result of the paper.
 
\begin{theorem}\label{th:main}
$R(\lambda_i, \lambda_j, \lambda_k)_{ijk} $ satisfies the tetrahedron equation:
\begin{equation}\label{tela}
\begin{split}
&\; R(\lambda_4, \lambda_5, \lambda_6)_{456}
R(\lambda_2, \lambda_3, \lambda_6)_{236}
R(\lambda_1, \lambda_3, \lambda_5)_{135}
R(\lambda_1, \lambda_2, \lambda_4)_{124}
\\
&\;= R(\lambda_1, \lambda_2, \lambda_4)_{124}
R(\lambda_1, \lambda_3, \lambda_5)_{135}
R(\lambda_2, \lambda_3, \lambda_6)_{236}
R(\lambda_4, \lambda_5, \lambda_6)_{456}.
\end{split}
\end{equation}
\end{theorem}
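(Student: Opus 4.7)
The strategy is to combine three previously established facts and transfer between the two parallel pictures (Y-variables and $q$-Weyl variables) via the realization $\phi$: (i) the cluster transformation $\widehat{R}$ already satisfies the tetrahedron equation (Proposition \ref{pr:sy}); (ii) the products of quantum dilogarithms arising in the two sides coincide as formal series (Theorem \ref{th:dilog}); (iii) the monomial cocycle $P_{ijk}$ satisfies the tetrahedron equation in $N_6 \rtimes \mathfrak{S}_6$ by itself (Lemma \ref{le:tep}).

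First, using the commutative diagrams (\ref{Rcom})--(\ref{Tcom}) and Proposition \ref{pr:sy}, I get $\widehat{R}^{uw}_{456}\widehat{R}^{uw}_{236}\widehat{R}^{uw}_{135}\widehat{R}^{uw}_{124}=\widehat{R}^{uw}_{124}\widehat{R}^{uw}_{135}\widehat{R}^{uw}_{236}\widehat{R}^{uw}_{456}$ as adjoint actions on $\mathcal{A}_6$. Since $\widehat{R}^{uw}_{ijk}=\mathrm{Ad}(R_{ijk})$, the two products in (\ref{tela}) can differ only by a central element, i.e.\ a scalar function of $q$ and $\lambda_1,\dots,\lambda_6$. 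The remainder of the argument is devoted to showing that this scalar is $1$.

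Next, I fix a sign pattern for which Theorem \ref{th:dilog} delivers the strict equality, say $(\varepsilon_1,\varepsilon_2,\varepsilon_3,\varepsilon_4)=(-,-,+,+)$, and I use the corresponding expression (\ref{rijk0}) for each $R_{ijk}$. In both sides of (\ref{tela}) I migrate all four monomial factors $P_{ijk}$ to the rightmost position via $P_{ijk}\,\Psi_q(X)\,P_{ijk}^{-1}=\Psi_q(\mathrm{Ad}(P_{ijk})X)$. The intertwining (\ref{Tcom}) guarantees that the resulting conjugated arguments are precisely the images under $\phi$ of the Y-variables $Z_i$ on the LHS and $Z'_i$ on the RHS defined by (\ref{zu}) and its analogue. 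After this rearrangement the LHS of (\ref{tela}) reads $\phi\bigl(\Psi_q(Z_1)^{-1}\Psi_q(Z_2)^{-1}\Psi_q(Z_3)\Psi_q(Z_4)\cdots\bigr)\cdot P_{456}P_{236}P_{135}P_{124}$, and similarly on the right with $Z'_i$ and $P_{124}P_{135}P_{236}P_{456}$.

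Finally, Lemma \ref{le:tep} equates the two accumulated monomial tails, and Theorem \ref{th:dilog} equates the two dilogarithm products term by term, giving (\ref{tela}) as an honest operator identity, not merely an identity modulo a central scalar. The only step that requires genuine care is matching the sixteen conjugated dilogarithm arguments on each side to the prescribed $\phi(Z_i)$ and $\phi(Z'_i)$; this is where the bulk of the bookkeeping lies, and it is purely mechanical once one uses (\ref{Tcom}) crossing-by-crossing together with the explicit monomial transformation $\tau^{uw}_{-,-,+,+}$ given in (\ref{uwL1}) and its analogues collected in Appendix \ref{app:ruw}.
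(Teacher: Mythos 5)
Your proposal is correct and is essentially the paper's own proof run in reverse: the paper starts from the dilogarithm identity (\ref{pseq}) in canonical variables, multiplies on the right by the equal products of $P$'s from Lemma \ref{le:tep}, and regroups into the $R$-products via the conjugation relation between the $\tilde U_i$ and $\tilde Z_i$, which is exactly your migration of the $P_{ijk}$ factors to the right. Your opening reduction via Proposition \ref{pr:sy} to ``equality up to a central scalar'' is harmless but superfluous, since your paragraphs three and four already yield the exact operator identity directly.
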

\begin{proof}
Consider the dilogarithm identity (\ref{pseq}) with
$(\varepsilon_1,\varepsilon_2,\varepsilon_3,\varepsilon_4)
=(-, -, +, +)$ in terms of canonical variables\footnote{The signs 
$\varepsilon_1,\varepsilon_2,\varepsilon_3,\varepsilon_4$ are 
actually $-, -, +, +$, but for clarity, they are left as they are.} 
\begin{align}\label{zt16}
\Psi_q(\tilde{Z}_1)^{\varepsilon_1}\cdots \Psi_q({\tilde Z}_{16})^{\varepsilon_4}
=
\Psi_q(\tilde{Z'}_1)^{\varepsilon_1}\cdots \Psi_q(\tilde{Z'}_{16})^{\varepsilon_4}.
\end{align}
Here $\tilde{Z}_i = \phi(Z_i)$ and $\tilde{Z'}_i = \phi(Z'_i)$ are given by 
(\ref{zdL}) and (\ref{zdR}) by further substituting the parameterization of $Y_i$'s in terms of 
the $q$-Weyl algebra generators $\e^{\pm u_j}, \e^{\pm w_j},\, (j=1,\ldots, 6)$
according to the rule in Figure \ref{fig:para} applied to the bottom diagram 
in Figure \ref{fig:rikisaku}\footnote{The map $\phi$ does not spoil the 
well-definedness of the expansion like (\ref{c16}) with 
respect to $\e^{u_i}$ and $\e^{w_i}$ since it preserves the rank of the quantum torus generated
by $Y_i (i=4,5,8,9,10,11,14,15)$.}.
Multiplication of (\ref{pte}) to (\ref{zt16}) from the right leads to
\begin{align}\label{zp16}
\Psi_q(\tilde{Z}_1)^{\varepsilon_1}\cdots \Psi_q({\tilde Z}_{16})^{\varepsilon_4}
P_{456}P_{236}P_{135}P_{124}
=
\Psi_q(\tilde{Z'}_1)^{\varepsilon_1}\cdots \Psi_q(\tilde{Z'}_{16})^{\varepsilon_4}
P_{124}P_{135}P_{236}P_{456}.
\end{align}
Let us consider the LHS. 
It is obviously equal to
\begin{equation}\label{tochu1}
\begin{split}
&\Psi_q(\tilde{Z}_1)^{\varepsilon_1}\cdots \Psi_q({\tilde Z}_{4})^{\varepsilon_4}
P_{456}
\\
&\times 
P^{-1}_{456}
\Psi_q(\tilde{Z}_5)^{\varepsilon_1}\cdots \Psi_q({\tilde Z}_{8})^{\varepsilon_4}
P_{456} \cdot P_{236}
\\
& \times 
P^{-1}_{236} P^{-1}_{456}
\Psi_q(\tilde{Z}_9)^{\varepsilon_1}\cdots \Psi_q({\tilde Z}_{12})^{\varepsilon_4}
P_{456} P_{236} \cdot P_{135}
\\
& \times 
P^{-1}_{135} P^{-1}_{236} P^{-1}_{456}
\Psi_q(\tilde{Z}_{13})^{\varepsilon_1}\cdots \Psi_q({\tilde Z}_{16})^{\varepsilon_4}
P_{456} P_{236} P_{135} \cdot P_{124}.
\end{split}
\end{equation}
On the other hand from (\ref{tijk}) and the image of $(\ref{zu})$ by $\phi$, we know
\begin{align}
{\tilde U}_i = \begin{cases}
\tilde{Z}_i & (i=1,\ldots, 4),
\\
P_{456}^{-1} \tilde{Z}_i P_{456} & (i=5,\ldots, 8),
\\
P_{236}^{-1}P_{456}^{-1} \tilde{Z}_i P_{456}P_{236} & (i=9,\ldots, 12),
\\
P_{135}^{-1}P_{236}^{-1}P_{456}^{-1} \tilde{Z}_i 
P_{456}P_{236} P_{135} & (i=13,\ldots, 16).
\end{cases}
\end{align}
where $\tilde{U}_i = \phi(U_i)$.
Thus (\ref{tochu1}) is cast into the form
\begin{equation}
\begin{split}
&\Psi_q(\tilde{U}_1)^{\varepsilon_1}\cdots \Psi_q({\tilde U}_{4})^{\varepsilon_4}
P_{456}
\Psi_q(\tilde{U}_5)^{\varepsilon_1}\cdots \Psi_q({\tilde U}_{8})^{\varepsilon_4}
P_{236}
\\
&\times 
\Psi_q(\tilde{U}_9)^{\varepsilon_1}\cdots \Psi_q({\tilde U}_{12})^{\varepsilon_4}
P_{135}
\Psi_q(\tilde{U}_{13})^{\varepsilon_1}\cdots \Psi_q({\tilde U}_{16})^{\varepsilon_4}
P_{124}.
\end{split}
\end{equation}
This is identified with the LHS of (\ref{tela}) for (\ref{rijk0}).
The RHS of (\ref{tela}) is similarly derived from that in (\ref{zp16}).
\end{proof}

By making slight adjustments to the conventions, 
the formulas  (\ref{RL1}) and  (\ref{Pbch1}) 
precisely reproduce the solution to the tetrahedron equation first established in 
\cite[eq.(1.37)]{S99}.
The combinations of quantum Y-variables (\ref{aldef}), when expressed in terms of 
the $q$-Weyl algebra generators, are essentially 
$\Lambda_1, \Lambda_2$ and $\Lambda_3$ in \cite[eq.(1.17)]{S99}.
The fact that the origin of this remarkable solution 
is finally clarified by the quantum cluster algebra theory 
nearly a quarter of a century after its discovery is of notable interest and significance.
This is especially so since it may suggest further results of a similar nature, offering
a unified perspective on the solutions known as 
the Zamolodchikov-Bazhanov-Baxter model \cite{Z81,B83,BB92} and 
those in \cite{KMY23}.
See the end of Section \ref{s:me} for a further comment.
As a related topic, we note an interpretation of a classical limit of the Lax operator in \cite{BS06} 
in terms of cluster algebra and perfect networks \cite{GSZ21}.

\subsection{Formulas for other signs}
The content in the previous subsection has been based on the sign chosen as
$(\varepsilon_1, \varepsilon_2, \varepsilon_3, \varepsilon_4)=(-,-,+,+)$. 
Let us argue the situation for other choices.
As remarked after (\ref{rhat}), the cluster transformation $\widehat{R}$ itself is 
independent of the signs.
Let us write the formulas in Appendix \ref{app:RY} symbolically as 
\begin{align}
\widehat{R} = \mathrm{Ad}\left(
\Psi_q(U_1)^{\varepsilon_1} 
\Psi_q(U_2)^{\varepsilon_2} 
\Psi_q(U_3)^{\varepsilon_3} 
\Psi_q(U_4)^{\varepsilon_4} \right)
\tau_{\varepsilon_1, \varepsilon_2, \varepsilon_3, \varepsilon_4},
\end{align}
where $\tau_{\varepsilon_1, \varepsilon_2, \varepsilon_3, \varepsilon_4}$ is defined in (\ref{taue}).
The arguments  $U_i$'s are 
$(\varepsilon_1, \varepsilon_2, \varepsilon_3, \varepsilon_4)$-dependent 
monomials in $Y_1,\ldots, Y_9$.
In terms of canonical variables, it reads as
 \begin{align}\label{rhuw}
\widehat{R}^{uw} = \mathrm{Ad}\left(
\Psi_q(\tilde{U}_1)^{\varepsilon_1} 
\Psi_q(\tilde{U}_2)^{\varepsilon_2} 
\Psi_q(\tilde{U}_3)^{\varepsilon_3} 
\Psi_q(\tilde{U}_4)^{\varepsilon_4} \right)
\tau^{uw}_{\varepsilon_1, \varepsilon_2, \varepsilon_3, \varepsilon_4},
\end{align}
where $\tilde{U}_i = \phi(U_i)$ with $\phi$ defined in (\ref{Yw}).

Our first question is 
when $\tau^{uw}_{\varepsilon_1, \varepsilon_2, \varepsilon_3, \varepsilon_4}$ here 
(see Remark \ref{re:tuw}) admits an adjoint action description similar to (\ref{pijk}).
We investigate it by slightly enlarging $\mathfrak{S}_3$ in (\ref{Pbch1}).
Let $\mathcal{S}_3$ be an automorphism group of $\mathcal{W}_3$ 
generated by permutations of the indices of canonical variables 
and the following transformations that preserve 
the canonical commutation relation (\ref{uwh})\footnote{It should be noted 
 that our exploration is not the most comprehensive, since $\mathcal{S}_3$  is a small subgroup of the infinite automorphism group of $\mathcal{W}_3$.}:
\begin{align}\label{pcr}
(u_i,w_i) \mapsto (u_i,w_i),\;  (w_i, -u_i), \; (-w_i, u_i), \; (-u_i,-w_i).
\end{align}
We represent an elements of $\mathcal{S}_3$ as 
$\bigl({u_1, u_2, u_3, w_1, w_2, w_3 \atop  -w_3, u_2, w_1, u_3, w_2, -u_1}\bigr)$ for example,
which indicates that the elements in the first row are transformed 
into the corresponding elements in the second row.
The group $\mathcal{S}_3$ contains  $\mathfrak{S}_3$ previously considered in (\ref{Pbch1}).
For example $\rho_{23} \in \mathfrak{S}_3$ is expressed as 
$\bigl({u_1, u_2, u_3, w_1, w_2, w_3 \atop  u_1, u_3, u_2, w_1, w_3, w_2}\bigr)$.
The order of $\mathcal{S}_3$ is $4^3\times 3!=384$. 
Now we seek the realization $\tau^{uw}_{\varepsilon_1, \varepsilon_2, \varepsilon_3, \varepsilon_4}$ 
as an adjoint action of the form
\begin{equation}\label{XL}
\begin{split}
&\tau^{uw}_{\varepsilon_1, \varepsilon_2, \varepsilon_3, \varepsilon_4} = 
\mathrm{Ad}(P_{\varepsilon_1, \varepsilon_2, \varepsilon_3, \varepsilon_4} ),
\quad 
P_{\varepsilon_1, \varepsilon_2, \varepsilon_3, \varepsilon_4}
=\e^{\tfrac{1}{2\hbar}X} \rho \, \e^{\tfrac{1}{2\hbar}\mathcal{L}}
\in N_3 \rtimes \mathcal{S}_3,
\\
&X = \sum_{i<j}(\alpha_{ij}u_iu_j+ \beta_{ij}w_iw_j)+\sum_{i\neq j}\gamma_{ij}u_iw_j,
\quad 
\mathcal{L} = \sum_i(\delta_i u_i + \delta'_i w_i), \quad 
\rho \in \mathcal{S}_3
\end{split}
\end{equation}
with $\alpha_{ij}, \beta_{ij}, \gamma_{ij}, \delta_i, \delta'_j \in \C$.
Through a direct calculation we find the set 
of signs $(\varepsilon_1, \varepsilon_2, \varepsilon_3, \varepsilon_4) $
which permits such a description.

\begin{proposition}\label{pr:tau}
The realization of $\tau^{uw}_{\varepsilon_1, \varepsilon_2, \varepsilon_3, \varepsilon_4}$
in the form \eqref{XL} 
is possible only for the 6 cases in Table \ref{tab:adt}.
\end{proposition}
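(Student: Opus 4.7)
The plan is to enumerate the sixteen choices of $(\varepsilon_1,\varepsilon_2,\varepsilon_3,\varepsilon_4)\in\{+,-\}^4$ and, for each one, compute $\tau^{uw}_{\varepsilon_1,\varepsilon_2,\varepsilon_3,\varepsilon_4}$ explicitly and then decide whether it admits the prescribed factorized form \eqref{XL}. The first step is purely mechanical: compose the four single-mutation tables of Section~\ref{s:ct} with $\sigma_{45}$, exactly as illustrated in Examples~\ref{ex:1} and \ref{ex:15}, and then translate the resulting monomial automorphism of $\mathcal{Y}(B')$ to an automorphism of $\mathcal{A}_3$ via the parameterization \eqref{Yw}--\eqref{Ypw}. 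Because each $Y'_i$ becomes a Laurent monomial in $Y_1,\ldots,Y_9$ with scalar coefficient, the induced map on canonical variables is always affine: each $u_i,w_i$ is sent to an integer linear combination of $u_1,w_1,\ldots,u_3,w_3$ plus a term linear in $\lambda_1,\lambda_2,\lambda_3$. In particular its linear part is an integer symplectic matrix.

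The point of \eqref{XL} is that, by the BCH formula, $\mathrm{Ad}(P_{\varepsilon_1,\varepsilon_2,\varepsilon_3,\varepsilon_4})$ decomposes into a scalar shift coming from $e^{\mathcal{L}/2\hbar}$, a signed permutation coming from $\rho\in\mathcal{S}_3$, and a linear symplectic transformation lying in $\exp(\mathfrak{sp}(6))$ coming from $e^{X/2\hbar}$ (using the standard isomorphism between quadratic forms in $u_i,w_i$ modulo scalars and $\mathfrak{sp}(6)$, where both sides have dimension $21$). Matching the scalar shifts by a suitable $\mathcal{L}$ imposes no obstruction, so realizability reduces to the question: \emph{for which sign tuples is there $\rho\in\mathcal{S}_3$ such that $\mathrm{Ad}(\rho)^{-1}$ composed with the linear part of $\tau^{uw}_{\varepsilon_1,\varepsilon_2,\varepsilon_3,\varepsilon_4}$ lies in $\exp(\mathfrak{sp}(6))$?}

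For each of the six cases listed in Table~\ref{tab:adt} I would verify realizability constructively by exhibiting explicit data $(X,\rho,\mathcal{L})$---this is the content of Appendix~\ref{app:ruw}---and checking $\mathrm{Ad}(P)=\tau^{uw}$ by a direct BCH computation. The element $\rho$ is forced by matching the discrete part of the linear action on $(u_i,w_i)$; once $\rho$ is fixed, the quadratic form $X$ is determined by matching infinitesimal generators through $[\,\cdot\,,\,\cdot\,]/2\hbar$, and $\mathcal{L}$ by matching the spectral-parameter shifts.

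The hardest step is the negative direction: proving that for each of the remaining ten sign tuples no $\rho\in\mathcal{S}_3$ works. Although $|\mathcal{S}_3|=4^3\cdot 3!=384$ is a priori large, $\mathrm{Ad}(\rho)$ must reproduce the discrete pattern of which generators appear in each image, leaving only a short list of candidate $\rho$ per sign tuple. For each candidate the residual $\mathrm{Ad}(\rho)^{-1}\circ(\text{linear part})$ is a concrete $6\times 6$ integer symplectic matrix, and one must show it does not lie in $\exp(\mathfrak{sp}(6))$. A convenient obstruction is the classical criterion that an element $g\in Sp(2n,\mathbb{R})$ is in $\exp(\mathfrak{sp}(2n,\mathbb{R}))$ iff its negative real eigenvalues are paired in equal-size Jordan blocks; applying this to each of the ten forbidden tuples, after enumerating the finitely many admissible $\rho$'s, completes the proof.
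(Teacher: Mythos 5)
Your overall architecture --- enumerate the sixteen sign tuples, compute each $\tau^{uw}_{\varepsilon_1,\varepsilon_2,\varepsilon_3,\varepsilon_4}$ as an affine symplectic map of the canonical variables, verify the six positive cases constructively, and rule out the rest --- is exactly the paper's (the paper offers no more than ``through a direct calculation''), and your observations that the translation part is absorbed by $\mathcal{L}$ and that $\rho$ is pinned down by the discrete pattern of the linear action are sound. The gap is in how you propose to carry out the negative direction. You reduce realizability to the condition that $\mathrm{Ad}(\rho)^{-1}$ composed with the linear part lie in $\exp(\mathfrak{sp}(6,\R))$, invoking the identification of \emph{all} quadratic forms with $\mathfrak{sp}(6)$ (dimension $21$). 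But in \eqref{XL} the quadratic form $X$ is restricted to the monomials $u_iu_j,\,w_iw_j\ (i<j)$ and $u_iw_j\ (i\neq j)$: the nine ``diagonal'' monomials $u_i^2,\,w_i^2,\,u_iw_i$ are excluded. The admissible $X$'s span only a $12$-dimensional subspace of $\mathfrak{sp}(6)$, and this subspace is not a Lie subalgebra (e.g.\ $[u_1u_2,w_1w_2]=2\hbar(u_1w_1+u_2w_2)+4\hbar^2$ produces the forbidden diagonal terms). So membership in $\exp(\mathfrak{sp}(6,\R))$ is necessary but very far from sufficient, and your proposed criterion does not characterize realizability in the form \eqref{XL}.

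This matters concretely for the ten excluded tuples: the Jordan-block obstruction for $\exp(\mathfrak{sp}(2n,\R))$ only bites when the residual matrix has negative real eigenvalues with unpaired blocks, and you give no reason to expect this. In fact, for the six realizable cases the residuals $e^{\mathrm{ad}(X)/2\hbar}$ are unipotent (the $X$'s in Table \ref{tab:adt} have nilpotent adjoint action), and the residuals in the remaining cases are integer symplectic matrices of the same general shape; it is entirely plausible that several of them are unipotent or have only positive eigenvalues and therefore pass your test while still failing to be $e^{\mathrm{ad}(X)/2\hbar}$ for a \emph{restricted} $X$. The correct negative argument has to be the one the paper implicitly performs: for each candidate $\rho$, write the $12$ unknown coefficients $\alpha_{ij},\beta_{ij},\gamma_{ij}$ into $e^{\mathrm{ad}(X)/2\hbar}$ and show the resulting polynomial system matching the given linear part is inconsistent (taking care that over $\C$ a matrix can have many logarithms, so one cannot simply compute one logarithm and inspect its diagonal blocks). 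As written, your proof of the ``only'' half of the proposition does not go through.
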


\begin{table}[H]
\begin{tabular}{c|ccc}
  & $\rho$ & $X$ & $\mathcal{L}$ 
\\ \hline \\
$P_{----}$
& $\bigl({u_1, u_2, u_3, w_1, w_2, w_3 \atop  u_2, u_3, w_1, w_2, w_3, -u_1}\bigr)$
& $u_2 w_1 - u_2 w_3 + w_1 w_3$ 
& $\lambda_1(-u_3+w_1+w_2-w_3)+\lambda_2(u_3-w_1)$
\\ 
$P_{--++}$
& $\phantom{\Biggl(}\bigl({u_1, u_2, u_3, w_1, w_2, w_3 \atop  u_1, u_3, u_2, w_1, w_3, w_2}\bigr)$
& $(u_1 - w_1) (w_2 - w_3)$
& $(\lambda_2-\lambda_3)(u_3-w_1)$
\\ 
$P_{-+-+}$
& $\phantom{\Biggl(}\bigl({u_1, u_2, u_3, w_1, w_2, w_3 \atop -w_3, u_2, w_1, u_3, w_2, -u_1}\bigr)$
& $(w_1 - u_3) (u_2 + w_2)$
& $\begin{matrix}\lambda_1(-u_3+w_1+w_2-w_3) + \lambda_2(u_3-w_1) \\
+ \lambda_3(u_1-u_2-u_3+w_1)\end{matrix}$
\\ 
$P_{+-+-}$
& $\phantom{\Biggl(}\bigl({u_1, u_2, u_3, w_1, w_2, w_3 \atop u_2, u_1, u_3, w_2, w_1, w_3}\bigr)$
& $(u_1 - u_2) (w_3 - u_3)$
& $(\lambda_1-\lambda_2)(w_1-u_3)$
\\
$P_{++--}$
& $\phantom{\Biggl(}\bigl({u_1, u_2, u_3, w_1, w_2, w_3 \atop -w_3, u_2, w_1, u_3, w_2, -u_1}\bigr)$
& $(w_1 - u_3) (u_2 + w_2)$
& $\begin{matrix}\lambda_1(-u_3+w_1+w_2-w_3) + \lambda_2(u_3-w_1) \\
+ \lambda_3(u_1-u_2-u_3+w_1)\end{matrix}$
\\
$P_{++++}$
& $\bigl({u_1, u_2, u_3, w_1, w_2, w_3 \atop  -w_3, u_1, u_2, u_3, w_1, w_2}\bigr)$
& $-u_1 u_3 + u_1 w_2 - u_3 w_2$ 
& $\lambda_2(u_3-w_1)+\lambda_3(u_1-u_2-u_3+w_1)$
\end{tabular}

\caption{
The list of the monomial parts of the cluster transformation $\widehat{R}$ 
which can be realized by the adjoint action of the form (\ref{XL}).
The results on $P_{-+-+}$ and $P_{++--}$ are identical since 
$\tau_{-+-+} = \tau_{++--}$ as mentioned after Example \ref{ex:15}.
The result (\ref{Pbch1}) agrees with $\tau^{uw}_{--++}$ here.}
\label{tab:adt}
\end{table}

\noindent
Formulas of $R_{\ve_1,\ve_2, \ve_3, \ve_4}$ for the cases listed in Table \ref{tab:adt}
are available in Appendix \ref{app:ruw}.

\begin{remark}\label{re:PP}
$P_{+-+-}$ is obtained from $P_{--++}$ by the change
$(u_i,w_i,\lambda_i,\hbar) \rightarrow (w_{4-i}, u_{4-i},\lambda_{4-i}, -\hbar)$,  
which keeps the canonical commutation relation (\ref{uwh}).
Using this fact one can prove that $P_{+-+-}$ also satisfies the tetrahedron equation (\ref{pte}) by itself
by attributing the claim to Lemma \ref{le:tep}.
With the exception of $P_{--++}$ and $P_{+-+-}$, the remaining 
$P_{\varepsilon_1, \varepsilon_2, \varepsilon_3, \varepsilon_4}$'s in Table \ref{tab:adt} do not satisfy the 
tetrahedron equation (\ref{pte}) by themselves\footnote{This fact is not contradictory to the argument
made in regard to the second question below.}.
\end{remark}

From Proposition \ref{pr:tau} and (\ref{rhuw}), it follows that 
 \begin{align}
\widehat{R}^{uw} &= \mathrm{Ad}\left(R_{\varepsilon_1, \varepsilon_2, \varepsilon_3, \varepsilon_4}\right),
\label{psR}
\\
R_{\varepsilon_1, \varepsilon_2, \varepsilon_3, \varepsilon_4} & := 
\Psi_q(\tilde{U}_1)^{\varepsilon_1} 
\Psi_q(\tilde{U}_2)^{\varepsilon_2} 
\Psi_q(\tilde{U}_3)^{\varepsilon_3} 
\Psi_q(\tilde{U}_4)^{\varepsilon_4}
P_{\varepsilon_1, \varepsilon_2, \varepsilon_3, \varepsilon_4},
\label{Ree}
\end{align}
for the cases in Table \ref{tab:adt}.
Our second question concerns the dependence 
of $R_{\varepsilon_1, \varepsilon_2, \varepsilon_3, \varepsilon_4}$
on $\varepsilon_1, \varepsilon_2, \varepsilon_3, \varepsilon_4$,
which is non-trivial for the objects within $\Ad (\;\,)$. 
Leaving the most general setting aside, we investigate it here for a specific representation on $V^{\otimes 3}$ 
which is defined in subsection \ref{ss:idr}.
It is easy to see that the property $\mathrm{Ad}(\Xi) =1$ for $\Xi \in \mathrm{End}(V^{\otimes 3})$
implies $\Xi= \mathrm{const} \times \mathrm{id}$.   
From (\ref{psR}) it follows that 
$\mathrm{Ad}(R_{\varepsilon_1, \varepsilon_2, \varepsilon_3, \varepsilon_4}
R_{\varepsilon'_1, \varepsilon'_2, \varepsilon'_3, \varepsilon'_4}^{-1})=1$ 
for any pair of signs 
in Table \ref{tab:adt}.
Therefore 
$R_{\varepsilon_1, \varepsilon_2, \varepsilon_3, \varepsilon_4}$ 
is independent of $\varepsilon_1, \varepsilon_2, \varepsilon_3, \varepsilon_4$ 
up to normalization
as long as it makes sense as a formal Laurent series.

\section{Matrix elements}\label{s:me}

\subsection{An infinite dimensional representation}\label{ss:idr}
Let $V = \bigoplus_{m \in \Z} \C |m\rangle$ and 
$V^\ast = \bigoplus_{m \in \Z} \C \langle m|$ be the left and the right modules over 
the $q$-Weyl algebra such that 
\begin{align}
\e^u | m\rangle = |m-1\rangle, \quad  \e^w| m \rangle = q^{2m}| m\rangle, \quad 
\langle m | \e^u  =  \langle m+1|, \quad  \langle m | \e^w  = \langle  m| q^{2m}.
\end{align}
We set the dual pairing of $V^\ast$ and $V$ as $\langle m | m'\rangle = \delta_{m,m'}$, which satisfies 
$\langle m | (g|m'\rangle) = (\langle m | g) |m'\rangle$.
Bases of $V^{\otimes 3}$ and $V^{\ast \otimes 3}$ will be denoted by $|m_1,m_2, m_3\rangle$ and 
$\langle m_1, m_2, m_3 |$, respectively $(m_i \in \Z)$.
We regard $R(\lambda_1, \lambda_2, \lambda_3)_{123}$ in (\ref{RL1}) as an element of $\mathrm{End}(V^{\otimes 3})$ 
and calculate its matrix element
$R^{a,b,c}_{i,j,k} := \langle a,b,c| R(\lambda_1, \lambda_2, \lambda_3)_{123}|i,j,k\rangle$.
We set $\kappa_i = \e^{\lambda_i}$, and obtain 
\begin{align}
R^{a,b,c}_{i,j,k} &= \langle a,b,c| 
\Psi_q(\e^{w_3-w_2+\lambda_3})^{-1}
\Psi_q(\e^{u_3-w_1})^{-1}
P_{123} \Psi_q(\e^{w_1-u_3+\lambda_1-\lambda_3})
\Psi_q(\e^{w_2-w_3+\lambda_2})
|i,j,k\rangle
\nonumber\\
&=
\Psi_q(\kappa_3q^{2c-2b})^{-1}\Psi_q(\kappa_2q^{2j-2k})
\langle a,b,c| \Psi_q(q^{-2a}\e^{u_3})^{-1}P_{123} \Psi_q(\tfrac{\kappa_1}{\kappa_3}q^{2i}\e^{-u_3})|i,j,k\rangle
\nonumber \\
&= \Psi_q(\kappa_3q^{2c-2b})^{-1}\Psi_q(\kappa_2q^{2j-2k})
\sum_{m,n \ge 0}\frac{q^{n^2-2na}(-\tfrac{\kappa_1}{\kappa_3}q^{2i+1})^m}{(q^2)_n(q^2)_m}
\langle a,b,c+n | P_{123}|i,j,k+m\rangle,
\label{mahi}
\end{align}
where (\ref{expa}) has been used and $(q^2;q^2)_m$ is written as $(q^2)_m$ for short.
Note that $1/(q^2)_m=0$ for $m  \in \Z_{\le -1}$ by the definition.
In order to calculate the elements of $P_{123}$, we assume that 
$r := \frac{\lambda_2-\lambda_3}{2\hbar} \in \Z$.
Then from (\ref{Pbch1}) we have
\begin{align}
\langle a,b,c | P_{123}|i,j,k\rangle
&= 
\langle a,b,c |\e^{(u_1-w_1)(b-c)}
\rho_{23}\e^{r(u_3-w_1)}|i,j,k\rangle
\nonumber\\
&= 
\langle a,b,c |q^{(b-c)^2} \e^{(b-c)u_1}\e^{-(b-c)w_1} \rho_{23}q^{-2ri}|i,j,k-r\rangle
\nonumber\\
&= 
\langle a+b-c,b,c |i,k-r,j\rangle q^{(b-c)^2-2i(b-c)-2ri}
\nonumber\\
&
= \delta^{a+b}_{i+j}\delta^{b+r}_k \delta^c_j q^{(b-c)(b-c-2i)-2ri},
\end{align}
where $q=\e^\hbar$ has been used.
Applying this result to (\ref{mahi}) we obtain
\begin{align}
R^{a,b,c}_{i,j,k} &= 
\delta^{a+b}_{i+j} \left(\frac{\kappa_1}{\kappa_3}\right)^{b+r-k}q^\alpha
\frac{(-\kappa_3q^{2c-2b+1})_\infty(q^{2+2j-2c})_\infty}{(q^2)_{\infty}
(-\kappa_2q^{2j-2k+1})_\infty(\tfrac{\kappa_3}{\kappa_2}q^{2k-2b})_{b+r-k}},
\label{rqq}
\\
\alpha &= (c - k + r) (c + k - r - 2 b) + 2 i (c - k).
\end{align}
To get (\ref{rqq}), we have applied $(q^2)_m = (-1)^m q^{m(m+1)}(q^{-2m})_m$ for $m=b+r-k$.
One observes some similarity between it and (\ref{pppp}).

\subsection{Modular double $R$}\label{ss:md}

This subsection is an informal review of a part of the results on the $R$ matrix $R(\lambda_1, \lambda_2, \lambda_3)_{123}$ 
in the modular double setting due to \cite{S10}.
We use a parameter $\bb$ and set 
\begin{align}\label{qq}
\hbar = i\pi \bb^2,\quad q= \e^{i \pi \bb^2}, \quad \bar{q} = \e^{-i \pi \bb^{-2}},
\quad \eta = \frac{\bb+\bb^{-1}}{2}.
\end{align}
The non-compact quantum dilogarithm is defined by 
\begin{align}\label{ncq}
\Phi_\bb(z) = \exp\left(
\frac{1}{4}\int \frac{\e^{-2izw}}{\sinh(w \bb)\sinh(w/\bb)}\frac{dw}{w}\right)
= \frac{(\e^{2\pi(z+i\eta)\bb};q^2)_\infty}{(\e^{2\pi(z-i\eta)\bb^{-1}};\bar{q}^2)_\infty},
\end{align}
where the singularity at $w=0$ is circled from above.
The infinite product formula is valid in the so-called strong coupling regime $0<\eta < 1$.
It enjoys the symmetry 
$\Phi_\bb(z) = \Phi_{\bb^{-1}}(z)$, and  
satisfies the following relations:
\begin{align}
\Phi_\bb(z)\Phi_\bb(-z) &= \e^{i \pi z^2-i \pi (1-2\eta^2)/6},
\label{pinv}\\
\frac{\Phi_\bb(z-i \bb^{\pm 1}/2)}{\Phi_\bb(z+i \bb^{\pm 1}/2)}
&= 1+\e^{2\pi z \bb^{\pm1}}.
\label{prec}
\end{align}

Recall that  $u_j, w_j\,(j=1,2,3)$ are the canonical variables in the previous section obeying (\ref{uwh}).
Here we work with the new canonical variables $\hat{x}_j,  \hat{p}_j$ and parameters $\lambda_j$ obtained by rescaling 
the previous $u_j, w_j$ and $\lambda_j$ as
\begin{align}
u_k \rightarrow  2\pi \bb \hat{x}_k,\quad
w_k \rightarrow  2\pi \bb \hat{p}_k,
\quad [\hat{x}_j, \hat{p}_k] = \frac{i}{2\pi} \delta_{j,k},
\quad \lambda_j \rightarrow 2\pi \bb \lambda_j.
\end{align}
We write  $\lambda_{j,k}= \lambda_j-\lambda_k$ for short.
By the modular double $R$ (in the ``momentum" representation),
we mean $R(\lambda_1, \lambda_2, \lambda_3)_{123}$
in the representation on $L^2(\R^3)$ such that 
\begin{enumerate}
\item[(i)] $\hat{p}_j$ acts as a multiplication by $p_j$, 
\item[(ii)] $\hat{x}_j$ acts as $\tfrac{i}{2\pi}\tfrac{\partial}{\partial p_j}$,
\item[(iii)] symmetry under the exchange $\bb \leftrightarrow   \bb^{-1}$ is implemented.
\end{enumerate}
In view of 
\begin{align}
\frac{\Psi_q(\e^{2\pi \bb(z+ i\bb/2)})}
{\Psi_q(\e^{2\pi \bb(z- i\bb/2)})}
= \frac{\Phi_\bb(z-i \bb/2)}{\Phi_\bb(z+i \bb/2)},
\end{align}
the modular double $R$ is obtained formally by replacing $\Psi_q(\e^{2\pi \bb\hat{z}})$ by 
$\Phi_\bb(\hat{z})^{-1}$  in $R(\lambda_1, \lambda_2, \lambda_3)_{123}$.
We adopt the formula (\ref{RL1}) and (\ref{pijk}), which lead to 
\begin{align}
 \mathscr{R}_{123}
 &=
\Phi_\bb(\hat{p}_3-\hat{p}_2+\lambda_3)
\Phi_\bb(\hat{x}_3-\hat{p}_1) \,\mathscr{P}_{123}\,
\Phi_\bb(\hat{p}_1-\hat{x}_3+\lambda_{1,3})^{-1}
\Phi_\bb(\hat{p}_2-\hat{p}_3+\lambda_2)^{-1},
\label{RRf}
\\
\mathscr{P}_{123} &= \e^{2\pi i(\hat{x}_1-\hat{p}_1)(\hat{p}_3-\hat{p}_2)} \rho_{23}
\e^{2\pi i\lambda_{2,3}(\hat{p}_1-\hat{x}_3)}.
\label{PPf}
 \end{align}
 The modular double $\mathscr{R}_{123} = \mathscr{R}(\lambda_1,\lambda_2, \lambda_3)_{123}$ 
 is an integral operator
 acting on functions of $(p_1,p_2,p_3)$ from $L^2(\R^3)$
 satisfying the tetrahedron equation (\ref{tela}). 
 To describe its kernel,  we introduce the quantum mechanical notation
 in the momentum ($\hat{p}$-diagonal) representation $(p_j, p'_j  \in \R)$:
 \begin{align*}
 &\hat{p}_j  |p_j\rangle = p_j  |p_j\rangle,\quad \langle p_j|\hat{p}_j = \langle p_j|p_j,
 \quad \langle p_j | p'_j \rangle = \delta(p_j-p'_j),
 \\
& |p_1,p_2,p_3\rangle  = |p_1\rangle \otimes  |p_2\rangle \otimes  |p_3\rangle,
 \quad 
  \langle p_1,p_2,p_3 | = \langle p_1| \otimes  \langle p_2 | \otimes  \langle p_3|.
 \end{align*}
  We will also use the coordinate states 
 $\langle x| = \int dp \langle p| \e^{2\pi i xp}$ and 
 $|x\rangle = \int dp \,\e^{-2\pi i xp}|p\rangle$ with 
 $\langle x | x'\rangle = \delta(x-x')$,
where the integrals here and in what follows are always taken over the real line $\R$.
To simplify the notation, 
the difference between $|p\rangle$ and $|x\rangle$, for instance, 
should be inferred from the symbols themselves. 
Thus for example we have 
\begin{align}\label{qm}
\langle x | p \rangle = \langle p | x \rangle^\ast = \e^{2\pi i x p},\quad 
\langle p | \e^{2\pi i \alpha \hat{x}} = \langle p-\alpha|,
\quad \langle x | \e^{2\pi i \alpha \hat{p}} = \langle x+\alpha|.
\end{align}

The following result was originally presented in \cite{S10}. 
Interestingly, it takes the form of ``$\Phi_\bb$-analogue of the cross ratio".
Here we furnish our derivation.
 \begin{proposition}\label{pr:ker}
 The integral kernel of $\mathscr{R}_{123}$ is given as follows:
 \begin{align}
& \langle p_1, p_2, p_3 | \mathscr{R}_{123} |p'_1, p'_2, p'_3\rangle 
= \varrho_p \,\delta(p_1+p_2-p'_1-p'_2) \e^{2\pi i \phi_p}
\frac{\Phi_\bb(p_3-p_2+\lambda_3) \Phi_\bb(p'_3-p'_2-\lambda_2) }
{\Phi_\bb(p_3-p'_2-i \eta) \Phi_\bb(p'_3-p_2-\lambda_{2,3}-i\eta) },
\label{pppp}\\
&\phi_p = p_1(p'_3-p_3)+\lambda_2(p_2-p'_2)+\lambda_1(p'_3-p_2)+i\eta(p'_2-p_3),
\quad 
\varrho_p = \e^{\pi i (-\eta^2-\lambda_3^2 -2\lambda_1\lambda_2+2\lambda_1\lambda_3)}.
\label{ppff}
 \end{align}
 \end{proposition}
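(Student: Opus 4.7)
The plan is to sandwich (\ref{RRf})--(\ref{PPf}) between $\langle p_1,p_2,p_3|$ and $|p'_1,p'_2,p'_3\rangle$ and systematically dispose of the five operator factors: the two purely $\hat{p}$-dependent dilogarithms become scalar multiplications, the monomial block $\mathscr{P}_{123}$ contributes delta functions and quadratic phases, and the two remaining dilogarithms that mix $\hat{x}_3$ with $\hat{p}_1$ are linearized by inserting a resolution of identity in the $x_3$-basis, reducing the whole computation to a single Ramanujan-type Fourier integral of $\Phi_\bb$.

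First I would compute the kernel of $\mathscr{P}_{123}$. Acting on $|p'_1,p'_2,p'_3\rangle$: the rightmost exponential yields $\e^{2\pi i\lambda_{2,3}p'_1}|p'_1,p'_2,p'_3-\lambda_{2,3}\rangle$ via (\ref{qm}); the transposition $\rho_{23}$ swaps the last two entries; and the central exponential, after a BCH decomposition $\e^{2\pi i(\hat{x}_1-\hat{p}_1)\beta}=\e^{-\pi i\beta^2}\e^{2\pi i\beta\hat{x}_1}\e^{-2\pi i\beta\hat{p}_1}$ with $\beta=p'_2-p'_3+\lambda_{2,3}$ (the eigenvalue of $\hat{p}_3-\hat{p}_2$ on the intermediate ket), shifts the first entry by $\beta$ and produces a further quadratic phase. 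Pairing with $\langle p_1,p_2,p_3|$ gives three delta functions enforcing $p_1=p'_1+\beta$, $p_2=p'_3-\lambda_{2,3}$, $p_3=p'_2$, and in particular $p_1+p_2=p'_1+p'_2$.

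The two momentum-diagonal dilogarithms in (\ref{RRf}) now multiply in as $\Phi_\bb(p_3-p_2+\lambda_3)$ on the left and $\Phi_\bb(p'_2-p'_3+\lambda_2)^{-1}$ on the right, the latter rewritten via the inversion (\ref{pinv}) as $\Phi_\bb(p'_3-p'_2-\lambda_2)$ up to a quadratic phase. To treat the two remaining factors $\Phi_\bb(\hat{x}_3-\hat{p}_1)$ (to the left of $\mathscr{P}_{123}$) and $\Phi_\bb(\hat{p}_1-\hat{x}_3+\lambda_{1,3})^{-1}$ (to the right), I would insert $1=\int dx\,|x\rangle\langle x|$ in the third slot on each side of $\mathscr{P}_{123}$, so that each $\hat{x}_3$ becomes a $c$-number. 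The Fourier factors $\langle p_3|x\rangle=\e^{-2\pi ip_3x}$ and $\langle\tilde x|p'_3-\lambda_{2,3}\rangle=\e^{2\pi i\tilde x(p'_3-\lambda_{2,3})}$ combine with the dilogarithms and with the delta-function constraints from $\mathscr{P}_{123}$ to leave a single integral of the shape
\begin{equation*}
\int dx\,\e^{2\pi iAx}\,\frac{\Phi_\bb(x+a)}{\Phi_\bb(x+b)},
\end{equation*}
with $A,a,b$ linear in $p_i,p'_i,\lambda_i,\eta$. The Ramanujan-type identity for $\Phi_\bb$ collected in Appendix \ref{ap:nc} evaluates this to a ratio of two $\Phi_\bb$'s whose arguments precisely reproduce the factors $\Phi_\bb(p_3-p'_2-i\eta)^{-1}$ and $\Phi_\bb(p'_3-p_2-\lambda_{2,3}-i\eta)^{-1}$ in (\ref{pppp}).

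All that remains is phase bookkeeping: collect the quadratic phases arising from the BCH step, from the inversion (\ref{pinv}), and from the Appendix \ref{ap:nc} integral, and use the support conditions $p_2=p'_3-\lambda_{2,3}$, $p_3=p'_2$ (before they are absorbed into the surviving delta function) to reduce every quadratic-in-$p$ contribution to the linear combination $\phi_p$ of (\ref{ppff}); the parameter-only residue assembles into $\varrho_p$. This phase accounting is the main obstacle, as four distinct quadratic sources must conspire to cancel and a single sign slip in BCH or in the Fourier identity would propagate; substituting the support conditions \emph{before} attempting the cancellation is the decisive simplification.
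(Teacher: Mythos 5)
Your overall strategy matches the paper's: peel off the two momentum-diagonal dilogarithms, insert coordinate resolutions in the third slot to turn the two $\hat{x}_3$-dependent dilogarithms into $c$-number functions, compute the kernel of $\mathscr{P}_{123}$ via BCH, and finish with Fourier integrals of $\Phi_\bb$ from Appendix \ref{ap:nc}. However, the central claim --- that everything collapses to \emph{a single} integral $\int dx\,\e^{2\pi iAx}\,\Phi_\bb(x+a)/\Phi_\bb(x+b)$ evaluated by the full Ramanujan-type sum --- is structurally wrong. The factors $\Phi_\bb(\hat{x}_3-\hat{p}_1)$ and $\Phi_\bb(\hat{p}_1-\hat{x}_3+\lambda_{1,3})^{-1}$ sit on opposite sides of $\mathscr{P}_{123}$, and the transposition $\rho_{23}$ inside $\mathscr{P}_{123}$ routes the left coordinate insertion $x_3$ to pair with $p'_2$ and the right insertion $x'_3$ to pair with $p_2$: one finds
$\langle p_1,p_2,x_3|\mathscr{P}_{123}|p'_1,p'_2,x'_3\rangle=\e^{2\pi i(\lambda_{2,3}(p'_1-x'_3)+x_3p'_2-x'_3p_2)}\,\e^{-\pi i(p_1-p'_1)(p_1+p'_1)}\,\delta(p_1+p_2-p'_1-p'_2)$,
with no delta function identifying $x_3$ and $x'_3$ and no cross term between them. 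Consequently the double integral \emph{factorizes} into two independent Fourier transforms, each of a single $\Phi_\bb^{\pm 1}$, which are evaluated by the degenerate limits (\ref{ram1})--(\ref{ram2}), not by (\ref{rama1}). Note also that (\ref{rama1}) applied to a genuine ratio would produce \emph{three} $\Phi_\bb$ factors, one depending only on $a-b$, which is incompatible with the two-denominator structure of (\ref{pppp}); this alone signals that the single-integral reduction cannot be correct.

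A second genuine problem is the phase bookkeeping. The support conditions $p_2=p'_3-\lambda_{2,3}$ and $p_3=p'_2$ that you extract from the pure momentum-basis kernel of $\mathscr{P}_{123}$ do \emph{not} hold for the full kernel: those two delta functions are smeared out once the non-diagonal dilogarithms are convolved in, and only $\delta(p_1+p_2-p'_1-p'_2)$ survives in (\ref{pppp}). Substituting them into phases depending on the external momenta would change the answer off the true support (for instance the term $p_1(p'_3-p_3)$ in $\phi_p$ cannot be rewritten using $p_3=p'_2$). The only legitimate substitutions of this kind concern intermediate integration variables that are genuinely fixed by delta functions inside the insertions. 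With the two-integral structure in place and the phases tracked without invoking those spurious constraints, the remainder of your plan does go through and reproduces (\ref{pppp})--(\ref{ppff}).
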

 
\begin{proof}
In what follows,  $p_j, p'_j, k_3$ will be used for 
labels of momentum states, while $x_3, x'_3$ correspond to coordinate states.
From (\ref{RRf}) we get
\begin{align}
&\langle p_1, p_2, p_3 | \mathscr{R}_{123} |p'_1,p'_2,p'_3\rangle 
=
\frac{\Phi_\bb(p_3-p_2+\lambda_3)}{\Phi_\bb(p'_2-p'_3+\lambda_2)}\mathscr{R}_0,
\label{haji}\\
&\mathscr{R}_0 = 
\langle p_1, p_2, p_3 |
\Phi_\bb(\hat{x}_3-\hat{p}_1) \,\mathscr{P}_{123}\,
\Phi_\bb(\hat{p}_1-\hat{x}_3+\lambda_{1,3})^{-1}
 |p'_1,p'_2,p'_3\rangle.
 \end{align}
By inserting $1 = \int |x_3\rangle \langle x_3|dx_3 = \int |x'_3\rangle \langle x'_3|dx'_3$ in the third component, 
$\mathscr{R}_0$ is rewritten as 
\begin{align*}
&\mathscr{R}_0 = \int  dx_3dx'_3 \e^{2 \pi i(x'_3p'_3-x_3p_3)}
\frac{\Phi_\bb(x_3-p_1)}{\Phi_\bb(p'_1-x'_3+\lambda_{1,3})}
\langle p_1, p_2, x_3 | \mathscr{P}_{123} |p'_1, p'_2, x'_3\rangle,
\\
&\langle p_1, p_2, x_3 | \mathscr{P}_{123} |p'_1, p'_2, x'_3\rangle
=  \e^{2 \pi i \lambda_{2,3}(p'_1-x'_3)}
\langle p_1, p_2, x_3 | \e^{2\pi i(\hat{x}_1-\hat{p}_1)(\hat{p}_3-\hat{p}_2)}  |p'_1, x'_3, p'_2\rangle
\\
&
= \e^{2 \pi i \lambda_{2,3}(p'_1-x'_3)}\int dk_3 \,\e^{2\pi i x_3k_3}\langle p_1, p_2, k_3 | 
\e^{2\pi i(\hat{x}_1-\hat{p}_1)(k_3-p_2)}  |p'_1, x'_3, p'_2\rangle
\\
&= \e^{2 \pi i \lambda_{2,3}(p'_1-x'_3)}\int dk_3 \,\e^{2\pi i (x_3k_3-x'_3p_2)}\delta(k_3-p_2')
\langle p_1| 
\e^{2\pi i(\hat{x}_1-\hat{p}_1)(k_3-p_2)}  |p'_1\rangle,
\\
&= \e^{2 \pi i (\lambda_{2,3}(p'_1-x'_3) + x_3p'_2 - x'_3p_2)}
\langle p_1| 
\e^{2\pi i(\hat{x}_1-\hat{p}_1)(p'_2-p_2)}  |p'_1\rangle.
\displaybreak[0]
\end{align*}
In the intermediate step, we have once again utilized the insertion of 
$1 = \int |k_3\rangle \langle k_3|dk_3$.
From 
$\e^{2\pi i \alpha \hat{x}_1}\e^{-2\pi i \alpha \hat{p}_1} 
= \e^{\pi i \alpha^2} \e^{2\pi i\alpha(\hat{x}_1-\hat{p}_1)}$ and (\ref{qm}), the bracket is evaluated as
\begin{align*}
\langle p_1| \e^{2\pi i(\hat{x}_1-\hat{p}_1)(p'_2-p_2)} |p'_1\rangle
&= \e^{-\pi i (p'_2-p_2)^2}\langle p_1 | \e^{2\pi i (p'_2-p_2)\hat{x}_1} 
\e^{-2\pi i (p'_2-p_2)\hat{p}_1}|p'_1\rangle
\\
 &= \e^{-\pi i (p'_2-p_2)^2-2\pi i (p'_2-p_2)p'_1}\langle p_1-p'_2+p_2|p'_1\rangle
 \\
 &=  \e^{-\pi i (p_1-p'_1)(p_1+p'_1)}\delta(p_1+p_2-p'_1-p'_2).
\end{align*}
Thus $\mathscr{R}_0$ is factorized into the two independent integrals as
\begin{equation}
\begin{split}
\mathscr{R}_0 &= \delta(p_1+p_2-p'_1-p'_2) 
\e^{-\pi i (p_1-p'_1)(p_1+p'_1)+2 \pi i \lambda_{2,3}p'_1}
\\
&\times \left(\int dx_3 \e^{2\pi ix_3(p'_2-p_3)}\Phi_\bb(x_3-p_1)\right)
\left(  \int dx'_3 \frac{\e^{2\pi i x'_3(p'_3-p_2-\lambda_{2,3})}}{\Phi_\bb(p'_1-x'_3+\lambda_{1,3})}
\right).
\end{split}
\end{equation}
Evaluating them by (\ref{ram1}),  (\ref{ram2}), substituting the resulting $\mathscr{R}_0$ into (\ref{haji}) and 
applying (\ref{pinv}), we obtain (\ref{pppp}) with (\ref{ppff}).
\end{proof}

The integral kernel 
for $\mathscr{R}$ 
in the ``coordinate" ($\hat{x}$-diagonal) representation can be derived similarly, 
or by taking the Fourier transformation of (\ref{pppp}) as
\begin{equation}
\begin{split}
&\langle x_1, x_2, x_3 | \mathscr{R}_{123} |x'_1, x'_2, x'_3\rangle 
\\
&= \int dp_1dp_2dp_3dp'_1dp'_2dp'_3 
\e^{2\pi i(x_1p_1+x_2p_2+x_3p_3-x'_1p'_1-x'_2p'_2-x'_3p'_3)}
\langle p_1, p_2, p_3 | \mathscr{R} _{123}|p'_1, p'_2, p'_3\rangle.
\end{split}
\end{equation}
The outcome, again attributed to \cite{S10}, is given by 
\begin{align}
&\langle x_1, x_2, x_3 | \mathscr{R}_{123} |x'_1, x'_2, x'_3\rangle 
= \varrho_x \delta(x_2+x_3-x'_2-x'_3) \e^{2\pi i \phi_x}
\frac{\Phi_\bb(x_1-x'_2+i\eta)\Phi_\bb(x'_1-x_2+\lambda_{1,2}+i\eta)}
{\Phi_\bb(x_1-x_2+\lambda_1)\Phi_\bb(x'_1-x'_2-\lambda_2)},
\label{xxxx}
\\
&\phi_x = (x_1-x'_1)x_3+(x_2-x'_1)(\lambda_3+i\eta) + (x'_2-x_2)\lambda_2,
\quad
\varrho_x = \e^{2\pi i(\lambda_1\lambda_2-\lambda_1\lambda_3+\lambda_2\lambda_3+\eta^2)}\varrho_p.
\end{align}

When $q$ is specialized to a root of unity, the integral kernel (\ref{xxxx}) 
essentially reduces to the matrix elements of the finite dimensional R-matrix 
in \cite[eq.(3.4)]{SMS96}.
It is known to be connected to the so-called Zamolodchikov-Bazhanov-Baxter model \cite{BB92}
via the vertex-IRC (Interaction Round a Cube) 
duality transformation \cite[Chap. 3.3]{S10}.

\section{Conclusion}\label{s:con}
We have studied the cluster transformation, denoted as  $\hat{R}=\hat{R}_{123}$, in 
the quantum cluster algebra associated with the square quiver as depicted in Figures 
\ref{fig:mus} and \ref{fig:R123}. 
By employing the realization of the quantum Y-variables using $q$-Weyl algebras as in (\ref{Yw})--(\ref{Ypw}), 
we have extracted an operator $R=R_{123}$ such that $\hat{R}=\mathrm{Ad}(R)$.
It satisfies the tetrahedron equation involving the spectral parameters as 
established in Theorem \ref{th:main}.
We have discovered that the formula (\ref{RL1}) for the $R$ precisely reproduces the known $R$-matrix in \cite{S99},   
the origins of which had remained shrouded in mystery for many years.
Additionally, we have presented a detailed derivation of the matrix elements of $R$.
The result of our findings offers significant insight into understanding the connection between 3D integrability and the theory of quantum cluster algebras.

The tetrahedron equation was introduced as a natural 3D version of the
Yang-Baxter equation as explained in Introduction. Geometrically it should
be interpreted as a 3D version of braids and knots, 2-braids and surface
knots. It will be interesting to find applications of the result in the
present paper to the geometry of such objects  (cf. \cite{CS96}), and to quantum systems  (cf. \cite{KFKS15,PGAG22}).


\appendix

\section{Formulas of $\widehat{R}_{123}$ for general sign 
$(\varepsilon_1,  \varepsilon_2,  \varepsilon_3,  \varepsilon_4)$}\label{app:RY}

The formulas for the cluster transformation $\widehat{R}$ in (\ref{rhat}), specifically involving the monomial part 
$\tau_{\varepsilon_1,  \varepsilon_2,  \varepsilon_3,  \varepsilon_4}$ on the right are given as follows:
\begin{align*}
\widehat{R}&= \mathrm{Ad}\left(\Psi_q(Y_8)\Psi_q(Y_5)\Psi_q(q^{-1}Y_4Y_8)\Psi_q(Y_4)\right)\tau_{++++}
\\
&= \mathrm{Ad}\left(\Psi_q(Y_8)\Psi_q(Y_5)\Psi_q(q^{-1}Y_4Y_8)\Psi_q(Y^{-1}_4)^{-1}\right)\tau_{+++-} 
\\
&=\mathrm{Ad}\left(\Psi_q(Y_8)\Psi_q(Y_5)\Psi_q(q^{-1}Y^{-1}_4Y^{-1}_8)^{-1}\Psi_q(Y^{-1}_8)\right)\tau_{++-+}
\displaybreak[0] 
\\
&=\mathrm{Ad}\left(\Psi_q(Y_8)\Psi_q(Y_5)\Psi_q(q^{-1}Y^{-1}_4Y^{-1}_8)^{-1}\Psi_q(Y_8)^{-1}\right)\tau_{++--} 
\\
&=\mathrm{Ad}\left(\Psi_q(Y_8)\Psi_q(Y^{-1}_5)^{-1}\Psi_q(q^{-1}Y_4Y_8)\Psi_q(qY_4Y_5)\right)\tau_{+-++} 
\\
&=\mathrm{Ad}\left(\Psi_q(Y_8)\Psi_q(Y^{-1}_5)^{-1}\Psi_q(q^{-1}Y_4Y_8)\Psi_q(qY^{-1}_4Y^{-1}_5)^{-1}\right)\tau_{+-+-} 
\displaybreak[0]
\\
&=\mathrm{Ad}\left(\Psi_q(Y_8)\Psi_q(Y^{-1}_5)^{-1}\Psi_q(q^{-1}Y^{-1}_4Y^{-1}_8)^{-1}
\Psi_q(q^{-1}Y_5Y^{-1}_8)\right)\tau_{+--+} 
\\
&=\mathrm{Ad}\left(\Psi_q(Y_8)\Psi_q(Y^{-1}_5)^{-1}\Psi_q(q^{-1}Y^{-1}_4Y^{-1}_8)^{-1}
\Psi_q(q^{-1}Y^{-1}_5Y_8)^{-1}\right)\tau_{+---} 
\displaybreak[0]
\\
&=\mathrm{Ad}\left(\Psi_q(Y^{-1}_8)^{-1}\Psi_q(qY_5Y_8)\Psi_q(Y_4)\Psi_q(qY_4Y^{-1}_8)\right)\tau_{-+++} 
\\
&=\mathrm{Ad}\left(\Psi_q(Y^{-1}_8)^{-1}\Psi_q(qY_5Y_8)\Psi_q(Y_4)\Psi_q(qY^{-1}_4Y_8)^{-1}\right)\tau_{-++-} 
\\
&=\mathrm{Ad}\left(\Psi_q(Y^{-1}_8)^{-1}\Psi_q(qY_5Y_8)\Psi_q(Y^{-1}_4)^{-1}
\Psi_q(Y^{-1}_8)\right)\tau_{-+-+} 
\displaybreak[0]
\\
&=\mathrm{Ad}\left(\Psi_q(Y^{-1}_8)^{-1}\Psi_q(qY_5Y_8)\Psi_q(Y^{-1}_4)^{-1}
\Psi_q(Y_8)^{-1}\right)\tau_{-+--} 
\\
&=\mathrm{Ad}\left(\Psi_q(Y^{-1}_8)^{-1}\Psi_q(qY^{-1}_5Y^{-1}_8)\Psi_q(Y_4)
\Psi_q(qY_4Y_5)\right)\tau_{--++} 
\\
&=\mathrm{Ad}\left(\Psi_q(Y^{-1}_8)^{-1}\Psi_q(qY^{-1}_5Y^{-1}_8)\Psi_q(Y_4)
\Psi_q(qY^{-1}_4Y^{-1}_5)^{-1}\right)\tau_{--+-} 
\displaybreak[0]
\\
&=\mathrm{Ad}\left(\Psi_q(Y^{-1}_8)^{-1}\Psi_q(qY^{-1}_5Y^{-1}_8)\Psi_q(Y^{-1}_4)^{-1}
\Psi_q(Y_5)\right)\tau_{---+} 
\\
&=\mathrm{Ad}\left(\Psi_q(Y^{-1}_8)^{-1}\Psi_q(qY^{-1}_5Y^{-1}_8)\Psi_q(Y^{-1}_4)^{-1}
\Psi_q(Y^{-1}_5)^{-1}\right)\tau_{----}.
\displaybreak[0]
\end{align*}
The formulas containing the monomial part 
$\tau_{\varepsilon_1,  \varepsilon_2,  \varepsilon_3,  \varepsilon_4}$ in the center are given as follows:
\begin{align*}
\widehat{R}
&=\mathrm{Ad}\left(\Psi_q(Y_8)\Psi_q(Y_5)\right)
\tau_{++++} \mathrm{Ad}\left(\Psi_q(qY'^{-1}_5Y'^{-1}_8)\Psi_q(Y'^{-1}_8)\right)
\\
&=\mathrm{Ad}\left(\Psi_q(Y_8)\Psi_q(Y_5)\right)
\tau_{+++-} \mathrm{Ad}\left(\Psi_q(Y'^{-1}_5)\Psi_q(Y'_8)^{-1}\right)
\\
&=\mathrm{Ad}\left(\Psi_q(Y_8)\Psi_q(Y_5)\right)
\tau_{++-+} \mathrm{Ad}\left(\Psi_q(qY'_5Y'_8)^{-1}\Psi_q(Y'^{-1}_8)\right)
\\
&=\mathrm{Ad}\left(\Psi_q(Y_8)\Psi_q(Y_5)\right)
\tau_{++--} \mathrm{Ad}\left(\Psi_q(Y'_5)^{-1}\Psi_q(Y'_8)^{-1}\right)
\\
&=\mathrm{Ad}\left(\Psi_q(Y_8)\Psi_q(Y^{-1}_5)^{-1}\right)
\tau_{+-++} \mathrm{Ad}\left(\Psi_q(qY'^{-1}_5Y'^{-1}_8)\Psi_q(Y'^{-1}_8)\right)
\\
&=\mathrm{Ad}\left(\Psi_q(Y_8)\Psi_q(Y^{-1}_5)^{-1}\right)
\tau_{+-+-} \mathrm{Ad}\left(\Psi_q(Y'^{-1}_5)\Psi_q(Y'_8)^{-1}\right)
\\
&=\mathrm{Ad}\left(\Psi_q(Y_8)\Psi_q(Y^{-1}_5)^{-1}\right)
\tau_{+--+} \mathrm{Ad}\left(\Psi_q(qY'_5Y'_8)^{-1}\Psi_q(Y'^{-1}_8)\right)
\\
&=\mathrm{Ad}\left(\Psi_q(Y_8)\Psi_q(Y^{-1}_5)^{-1}\right)
\tau_{+---} \mathrm{Ad}\left(\Psi_q(Y'_5)^{-1}\Psi_q(Y'_8)^{-1}\right)
\\
&=\mathrm{Ad}\left(\Psi_q(Y^{-1}_8)^{-1}\Psi_q(qY_5Y_8)\right)
\tau_{-+++} \mathrm{Ad}\left(\Psi_q(qY'^{-1}_5Y'^{-1}_8)\Psi_q(Y'^{-1}_8)\right)
\\
&=\mathrm{Ad}\left(\Psi_q(Y^{-1}_8)^{-1}\Psi_q(qY_5Y_8)\right)
\tau_{-++-} \mathrm{Ad}\left(\Psi_q(Y'^{-1}_5)\Psi_q(Y'_8)^{-1}\right)
\\
&=\mathrm{Ad}\left(\Psi_q(Y^{-1}_8)^{-1}\Psi_q(qY_5Y_8)\right)
\tau_{-+-+} \mathrm{Ad}\left(\Psi_q(qY'_5Y'_8)^{-1}\Psi_q(Y'^{-1}_8)\right)
\\
&=\mathrm{Ad}\left(\Psi_q(Y^{-1}_8)^{-1}\Psi_q(qY_5Y_8)\right)
\tau_{-+--} \mathrm{Ad}\left(\Psi_q(Y'_5)^{-1}\Psi_q(Y'_8)^{-1}\right)
\\
&=\mathrm{Ad}\left(\Psi_q(Y^{-1}_8)^{-1}\Psi_q(qY^{-1}_5Y^{-1}_8)^{-1}\right)
\tau_{--++} \mathrm{Ad}\left(\Psi_q(qY'^{-1}_5Y'^{-1}_8)\Psi_q(Y'^{-1}_8)\right)
\\
&=\mathrm{Ad}\left(\Psi_q(Y^{-1}_8)^{-1}\Psi_q(qY^{-1}_5Y^{-1}_8)^{-1}\right)
\tau_{--+-} \mathrm{Ad}\left(\Psi_q(Y'^{-1}_5)\Psi_q(Y'_8)^{-1}\right)
\\
&=\mathrm{Ad}\left(\Psi_q(Y^{-1}_8)^{-1}\Psi_q(qY^{-1}_5Y^{-1}_8)^{-1}\right)
\tau_{---+} \mathrm{Ad}\left(\Psi_q(qY'_5Y'_8)^{-1}\Psi_q(Y'^{-1}_8)\right)
\\
&=\mathrm{Ad}\left(\Psi_q(Y^{-1}_8)^{-1}\Psi_q(qY^{-1}_5Y^{-1}_8)^{-1}\right)
\tau_{----} \mathrm{Ad}\left(\Psi_q(Y'_5)^{-1}\Psi_q(Y'_8)^{-1}\right).
\displaybreak[0]
\end{align*}

\section{Supplement to Section \ref{ss:ms}}\label{ap:sup}

The two sides of the inhomogeneous tetrahedron equation (\ref{ihte}) yield the 
following monomial transformation:
\begin{equation}\begin{split}\label{y21}
Y^{(21)}_1 &\mapsto Y_1,  \; 
 \;  Y^{(21)}_5  \mapsto q^{-2}Y^{-1}_9Y^{-1}_{10}Y^{-1}_{15},  
 \;  Y^{(21)}_9  \mapsto Y^{-1}_4,  
 \; Y^{(21)}_{13}  \mapsto Y_{13},
\\
Y^{(21)}_2 &\mapsto Y_2Y_4Y_5Y_8Y_9, 
 \; Y^{(21)}_6 \mapsto Y_6, 
 \; Y^{(21)}_{10} \mapsto qY^{-1}_8 Y^{-1}_9, 
 \; Y^{(21)}_{14} \mapsto q^{-1}Y_9Y_{10},
\\
Y^{(21)}_3 &\mapsto  q^{-4}Y_3Y_4Y_9Y_{10}Y_{15}, 
 \; Y^{(21)}_7 \mapsto q^{-2}Y_7Y_8Y_{14}, 
 \; Y^{(21)}_{11} \mapsto Y^{-1}_{14}, 
 \; Y^{(21)}_{15}  \mapsto Y^{-1}_{10},
\\
Y^{(21)}_{4}  &\mapsto qY_{10}Y_{11}, 
 \; Y^{(21)}_{8}  \mapsto qY^{-1}_5Y^{-1}_9Y^{-1}_{10}Y^{-1}_{11}, 
 \; Y^{(21)}_{12}  \mapsto Y_{12},   
 \; Y^{(21)}_{16}  \mapsto  q^4Y_5Y_9Y_{10}Y_{11}Y_{14}Y_{15}Y_{16}.
\end{split}
\displaybreak[0]
\end{equation}

Let us describe the monomial parts $\tau_{+-++}$ and  $\tau_{--+-}$ in (\ref{taue}) mentioned
in Proposition \ref{pr:mono}. We also give their inverse.

$\tau_{+-++}$ and $\tau_{+-++}^{-1}$ are given as follows: 
\begin{align}
&\tau_{+-++}: \left\{ ~~
\begin{alignedat}{3}
Y'_1 & \mapsto Y_1,\quad &
Y'_4 & \mapsto Y^{-1}_5, \quad &
Y'_7 & \mapsto Y_7,
\\
Y'_2 & \mapsto Y_2Y_4Y_5,\quad & 
Y'_5 & \mapsto q^{-1}Y_5Y^{-1}_8,\quad  &
Y'_8 &  \mapsto qY^{-1}_4Y^{-1}_5,
\\
Y'_3 &\mapsto Y_4 Y_3Y_8,\quad &
Y'_6 & \mapsto q^{-1}Y_5Y_6,\quad &
Y'_9 &\mapsto qY_8Y_9,
\end{alignedat}
\right.
\displaybreak[0]
\\
&\tau_{+-++}^{-1}: \left\{ ~~
\begin{alignedat}{3}
Y_1 & \mapsto Y'_1, \quad &
Y_4 & \mapsto q Y'_4Y'^{-1}_8, \quad &
Y_7 & \mapsto Y'_7,
\\
Y_2 & \mapsto q Y'_2Y'_8, \quad & 
Y_5 & \mapsto Y'^{-1}_4, \quad & 
Y_8 & \mapsto qY'^{-1}_4Y'^{-1}_5,
\\
Y_3 & \mapsto Y'_5Y'_3Y'_8, \quad &
Y_6 & \mapsto qY'_4Y'_6, \quad &
Y_9 & \mapsto Y'_4Y'_5Y'_9.
\end{alignedat}
\right.
\end{align}
Note that the image is not necessarily sign coherent\footnote{The sign coherence holds 
under the tropical exchange relation as stated in Theorem \ref{thm:sign-coherence}.
It corresponds to $\tau_{k,\varepsilon}$ with $\varepsilon= (\text{tropical sign})$,
which is not necessarily valid in the present setting of prescribing the signs.}.

$\tau_{--+-}$ and $\tau_{--+-}^{-1}$ are given as follows: 
\begin{align}
&\tau_{--+-}: \left\{ ~~
\begin{alignedat}{3}
Y'_1 & \mapsto Y_4Y_1Y_5,\quad &
Y'_4 & \mapsto qY_4Y^{-1}_8, \quad &
Y'_7 & \mapsto qY_7Y_8,
\\
Y'_2 & \mapsto Y_2,\quad & 
Y'_5 & \mapsto  Y^{-1}_4,\quad  &
Y'_8 &  \mapsto qY^{-1}_4Y^{-1}_5,
\\
Y'_3 &\mapsto q^{-1}Y_3 Y_4,\quad &
Y'_6 & \mapsto Y_5Y_6Y_8,\quad &
Y'_9 &\mapsto Y_9,
\end{alignedat}
\right.
\\
&\tau_{--+-}^{-1}: \left\{ ~~
\begin{alignedat}{3}
Y_1 & \mapsto q^{-1}Y'_1Y'_8, \quad &
Y_4 & \mapsto  Y'^{-1}_5, \quad &
Y_7 & \mapsto Y'_5Y'_4Y'_7,
\\
Y_2 & \mapsto Y'_2, \quad & 
Y_5 & \mapsto q^{-1}Y'_5Y'^{-1}_8, \quad & 
Y_8 & \mapsto qY'^{-1}_4Y'^{-1}_5,
\\
Y_3 & \mapsto qY'_3Y'_5, \quad &
Y_6 & \mapsto Y'_4Y'_6Y'_8, \quad &
Y_9 & \mapsto Y'_9.
\end{alignedat}
\right.
\end{align}

\section{$\tau^{uw}_{\ve_1,\ve_2,\ve_3,\ve_4}$ 
in (\ref{Tcom}) and $R_{\ve_1,\ve_2,\ve_3,\ve_4}$ in (\ref{Ree}) for $(\ve_1,\ve_2,\ve_3,\ve_4)$ in 
 Table \ref{tab:adt}}\label{app:ruw}

We include the case $\tau^{uw}_{-,-,+,+}$ previously addressed in (\ref{uwL1}) to facilitate comparison.
\begin{align}
\tau^{uw}_{----}:&\; 
\begin{cases}
u_1 \mapsto u_2-\lambda_1+\lambda_2, 
\quad  \qquad \qquad \qquad \quad
w_1  \mapsto w_1+ w_2-w_3,
\\
u_2  \mapsto u_2+u_3-w_1-\lambda_1, 
\qquad \qquad \quad\; \;
w_2  \mapsto w_3,
\\
u_3  \mapsto w_1+\lambda_1, 
\qquad \qquad \quad  \qquad \qquad\quad \,
w_3  \mapsto -u_1+u_2+w_3-\lambda_1+\lambda_2,
\end{cases}
\\
\tau^{uw}_{--++}:&\; 
\begin{cases}
u_1 \mapsto u_1+w_2-w_3+\lambda_2-\lambda_3, 
\quad \qquad\;
w_1  \mapsto w_1+ w_2-w_3,
\\
u_2  \mapsto u_1+u_3-w_1, 
\qquad \qquad \qquad  \qquad
w_2  \mapsto w_3,
\\
u_3  \mapsto -u_1+u_2+w_1, 
\qquad \qquad \quad  \qquad\;
w_3  \mapsto w_2+\lambda_2-\lambda_3,
\end{cases}
\\
\tau^{uw}_{-+-+}: &\; 
\begin{cases}
u_1 \mapsto u_2+w_2-w_3-\lambda_1+\lambda_2-\lambda_3, 
\quad  \;
w_1  \mapsto u_3+\lambda_3,
\\
u_2  \mapsto u_2+u_3-w_1-\lambda_1, 
\qquad \qquad \qquad 
w_2  \mapsto -u_3+w_1+w_2-\lambda_3,
\\
u_3  \mapsto w_1+\lambda_1, 
\qquad \qquad \quad  \qquad \qquad \quad\; \,
w_3  \mapsto -u_1+u_2+w_2-\lambda_1+\lambda_2-\lambda_3,
\end{cases}
\\
\tau^{uw}_{+-+-}: &\; 
\begin{cases}
u_1 \mapsto u_2-\lambda_1+\lambda_2, 
\quad  \qquad \qquad \qquad \quad\; 
w_1  \mapsto u_3+w_2-w_3,
\\
u_2  \mapsto u_1, 
\qquad \qquad \qquad  \qquad \qquad \qquad\;\;\;
w_2  \mapsto -u_3+w_1+w_3,
\\
u_3  \mapsto -u_1+u_2+u_3, 
\qquad \qquad \quad  \qquad\;\;
w_3  \mapsto -u_1+u_2+w_3-\lambda_1+\lambda_2,
\end{cases}
\\
\tau^{uw}_{++--}: &\; 
\begin{cases}
u_1 \mapsto u_2+w_2-w_3-\lambda_1+\lambda_2-\lambda_3, 
\quad \;
w_1  \mapsto u_3+\lambda_3,
\\
u_2  \mapsto u_2+u_3-w_1-\lambda_1, 
\qquad \qquad \qquad 
w_2  \mapsto  -u_3+w_1+w_2-\lambda_3,
\\
u_3  \mapsto w_1+\lambda_1, 
\qquad \qquad \quad  \qquad \qquad \quad \; \; 
w_3  \mapsto -u_1+u_2+w_2-\lambda_1+\lambda_2-\lambda_3,
\end{cases}
\\
\tau^{uw}_{++++}: &\; 
\begin{cases}
u_1 \mapsto u_1+w_2-w_3+\lambda_2-\lambda_3, 
\quad  \qquad\;\;
w_1  \mapsto u_3+\lambda_3,
\\
u_2  \mapsto u_1, 
\qquad \qquad \qquad  \qquad \qquad \qquad \;\;\;
w_2  \mapsto  -u_3+w_1+w_2-\lambda_3,
\\
u_3  \mapsto -u_1+u_2+u_3, 
\qquad \qquad \quad  \qquad\;\;
w_3  \mapsto  w_2+\lambda_2-\lambda_3.
\end{cases}
\displaybreak[0]
\end{align}

Now we proceed to $R_{\ve_1,\ve_2,\ve_3,\ve_4}$.
We include the case $(-,-,+,+)$ in (\ref{Rf1}) for comparison.
\begin{equation}\label{cdk}
\begin{split}
(-,-,-,-):\; 
&\Psi_q(\e^{u_3-w_1})^{-1}\Psi_q(\e^{u_3-w_1-w_2+w_3+\lambda_3})^{-1}P_{----}
\Psi_q(\e^{w_3-w_2-\lambda_2})^{-1}\Psi_q(\e^{u_3-w_1-\lambda_1+\lambda_3})^{-1},
\\
(-,-,+,+):\; 
& \Psi_q(\e^{w_3-w_2+\lambda_3})^{-1}
\Psi_q(\e^{u_3-w_1})^{-1}
P_{--++} \Psi_q(\e^{w_1-u_3+\lambda_1-\lambda_3})
\Psi_q(\e^{w_2-w_3+\lambda_2}),
\\
(-,+,-,+):\; 
&\Psi_q(\e^{u_3-w_1})^{-1}\Psi_q(\e^{w_1+w_2-w_3-u_3-\lambda_3})P_{-+-+}
\Psi_q(\e^{u_3-w_1-w_2+w_3-\lambda_1-\lambda_2+\lambda_3})^{-1}
\Psi_q(\e^{w_1-u_3+\lambda_1-\lambda_3}),
\\
(+,-,+,-):\;
& \Psi_q(\e^{w_1-u_3})\Psi_q(\e^{w_3-w_2+\lambda_3})^{-1}P_{+-+-}
\Psi_q(\e^{w_2-w_3+\lambda_2})
\Psi_q(\e^{u_3-w_1-\lambda_1+\lambda_3})^{-1},
\\
(+,+,-,-):\;
& \Psi_q(\e^{w_1-u_3})\Psi_q(\e^{w_2-w_3-\lambda_3})P_{++--}
\Psi_q(\e^{w_3-w_2-\lambda_2})^{-1}
\Psi_q(\e^{u_3-w_1-\lambda_1+\lambda_3})^{-1},
\\
(+,+,+,+):\;
& \Psi_q(\e^{w_1-u_3})\Psi_q(\e^{w_2-w_3-\lambda_3})P_{++++}
\Psi_q(\e^{w_1+w_2-w_3-u_3+\lambda_1+\lambda_2-\lambda_3})
\Psi_q(\e^{w_1-u_3+\lambda_1-\lambda_3}).
\end{split}
\end{equation}

\section{Integral formula involving non-compact quantum dilogarithm}\label{ap:nc}

The following is known as a modular double analogue of the Ramanujan ${}_1\Psi_1$-sum 
\begin{align}
\int dt \frac{\Phi_\bb(t+u)}{\Phi_\bb(t+v)} \e^{2\pi i wx}
&= \frac{\Phi_\bb(u-v-i\eta)\Phi_\bb(w+i\eta)}{K\Phi_\bb(u-v+w-i\eta)}
\e^{-2\pi i w(v+i\eta)}
\label{rama1}\\
&= \frac{K\Phi_\bb(v-u-w+i\eta)}{\Phi_\bb(v-u+i\eta)\Phi_\bb(-w-i\eta)}
\e^{-2\pi i w(u-i\eta)},
\label{rama2}
\end{align}
where $K = \e^{-i\pi(4\eta^2+1)/12}$.
See \cite[eq.(B.73)]{S10} for the condition concerning the validity of the integrals.
From $\Phi_\bb(u)\vert_{u \rightarrow -\infty} \rightarrow 1$,
their limit $u,v \rightarrow -\infty$ reduces to
\begin{align}
\int dt \frac{\e^{2\pi i w t}}{\Phi_\bb(t+v)} &= \frac{\Phi_\bb(w+i\eta)}{K}\e^{-2\pi iw(v+i\eta)},
\label{ram1}
\\
\int dt \Phi_\bb(t+u)\e^{2\pi i w t}
&= \frac{K}{\Phi_\bb(-w-i\eta)} \e^{-2\pi i w(u-i\eta)}.
\label{ram2}
\end{align}

\bigskip

\section*{Acknowledgments}
The authors thank Sergey Sergeev for useful communication, 
Junya Yagi and Akihito Yoneyama for stimulating discussions.
RI  is supported by  JSPS KAKENHI Grant Number 19K03440 and 23K03048.
YT is supported by  JSPS KAKENHI Grant Number JP21K03240 and 22H01117.
AK thanks the organizers of 
{\em Representation Theory, Integrable Systems, and Related Topics} 
at the Beijing Institute of Mathematical Sciences and Applications (July 31 - August 4, 2023) 
for their generous invitation, and Andrey Marshakov, Eric Ragoucy, Nicolai Reshetikhin, 
Bart Vlaar, and Da-jun Zhang for their kind interest.
He also thanks Murray Batchelor, Rodney Baxter, Vladimir Bazhanov, 
Vladimir Mangazeev, and Sergey Sergeev for their warm hospitality 
during his stay at Australian National University (March 21 - April 7, 2023), 
where a part of this work was conducted. 
\vspace{0.5cm}


\end{document}